\newcommand{\bbC}{\mathbb{C}}
\newcommand{\bbH}{\mathbb{H}}
\newcommand{\bbN}{\mathbb{N}}
\newcommand{\bbR}{\mathbb{R}}
\newcommand{\gve}{\varepsilon}
\newcommand{\ga}{\alpha}
\newcommand{\gb}{\beta}
\newcommand{\gd}{\delta}
\newcommand{\gk}{\kappa}
\newcommand{\gl}{\lambda}
\newcommand{\glt}{\widetilde{\lambda}}
\newcommand{\go}{\omega}
\newcommand{\gr}{\gamma}
\newcommand{\gs}{\sigma}
\newcommand{\gD}{\Delta}
\newcommand{\gR}{\Gamma}
\newcommand{\gO}{\Omega}
\renewcommand{\Re}{\text{Re}\,}
\renewcommand{\Im}{\text{Im}\,}
\newcommand{\diam}{\text{diam}}
\newcommand{\half}{\frac{1}{2}}
\newcommand{\Lip}{\text{Lip}}
\newcommand{\LipHalfNorm}{\mbox{$\Vert \gl \Vert_{\text{Lip}(\frac{1}{2})}$}}
\newcommand{\LipHalfNormt}{\mbox{$\Vert \widetilde{\gl} \Vert_{\text{Lip}(\frac{1}{2})}$}}
\newcommand{\MTdeltaCondition}{$(M,T,\delta)$-Lip($\frac{1}{2}+\delta$) condition}
\newcommand{\sigmaCondition}{$\sigma$-Lip($\frac{1}{2}$) condition}
\newcommand{\MTnAlphaCondition}{$(M,T,n,\alpha)$-$C^{n,\alpha}$ condition}
\newcommand{\MToneDeltaCondition}{$(M,T,1,\delta)$-$C^{1,\delta}$ condition}
\newtheorem{theorem}{Theorem}[section]
\newtheorem{lemma}[theorem]{Lemma}
\newtheorem{corollary}[theorem]{Corollary}
\newtheorem{proposition}[theorem]{Proposition}
\newtheorem*{main}{Main Result}
\newenvironment*{refThm}[1]{ 
  \def\thethmid{#1} 
  \begin{the-refThm} 
}{ 
  \end{the-refThm} 
} 
\newtheorem*{the-refThm}{Theorem \thethmid} 
\theoremstyle{definition}
\newtheorem*{definition}{Definition}
\newtheorem*{example}{Example}
\newtheorem*{fact}{Fact}
\newtheorem*{notation}{Notation}
\newtheorem*{gnotation}{General notation/convention}
\newtheorem*{acknowledgement}{Acknowledgement}
\newenvironment{thmList}{
  \begin{enumerate}

}{\end{enumerate}}
\newenvironment{defList}{
  \begin{enumerate}

}{\end{enumerate}}
\begin{document}

\title{Smoothness of Loewner Slits}
\author{Carto Wong}
\date{\today}
\maketitle

\begin{abstract}
  In this paper, we show that the chordal Loewner differential equation with 
  $C^{\gb}$ driving function generates a $C^{\gb+\half}$ slit for $\half < \gb \leq 2$, except
  when $\gb = \frac{3}{2}$ the slit is only proved to be weakly $C^{1,1}$.
\end{abstract}

\section{Introduction}

% ===============================
%   NEED to update: page 3 [ZT] -> [RTZ]
% ===============================   

The Loewner differential equation is a classical tool in complex analysis
which has been successfully applied to various extremal problems,
including the famous de Branges theorem (see \cite{Bra}). In recent years, the
Schramm-Loewner evolution (SLE) has been extensively studied by
mathematicians and physicists. One can think of SLE as a random curve in the
upper half-plane, which is generated via Loewner differential equation with a
random driving function. In the meanwhile, some
natural questions in the deterministic side of SLE are still open. In this paper,
we investigate the smoothness of slits generated by $C^{\gb}$ driving functions.

Given a slit (definition in \S~\ref{S:definition}) $\gr \colon [0,T] \to \overline{\bbH}$ in
the upper half-plane $\bbH = \{ z \in \bbC \colon \Im z > 0 \}$, the region $H_t := \bbH
\setminus \gr([0,t])$ is simply connected for each $t$. There is a unique conformal
map $g_t \colon H_t \to \bbH$ satisfying the hydrodynamic normalization
\[
    g_t(z) = z + \frac{a_1(t)}{z} + \frac{a_2(t)}{z^2} + \cdots
\]
as $z \to \infty$. Figure~\ref{Fig:chordalLE} illustrates the situation. The coefficient
$a_1(t)$ is called the \emph{half-plane capacity} of the set $K_t = \gr([0,t])$. See \cite{RW}
for a geometric interpretation of half-plane capacity and its relation to conformal radius, and
see \cite{LLN} for a probabilistic approach. Although it is not immediate from the definition,
it is routine to show that $a_1(t)$ is a strictly increasing real-valued function with $a_1(0) = 0$.
If the slit is parametrized so that $a_1(t) = 2t$, then $g_t(z)$ is differentiable in $t$ and satisfies
the \emph{chordal Loewner differential equation}
\begin{equation} \label{LDE:forward_chordal}
    \left\{ \begin{aligned}       \frac{\partial}{\partial t} g(t,z) &= \frac{2}{g(t,z) - \gl(t)} \\
      g(0,z) &= z
    \end{aligned} \right.  
\end{equation}
for $z \in H_t$, where $\gl \colon [0,T] \to \bbR$ is a continuous function called
the \emph{driving function} of the slit. Moreover, $\gl(t) = g_t(\gr(t))$ is the image of
the tip under the conformal map. 

\begin{figure}[t]
  \centerline{\includegraphics[scale=0.6]{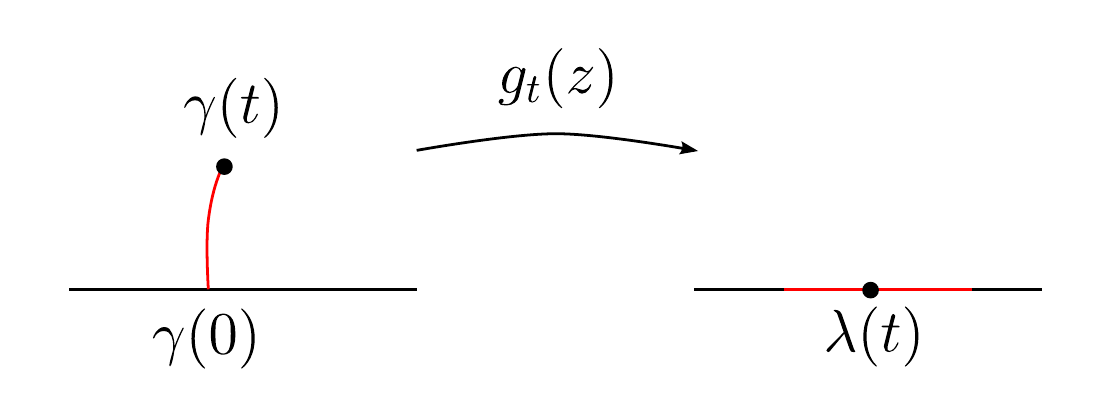}}
  \caption{A slit $\gr(t)$ and its driving function $\gl(t)$ are related by a conformal map.}
  \label{Fig:chordalLE}
\end{figure}

The foregoing procedure can be reversed. Suppose we are
given some continuous function $\gl \colon [0,T] \to \bbR$. For each $t \in [0,T]$, let $H_t$
be the set of points $z \in \bbH$ for which the solution of (\ref{LDE:forward_chordal}) is well-defined
up to time $t$, i.e.\ $g(s,z) \not= \gl(s)$ for $s \in [0,t]$. One can show that $H_t$ is a simply
connected region and $z \mapsto g(t,z)$ maps $H_t$ conformally onto $\bbH$ and satisfies the
hydrodynamic normalization. The set
$K_t := \bbH \setminus H_t$ is in general not a slit. Kufarev \cite{Kuf} constructed an example, in the
classical (radial) setting, for which a continuous
driving function does not generate a slit. The example can also be found in \cite[\S 3.4]{Dur}. Even
if the driving function is in $\Lip(\half)$, also known as $\half$-Holder continuous, the set $K_t$ may
not be locally connected (see \cite{MR} for an example).

Throughout this paper, we assume $\gl \colon [0,T] \to \bbR$ is $\Lip(\half)$, i.e.
\[
    \LipHalfNorm := \sup_{t_1 \not= t_2 \in [0,T]} \frac{\left| \gl(t_1) - \gl(t_1) \right|}
    {\left| t_1 - t_2 \right|^{\half}} < \infty.
\]
In 2005, Marshall and Rohde \cite{MR} showed\footnote{The theorems in \cite{MR} were proved in the radial
case. In a private communication, Don Marshall translated (with rigorous proof) the results to the chordal case.}
that there is an absolute constant $c_0 > 0$ so that for any $\gl \colon [0,T] \to \bbR$ with $\LipHalfNorm < c_0$,
the Loewner equation (\ref{LDE:forward_chordal}) generates a quasi-slit\footnote{By definition, a \emph{quasi-slit}
is a slit satisfying the Ahlfors three-point condition, i.e.\ there is a constant $L \geq 1$ such that for all points
$z_1$, $z_2$, $z_3$ on the slit in that order, $\left| z_1 - z_2 \right| + \left| z_2 - z_3 \right| \leq
L \left| z_1 - z_3 \right|$.} in the upper half-plane $\bbH$ and the slit meets $\bbR$ non-tangentially.
Lind \cite{Lin} proved that all these statements hold
for $c_0 = 4$, and this constant is the largest possible. On the other hand, in an unpublished paper
\cite{RTZ} Steffen Rohde, Huy Vo Tran and  Michel Zinsmeister give a sufficient condition for the driving function to
generate a rectifiable curve.

What more can we say if $\gl \colon [0,T] \to \bbR$ is more regular (smooth) than
$\Lip(\half)$? In \cite[page~59]{Ale} , a Russian book published in 1976, it was proved that
if $\gl \colon [0,T] \to \bbR$ has bounded first derivative then its slit is $C^1$. (The original statement was a radial
version. Here we formulate it in the chordal setting.) As of the writing of this paper
and up to the author's knowledge, it is the only result in the literature concerning the smoothness of a slit generated by
a driving function more regular than $\Lip(\half)$. Marshall and Rohde \cite{MR} implicitly suggest the following.

\begin{main}[heuristic version]
  $\gl \in C^{\gb} \Rightarrow \gr \in C^{\gb+\half}$ for $\gb > \half$.
\end{main}

In this paper, we will prove this statement for $\half < \gb \leq 2$, except when $\gb = \frac{3}{2}$ the slit $\gr$
is only proved to be weakly $C^{1,1}$. The precise statements are in Theorem~\ref{Thm:HolderDrivingFunction},
Theorem~\ref{Thm:C^(1,delta)} and Theorem~\ref{Thm:C^(1,1/2+delta)}, corresponding to the cases $\gb \in (\half, 1]$,
$\gb \in (1, \frac{3}{2}]$ and $\gb \in (\frac{3}{2},2]$. One of the key ingredients of our method is the Lipschitz continuity
(Theorem~\ref{Thm:mod_cont} below) of the map $\gl \mapsto \gr^{\gl}$, which was only known to be continuous \cite[Theorem~4.1]{LMR}.
Another ingredient is an integral representation of $\gr'(t)$, see Corollary~\ref{Cor:r'}.

\begin{refThm}{\ref{Thm:mod_cont}}[Lipschitz continuity]
  Suppose $\gl_1$, $\gl_2 \colon [0,T] \to \bbR$ satisfy $\Vert \gl_j \Vert_{\Lip(\half)} \leq 1$ for $j = 1$, 2. Then $\Vert
  \gr^{\gl_1} - \gr^{\gl_2} \Vert_{\infty} \leq c \, \Vert \gl_1 - \gl_2 \Vert_{\infty}$, where $c > 0$ is an absolute constant.
\end{refThm}

There is another natural and interesting question which we won't discuss in this paper but we mention it for the sake of
completion. If we know a slit $\gr$ is $C^n$, how smooth is its driving function? Earle and Epstein \cite{EE} answered this
question in 2001 for the radial case. Suppose $0 \in \gO \subseteq \bbC$ is a simply connected region and $\gr \colon (0,T] \to \gO$
is a slit avoiding the origin with base point $\gr(0) \in \partial \gO$. Let $R(t)$ be the conformal radius of $\gO_t := \gO
\setminus \gr((0,T])$ with respect to the origin. Earle and Epstein showed that if $\gr$ is $C^n$ regular on $(0,T]$ for some
integer $n \geq 2$, then the radial capacity $a(t) := - \log R(t)$ is $C^{n-1}$ on (0,T]. Moreover, if the slit is reparametrized
so that $a(t) = a(0) + t$, then its driving function $\gl$ is $C^{n-1}$ on $(a(0), a(T)]$. See \cite{EE} for the precise definitions
and statements. In the same paper, it was also proved that real analytic slits generate real analytic driving functions.

\begin{acknowledgement}
  The author would like to thank Steffen Rohde for numerous inspiring discussions and suggestions. The author offers heartfelt
  thanks to Vo Huy Tran, Don Marshall, James Gill, Joan Lind, Matthew Badger and Christopher McMurdie for their suggestions
  and comments in the earlier versions of this paper.
\end{acknowledgement}

\section{Definitions, notations and preliminaries} \label{S:definition} \label{S:definition}

\begin{gnotation}
  \begin{defList}
    \item[ ]  {\samepage
     \item  $a(\gve) \lesssim b(\gve)$} means $a(\gve) \leq C b(\gve)$ for some constant $C > 0$ (independent of $\gve$).
     \item  $a(\gve) \asymp b(\gve)$  means $a(\gve) \lesssim b(\gve)$ and $b(\gve) \lesssim a(\gve)$.
     \item  $a(\gve) \simeq b(\gve)$ as $\gve \to 0$ if $\frac{a(\gve)}{b(\gve)}$ has a positive and finite limit.
     \item  In this paper,  the lowercase $c$ is reserved to denote an absolute constant which may vary even in
               a single chain of equalities.
  \end{defList}                                                  
\end{gnotation}

\begin{definition}
  A \emph{slit} in $\bbH$ is a simple curve $\gr \colon [0,T] \to \overline{\bbH}$ with $\gr(0) \in \bbR$ and
  $\gr(t) \in \bbH$ for $0 < t \leq T$.
\end{definition}

All driving functions $\gl \colon [0,T] \to \bbR$ in this paper satisfy
\[
    \LipHalfNorm :=  \sup_{t_1 \not= t_2 \in [0,T]} \frac{\left| \gl(t_1) - \gl(t_1) \right|}
    {\left| t_1 - t_2 \right|^{\half}} < 4
\]    
(at least locally) and therefore generate slits by \cite{MR} and \cite{Lin}. We will use the following notations frequently.

\begin{notation}
  \begin{defList}
    \item[ ] {\samepage
    \item } The $\Lip(\half)$-norm \footnote{Strictly speaking, $\LipHalfNorm$ is only a semi-norm.} of $\gl \colon [0,T]
                \to \bbR$ is denoted by
                \[
                    \Vert \gl \Vert_{\Lip(\half,[0,T])} := \sup_{t_1 \not= t_2 \in [0,T]} \frac{\left| \gl(t_1) - \gl(t_2) \right|}
                    {\left| t_1 - t_2 \right|^{\half}}.
                \]
                Usually, we write $\LipHalfNorm$ instead of  $\Vert \gl \Vert_{\Lip(\half,[0,T])}$.
    \item   For positive integer $n \in \bbN$ and $0 < \ga \leq 1$, the $C^{n,\ga}$-norm of $\gl \colon [0,T] \to \bbR$ is
                \[
                    \Vert \gl \Vert_{C^{n,\ga}} =
                    \Vert \gl \Vert_{C^{n,\ga}([0,T])} := \sum_{k=0}^n \sup_{t \in [0,T]} \left| \gl^{(k)}(t) \right| +
                     \sup_{t_1 \not= t_2 \in [0,T]} \frac{\left| \gl^{(n)}(t_1) - \gl^{(n)}(t_2) \right|}{\left| t_1 - t_2 \right|^{\ga}}.
                \]
                For a slit $\gr$, its $C^{n,\ga}$-norm $\Vert \gr \Vert_{C^{n,\ga}}$ is defined similarly. If $\gb >1$ is not
                an integer, the notation $C^{\gb}$ refers to $C^{\left[\gb\right], \gb-\left[\gb\right]}$, where $\left[\gb\right]$
                is the integer part of $\gb$. For example, $C^{\frac{3}{2}}$ is the same as $C^{1,\half}$.
     \item  $\gr^{\gl} \colon [0,T] \to \overline{\bbH}$ denotes the slit generated by $\gl \colon [0,T] \to \bbR$.  When no
               confusion can occur, we write $\gr$ instead of $\gr^{\gl}$ for the sake of notation. The base of
               $\gr$ is $\gr(0) = \gl(0) \in \bbR$.
     \item  For each $t \in [0,T]$, $g_t \colon H_t \to \bbH$ denotes the (unique) conformal map from
                $H_t = \bbH \setminus \gr([0,t])$ onto the upper half-plane $\bbH$ satisfying  the normalization
                \[
                    g_t(z) = z + \frac{a_1(t)}{z} + \cdots
                \]
                as $z \to \infty$. All slits in this paper are parametrized by half-plane capacity, i.e.\ $a_1(t) = 2t$.
                Alternative notations such as $g(t,z)$ or $g^{\gl}_t(z)$ may be used interchangeably.
     \item  $f_t \colon \bbH \to H_t$ is the inverse function of $g_t$, i.e.\ $g_t(f_t(z)) = z$ for all $z \in \bbH$. We
                sometimes write $f(t,z)$ or $f_t^{\gl}(z)$.
     \item\label{Item:tau}
                For $0 \leq s < t \leq T$, we define $\gr_s(t) := g_s(\gr(t)) - \gl(s)$. To be flexible it may also be written
                as $\gr(s,t)$ or $\gr^{\gl}_s(t)$. The normalized version of $\gr(s,t)$ is $\tau(s,t) := \frac{\gr(s,t)}{\sqrt{t-s}}$.
                Note that $\tau(s,t) = 2i$ if and only if $\gl$ is constant on $[s,t]$.
     \item  In \S\ref{S:Lip(1/2+delta)}, we introduce the notation
                \[
                    L(s) = L^{\gl}(s) = \int_0^s \left[ \half + \frac{2}{\tau(s-u,s)^2} \right] \frac{du}{u}
                \]
                and show that $\gr'(s) = \frac{i}{\sqrt{s}} \, e^{L(s)}$ under appropriate assumptions.
   \end{defList}
\end{notation}  

\begin{definition}
  \begin{defList}
  \item[ ] {\samepage 
    \item }  We say that $\gl \colon [0,T] \to \bbR$ satisfies the \emph{\sigmaCondition}
  if $\gs \geq 0$ and
  \[
      \left| \gl(t_1) - \gl(t_2) \right| \leq \gs \left| t_1 - t_2 \right|^{\half}
  \]
  for all $t_1$,~$t_2 \in [0,T]$.
    \item  We say that $\gl \colon [0,T] \to \bbR$ satisfies the \emph{\MTdeltaCondition} if $\LipHalfNorm \leq 1$ and
               \[
                   \left| \gl(t_1) - \gl(t_2) \right| \leq M \left| t_1 - t_2 \right|^{\half + \gd}
               \]
               for all $t_1$,~$t_2 \in [0,T]$, where $M$, $T$,~$\gd > 0$.
    \item  We say that $\gl \colon [0,T] \to \bbR$ satisfies the \emph{\MTnAlphaCondition} if
               $\LipHalfNorm \leq 1$, $\gl \in C^{n,\ga}$ on $[0,T]$ and $\Vert \gl \Vert_{C^{n,\ga}} \leq M$,
               where $n \in \bbN$, $0 < \ga \leq 1$ and $M > 0$.
  \end{defList}
\end{definition}

We will not consider $\Lip(\half + \gd)$ driving functions until \S\ref{S:Lip(1/2+delta)}.
As a remark on terminologies, careful readers may see that
in (i) we do not make explicit reference to $T$ but we do in (ii). This is because
$\Lip(\half + \gd)$-norm is not invariant under Brownian scaling. The terminologies
reflect that all quantitative estimates in \S\ref{S:Lip(1/2+delta)} depend only on
$M$, $T$ and $\gd$, while in \S\ref{S:Lip(1/2)} our estimates are mostly in terms
of $\gs$.

In this paper, we use the diagram in Figure~\ref{Fig:LE_relation} to represent a
situation that $\gr(t)$ and $\gl(t)$ are related by the Loewner equation.

\begin{figure}[H]
  \centerline{\includegraphics[scale=0.6]{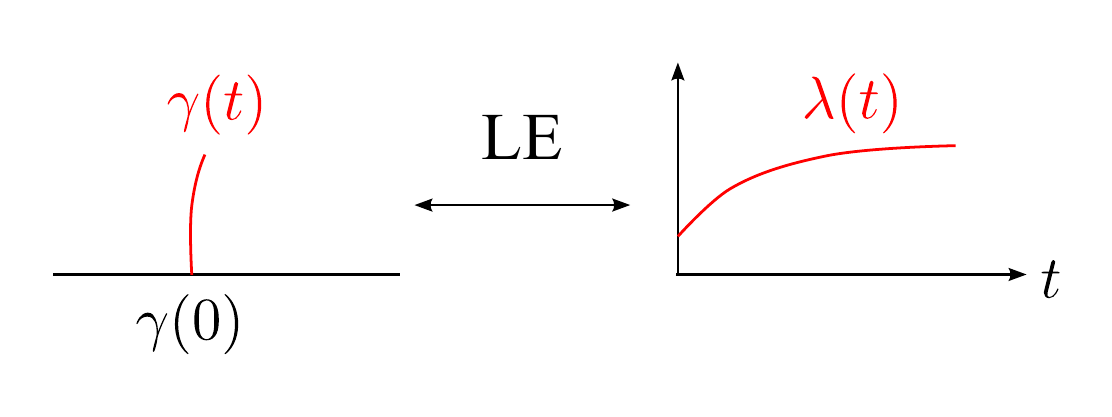}}
  \caption{$\gl(t)$ is the driving function of $\gr(t)$.}
  \label{Fig:LE_relation}
\end{figure}

For any continuous driving function $\gl \colon [0,T] \to \bbR$, the solution $g_t(z)$ of
(\ref{LDE:forward_chordal}) satisfies
\begin{align}
        \log g_t'(z) &= -  \int_0^t \frac{2}{[g_u(z) - \gl(u)]^2} \, du \label{E:logg'} \\
        g_t''(z) &= 4 g_t'(z) \int_0^t \frac{g_u'(z)}{[g_u(z) - \gl(u)]^3} \, du \label{E:g''}
\end{align}
for all $z \in H_t$. Equality (\ref{E:logg'}) can be derived easily if we differentiate (\ref{LDE:forward_chordal})
with respect to $z$, which gives
\[
    \frac{\partial}{\partial t} g_t'(z) = - \frac{2 g_t'(z)}{[g_t(z) - \gl(t)]^2}.
\]
To prove (\ref{E:g''}), we differentiate (\ref{E:logg'}) with respect to $z$. We comment that (\ref{E:logg'}) can
be used to estimate the size of $\left| g_s'(\gr(s+\gve)) \right|$ as $\gve \downarrow 0$, and this kind of estimates
is crucial in our work as well as other SLE problems. Equality (\ref{E:g''}) will be useful if one wants to obtain second
derivative estimates near the tip.

Equalities (\ref{E:logg'}) and (\ref{E:g''}) hold for any continuous driving function $\gl \colon [0,T] \to \bbR$. So
far we haven't made any smoothness assumption on $\gl$. We are going to do it in the coming sections.

\section{When $\gl \in \Lip(\half)$} \label{S:Lip(1/2)}

We begin by stating some useful facts.

\begin{fact} Suppose $\gl \colon [0,T] \to \bbR$ satisfies $\LipHalfNorm < 4$. \vspace{-0.2in}
  \begin{thmList}
    \item   (Scaling property)
                If we define $\glt \colon [0,1] \to \bbR$ by $\glt(s) := \frac{1}{\sqrt{T}} \left[ \gl(s T) - \gl(0) \right]$,
                then $\LipHalfNormt = \LipHalfNorm$ and for all $s \in [0,1]$,
                \[
                  \gr^{\glt}(s) = \frac{1}{\sqrt{T}} \, \left[ \gr^{\gl}(s T) -  \gr^{\gl}(0) \right].
                \]
                For example, suppose a slit $\gr$ is parametrized by half-plane capacity and $\gl$ is the driving function of $\gr$.
                The half-plane capacity reparametrization of the slit $3\gr(t)$ is $\widetilde{\gr}(t) = 3 \gr(\frac{t}{9})$. The
                scaling property says that the driving function of $\widetilde{\gr}$ is $\widetilde{\gl}(t) = 3 \gl(\frac{t}{9})$.

    \item  (Stationary property) For any $s \in (0,T)$, the \emph{time shift} $\gl_s \colon [0,T-s] \to \bbR$
               of $\gl$ is the function $\gl_s(u) := \gl(s+u)$. The corresponding slit is
               \[
                   \gr^{\gl_s}(u) = g_s(\gr(s+u)).
               \]
               See Figure~\ref{Fig:stationary} for an illustration.          
    \item  For any $t \in [0,T]$,
               \begin{equation} \label{E:r}
                   \gr^{\gl}(t) = \lim_{y \downarrow 0} f_t(\gl(t) + iy)
                                    = \gl(t) - \int_0^t \frac{2}{\gr(t-u,t)} \, du.
               \end{equation}
  \end{thmList}
\end{fact}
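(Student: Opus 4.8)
The plan is to treat all three statements as ``transfer'' principles for the Loewner flow~(\ref{LDE:forward_chordal}): I will derive (a) and (b) from a change of variables together with the uniqueness of the solution of (\ref{LDE:forward_chordal}), and (c) by integrating the Loewner ODE along the trajectory of the tip. I first record the one external input: since $\LipHalfNorm < 4$, the hull generated by $\gl$ is a (quasi-)slit by \cite{MR,Lin}, so $\partial H_s$ is locally connected for every $s \in (0,T]$, and by the Carath\'eodory extension theorem $f_s$ extends continuously to $\overline{\bbH}$, with the tip $\gr(s)$ the single prime end carried to $\gl(s)$ by $g_s$. This is exactly the first equality $\gr^{\gl}(t) = \lim_{y\downarrow 0} f_t(\gl(t)+iy)$ in (c); it does not use (a), (b), or the integral formula, so I may freely invoke it below.

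For (a), set $\widetilde g_s(z) := \tfrac{1}{\sqrt{T}}\bigl[ g_{sT}(\sqrt{T}\,z + \gl(0)) - \gl(0)\bigr]$ for $s \in [0,1]$. A routine computation from (\ref{LDE:forward_chordal}) gives $\tfrac{\partial}{\partial s}\widetilde g_s(z) = \tfrac{2}{\widetilde g_s(z) - \glt(s)}$, and the expansion $g_{sT}(w) = w + \tfrac{2sT}{w} + \cdots$ yields $\widetilde g_s(z) = z + \tfrac{2s}{z} + \cdots$; by uniqueness of the flow, $\widetilde g_s = g^{\glt}_s$. Feeding this into the characterization in (c) for both $\gl$ and $\glt$ gives $\gr^{\glt}(s) = \tfrac{1}{\sqrt T}[\gr^{\gl}(sT) - \gl(0)] = \tfrac{1}{\sqrt T}[\gr^{\gl}(sT) - \gr^{\gl}(0)]$, while $\LipHalfNormt = \LipHalfNorm$ is immediate. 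For (b), fix $s \in (0,T)$ and put $h_u := g_{s+u} \circ f_s$, which maps $\bbH \setminus g_s(\gr((s,s+u]))$ conformally onto $\bbH$; differentiating (\ref{LDE:forward_chordal}) in $t$ gives $\tfrac{\partial}{\partial u} h_u(z) = \tfrac{2}{h_u(z) - \gl_s(u)}$, and composing the expansions of $f_s$ and $g_{s+u}$ gives $h_u(z) = z + \tfrac{2u}{z} + \cdots$, so $h_u = g^{\gl_s}_u$ by uniqueness. Since $\gr^{\gl}(s+u) \in H_s$ for $u > 0$ (the curve is simple), $g_s$ is analytic there, and with $h_u^{-1} = g_s \circ f_{s+u}$ and the continuous extension of $f_{s+u}$ at $\gl(s+u)$ one gets $\gr^{\gl_s}(u) = \lim_{y\downarrow 0} h_u^{-1}(\gl_s(u)+iy) = \lim_{y\downarrow 0} g_s(f_{s+u}(\gl(s+u)+iy)) = g_s(\gr^{\gl}(s+u))$.

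It remains to prove the second equality in (c). Fix $t$ and $y > 0$ and set $w := f_t(\gl(t)+iy) \in H_t \subseteq H_s$ for $s \le t$. The trajectory $s \mapsto g_s(w)$ solves the Loewner ODE on $[0,t]$ with $g_0(w) = w$ and $g_t(w) = \gl(t)+iy$, so
\[
    w = \gl(t) + iy - \int_0^t \frac{2}{g_s(w) - \gl(s)}\,ds = \gl(t) + iy - \int_0^t \frac{2}{g_{t-u}(w) - \gl(t-u)}\,du .
\]
Now let $y \downarrow 0$. On the left, $w \to \gr^{\gl}(t)$ by the first equality. On the right, $g_{t-u}(w) \to g_{t-u}(\gr^{\gl}(t))$ for each $u \in (0,t]$ by joint continuity of the flow, so the integrand converges pointwise to $\tfrac{2}{g_{t-u}(\gr^{\gl}(t)) - \gl(t-u)} = \tfrac{2}{\gr(t-u,t)}$. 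To pass the limit under the integral I would use dominated convergence: comparing $g_{t-u}(w) - \gl(t-u)$ with the Brownian-rescaled time-shifted flow $g^{\gl_{t-u}}$ (via the flow identifications established in (a) and (b), which do not depend on the integral formula) and using that a half-plane-capacity parametrized quasi-slit has its tip at distance $\asymp 1$ from its base at capacity time $1$, one gets $|g_{t-u}(w) - \gl(t-u)| \gtrsim \sqrt{u}$ uniformly for small $y$, so the integrand is dominated by $C u^{-1/2} \in L^1[0,t]$. This yields $\gr^{\gl}(t) = \gl(t) - \int_0^t \tfrac{2}{\gr(t-u,t)}\,du$ and, as a byproduct, the convergence of that integral. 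The main obstacle is precisely this last interchange of limit and integral near the tip --- the uniform lower bound $|g_{t-u}(f_t(\gl(t)+iy)) - \gl(t-u)| \gtrsim \sqrt{u}$ as $u \downarrow 0$, which is where the hypothesis $\LipHalfNorm < 4$ and the quasi-slit (non-tangential) results of \cite{MR,Lin} genuinely enter; the remaining steps are routine.
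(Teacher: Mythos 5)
Your treatment of (a) and (b) is correct and is precisely the ``elementary exercise'' the paper has in mind: conjugating $g_t$ by the affine map $z\mapsto\sqrt{T}z+\gl(0)$ for scaling, and setting $h_u=g_{s+u}\circ f_s$ for stationarity, then invoking uniqueness of the Loewner flow and the Carath\'eodory boundary extension. Your first equality in (c) is exactly the paper's argument. For the second equality in (c), however, you take a genuinely different route from the paper. The paper applies the fundamental theorem of calculus directly to the boundary trajectory $u\mapsto g_u(\gr(t))$ on $[0,t]$: $g_u(\gr(t))$ is $C^1$ on $[0,t)$ with derivative $\frac{2}{\gr(u,t)}$, one checks $|\gr(u,t)|\asymp\sqrt{t-u}$ (the quasi-slit non-tangentiality of \cite{MR,Lin}) so the derivative is in $L^1[0,t]$, and the Carath\'eodory extension gives $g_u(\gr(t))\to\gl(t)$; equivalently, one writes the identity on $[0,t-\delta]$ and sends $\delta\downarrow 0$. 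You instead regularize off the boundary by working with $w=f_t(\gl(t)+iy)$, apply FTC to the smooth flow (trivial), and then pass $y\downarrow 0$ by dominated convergence. Both rest on the same quasi-slit input $|\gr(u,t)|\asymp\sqrt{t-u}$, but your dominated-convergence step needs the strictly stronger \emph{uniform-in-$y$} lower bound $|g_{t-u}(f_t(\gl(t)+iy))-\gl(t-u)|\gtrsim\sqrt{u}$. You correctly flag this as the nontrivial ingredient, but you do not prove it, and it does not follow directly from the tip estimate (which is only the $y=0$ limit); establishing it requires an additional comparison argument for the reverse flow, and for $\LipHalfNorm$ close to $4$ the na\"ive imaginary-part comparison (as in the proof of Lemma~\ref{L:diamE}) breaks down. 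The paper's formulation sidesteps the uniformity issue entirely, so while your proposal is a valid alternative once that lemma is supplied, the direct FTC on the tip trajectory is the cleaner and the intended route.
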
 

\begin{figure}[ht]
  \centerline{\includegraphics[scale=0.6]{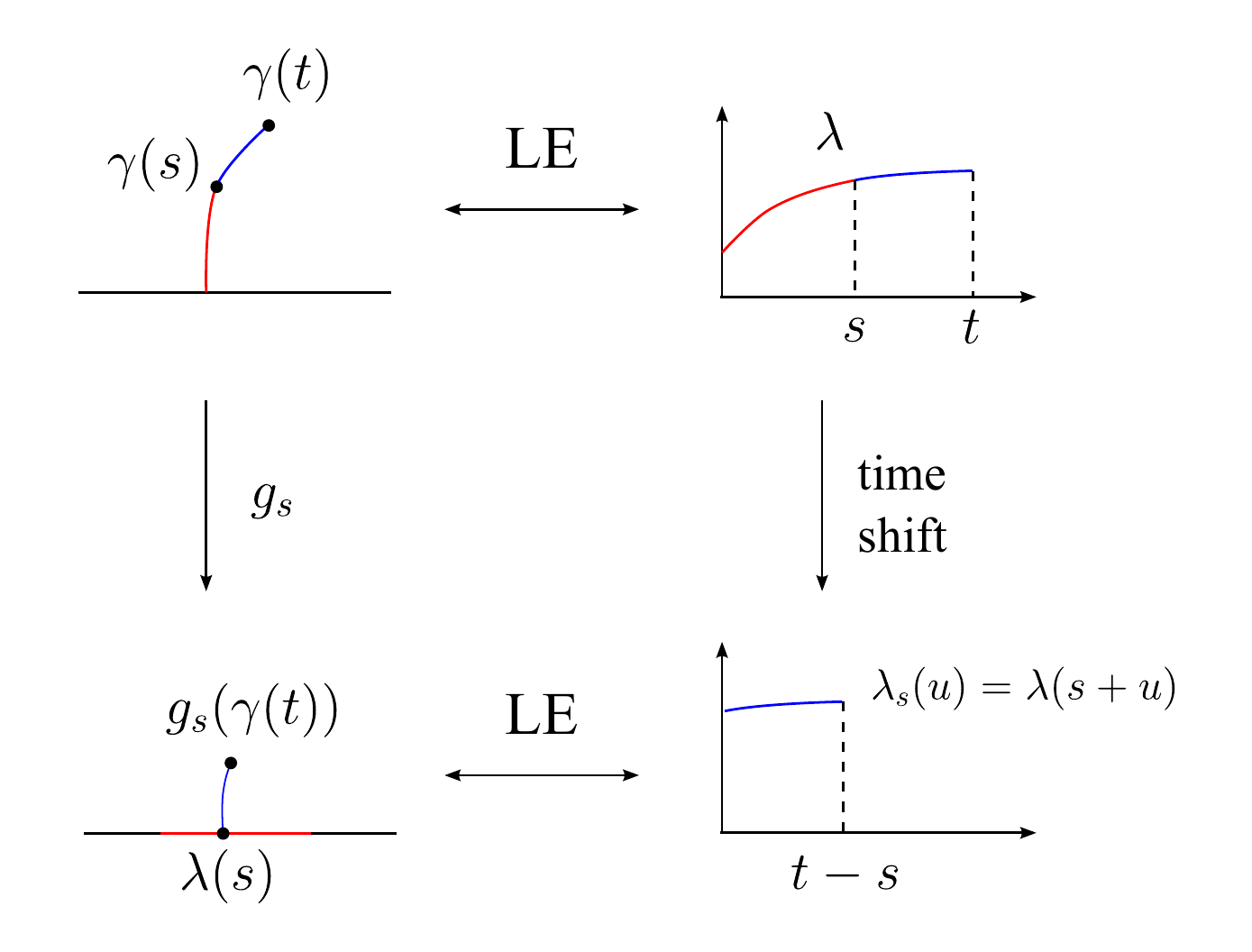}}
  \caption{The stationary property states that the above diagram commutes.} \label{Fig:stationary}
\end{figure}

The scaling property is extremely useful; in many situations it suffices to work only on the case $T = 1$.
The proofs of the scaling property and stationary property are elementary exercises. As we know from
\cite{MR} that $\gr([0,t])$ is a slit (in particular, locally connected), it follows from Caratheodory continuity
theorem (see, for example, \cite[Theorem~2.1]{Pom}) that the conformal map $f_t \colon \bbH \to
H_t$ is continuous at the boundary point $\gl(t)$. This proves the first equality in (\ref{E:r}). The second
equality is an immediate consequence of the Loewner differential equation (\ref{LDE:forward_chordal})
and the fundamental theorem of calculus applied to the function $u \mapsto g_u(\gr(t))$ ($0 \leq u \leq t$).
The first equality in (\ref{E:r}) is a non-trivial result for SLE curves,
whose driving functions are random and almost surely not $\Lip(\half)$ (see \cite{RS}, \cite{LSW04}).

For $0 \leq \gs < 4$, let $X_{\gs}$ be the space of all (continuous) functions
$\gl \colon [0,1] \to \bbR$ satisfying $\gl(0) = 0$ and $\LipHalfNorm \leq \gs$.
Under the supremum norm $\Vert \cdot \Vert_{\infty}$, the metric space $X_{\gs}$ is compact.
It is known \cite[Theorem~4.1]{LMR} that the map $X_{\gs} \to \bbH$ defined by $\gl \mapsto \gr^{\gl}(1)$ is
continuous. It follows that $E_{\gs} = \{ \gr^{\gl}(1) \colon \gl \in X_{\gs} \}$ is a compact subset
of $\bbH$.

By the scaling property,
\[
    \gr^{\gl}(t) \in \sqrt{t} \, E_{\gs} 
\]
for any $0 \leq t \leq 1$, $\gl \in X_{\gs}$, $0 \leq \gs < 4$. On the other hand,
it is easy to show (using compactness argument) that $E_{\gs}$ shrinks to
a singleton $\{ 2i \}$ as $\gs \to 0$. Our first question is: at what rate does
the diameter of $E_{\gs}$ go to zero?

\begin{lemma} \label{L:diamE}
  Suppose $\gl \colon [0,1] \to \bbR$ satisfies the \sigmaCondition\ with
  $0 \leq \gs < 4$ and $\gl(0) = 0$. Then
  \[
      \left| \Re \gr^{\gl}(1) \right| \leq \gs \qquad \text{and} \qquad
      4 - \gs^2 \leq [\Im \gr^{\gl}(1)]^2 \leq 4.
  \]
  In particular, $\diam(E_{\gs}) \leq c \, \gs$ for all $0 \leq \gs < 4$, where
  $c > 0$ is an absolute constant. 
\end{lemma}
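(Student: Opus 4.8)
The plan is to exploit formula (\ref{E:r}) with $t = 1$, which gives
\[
    \gr^{\gl}(1) = \gl(1) - \int_0^1 \frac{2}{\gr(1-u,1)}\,du,
\]
and to control the integrand by tracking how far the normalized tip $\tau(s,1) = \gr(s,1)/\sqrt{1-s}$ can wander from its ``constant-driving'' value $2i$. The first step is to establish the real-part bound. Taking real parts above and using $\gl(1) = \gl(1) - \gl(0)$ together with the \sigmaCondition\ (so $|\gl(1)| \leq \gs$), it suffices to show the integral term is purely imaginary — but it is not in general, so instead I would show that the contribution of $\Re \bigl( 2 / \gr(1-u,1) \bigr)$ integrates to something controlled. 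A cleaner route: differentiate $\Re \gr^{\gl}(t)$ in $t$ using the stationary property, or alternatively use a direct estimate on $g_1'$ via (\ref{E:logg'}). In fact I expect the sharp statement $|\Re \gr^{\gl}(1)| \leq \gs$ to come from a monotonicity/conservation argument: define $\phi(t) = \Re \gr^{\gl}(t) - \gl(t)$ and note from the Loewner flow applied to the tip that $\dot\phi$ has a sign-definite structure relative to the imaginary part, forcing $|\Re \gr^{\gl}(1) - \gl(1)|$ to be dominated by the total variation-type quantity already bounded by the $\Lip(\half)$ data. The most economical version is probably: by the stationary property $\gr^{\gl_s}(1-s) = g_s(\gr(1)) - \gl(s)$, and one reads off that $\Re$ of the tip, measured from the driving function, only changes through the $\gl$ term, giving the bound $\gs$ after integrating the Loewner field whose real part is odd about the vertical line through $\gl$.

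For the imaginary-part bounds, the key observation is that $\Im \gr^{\gl}(t)^2$ (or a closely related quantity like $\Im[\gr(s,t)^2]$) evolves nicely under the flow. Concretely, writing $w(u) = \gr(u,1)$, the Loewner equation gives $\frac{d}{du} w(u) = \frac{2}{w(u)}$ backward in a suitable sense, hence $\frac{d}{du}\bigl(w(u)^2\bigr) = 4$, which would make $w(u)^2$ linear and pin $\Im \gr^{\gl}(1)$ exactly — that is the $\gl \equiv$ const case giving $2i$, i.e. $[\Im]^2 = 4$. When $\gl$ is not constant the tip's motion picks up the $\gl$-drift, and the correct statement is that $h(t) := [\Im \gr^{\gl}(t)]^2 - 4t$ has a derivative controlled by $\gl$'s oscillation; the upper bound $[\Im \gr^{\gl}(1)]^2 \leq 4$ should follow because adding a nonconstant real driving term can only ``push the curve sideways,'' shortening its imaginary extent (a Schwarz-lemma / Loewner-monotonicity phenomenon), and the lower bound $4 - \gs^2$ should come from a quantitative version of the same ODE comparison, where the loss is exactly the square of the $\Lip(\half)$-budget $\gs$.

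The main obstacle I anticipate is making the imaginary-part argument quantitative with the precise constant $\gs^2$ rather than a generic $C\gs^2$ or $C\gs$; this likely requires the sharpest form of the comparison, perhaps via the identity $g_1'(\gr(1))$-type estimates or via directly integrating $\frac{d}{dt}\bigl[\Im \gr^{\gl}(t)\bigr]^2$ and bounding the cross term $2\,\Im \gr \cdot \Im \dot{\gr}$ using Cauchy–Schwarz against $\int_0^1 |\gl'|^2$-like quantities — but of course $\gl$ need not be differentiable, so one must work with the $\Lip(\half)$ modulus directly, probably by a dyadic decomposition of $[0,1]$ or by the representation (\ref{E:r}) and estimating $\int_0^1 2\,\bigl(\tfrac{1}{\gr(1-u,1)} - \tfrac{1}{2i\sqrt{u}}\bigr) du$. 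Once both coordinate bounds are in hand, the $\diam(E_\gs) \leq c\,\gs$ conclusion is immediate: any two points $\gr^{\gl_1}(1), \gr^{\gl_2}(1) \in E_\gs$ have real parts within $2\gs$ of each other, and imaginary parts both in the interval $[\sqrt{4-\gs^2},\,2] \subseteq [2 - \tfrac{\gs^2}{2},\,2]$ (for $\gs < 4$, after adjusting the constant near the endpoint), so the Euclidean distance is $\leq c\,\gs$.
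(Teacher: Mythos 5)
Your proposal identifies some of the right ingredients (the special value $2i$ for constant driving, the idea that $\Lip(\half)$ oscillation perturbs the tip by $O(\gs)$), but as written it is a plan with acknowledged gaps rather than a proof, and the gaps are substantive. Two problems stand out.

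First, for the imaginary-part bounds you propose to track $\frac{d}{dt}[\Im\gr^{\gl}(t)]^2$, but $\gr^{\gl}$ is \emph{a priori} not differentiable in $t$ for a general $\Lip(\half)$ driving function — indeed, establishing such differentiability is the entire subject of the later sections. Likewise your suggested identity $\frac{d}{du}w(u) = \frac{2}{w(u)}$ for $w(u)=\gr(u,1)$ is wrong: $\gr(u,1)=g_u(\gr(1))-\gl(u)$, so the Loewner flow gives $\partial_u\gr(u,1)=\frac{2}{\gr(u,1)}-\gl'(u)$, which requires $\gl'$. You flag this obstruction yourself but never resolve it; the dyadic decomposition and the ``integrate the difference from $1/(2i\sqrt{u})$'' routes are left as hopes. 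The paper sidesteps this entirely by running the \emph{reverse} flow $\dot h_t=-2/(h_t-\xi(t))$, $\xi(t)=\gl(1-t)$, starting from $\xi(0)+iy$ and letting $y\downarrow0$. Writing $h_t=x_t+iy_t$, $A_t=(x_t-\xi(t))^2$, $B_t=y_t^2$, one gets the \emph{exact} ODE $\dot B_t=4B_t/(A_t+B_t)$ with no $\gl'$ anywhere, and then $A_t\leq\gs^2 t$ plus a clean comparison (assuming $B_t<at$ first fails at some $T$, where $a=4-\gs^2$) yields both the upper bound $B_1\leq 4$ and the sharp lower bound $B_1\geq 4-\gs^2$. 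This is the key technical device missing from your sketch.

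Second, the real-part bound. Your several alternatives (``a monotonicity/conservation argument,'' ``the Loewner field whose real part is odd about the vertical line through $\gl$'') gesture at the right picture but none is carried through. The paper's argument is a one-line monotonicity observation for the \emph{forward} flow: if $a\leq\gl(t)\leq b$ and $\Re z_0>b$ (resp.\ $<a$), then $\Re g_t(z_0)$ stays $>b$ (resp.\ $<a$) for all $t\leq 1$; hence every point of the hull, and in particular $\gr(1)$, has real part in $[a,b]=[-\gs,\gs]$. This same observation is also what gives the bound $A_t\leq\gs^2 t$ used in the imaginary-part argument (after Brownian scaling), so it does double duty. You would want to state and prove this flow-monotonicity fact explicitly rather than allude to it. The final $\diam(E_\gs)\lesssim\gs$ step in your proposal is fine once the coordinate bounds are secured.
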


\begin{figure}[H]
  \centerline{\includegraphics[scale=0.8]{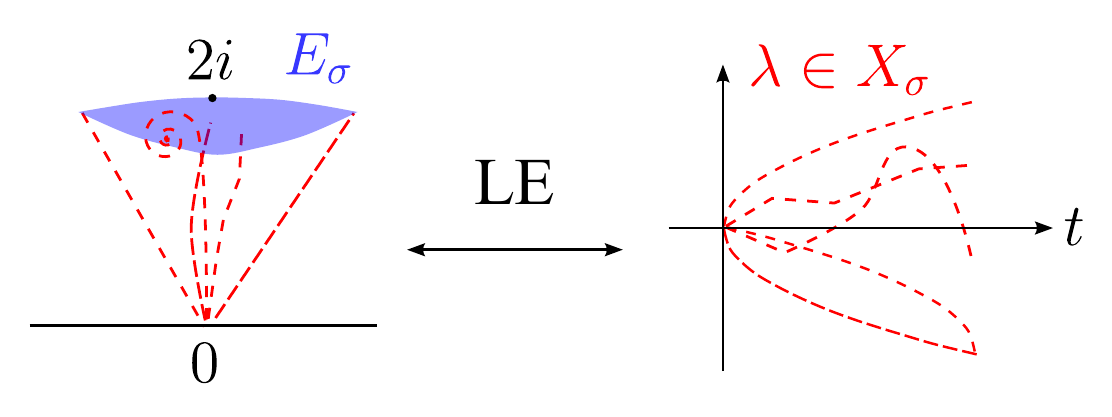}}
  \caption{A sketch of the compact set $E_{\gs}$ for $\gs = 1$.
                  When $\gs$ is close to zero, the set $E_{\gs}$ becomes very thin by Lemma~\ref{L:diamE}.}
                  \label{Fig:diamE}
\end{figure}

\begin{proof}
  Write $\gr(t) = \gr^{\gl}(t)$ for the sake of notation.
  The estimate $\left| \Re \gr(1) \right| \leq \gs$ follows from the simple observation
  that if $a \leq \gl(t) \leq b$ for all $t \in [0,1]$, $z_0 \in \bbH$ and $\Re(z_0) > b$
  (respectively $\Re(z_0) < a$), then $g_t(z_0)$ is defined and satisfies
  $\Re g_t(z_0) > b$ (respectively $\Re g_t(z_0) < a$) for all $t \in [0,1]$.

  To estimate $\Im \gr(1)$, we use the fact that
  \[
        \gr(1) = \lim_{y \downarrow 0} h_1(\gl(1) + iy),
  \]
  where $h_t(z)$ is the solution to the initial value problem
  \begin{equation} \label{IVP:reverseFlow}
      \left\{ \begin{aligned}
        \dot{h}_t(z) &= - \frac{2}{h_t(z) - \xi(t)} \\
        h_0(z) &= z
      \end{aligned} \right.
  \end{equation}
  with $\xi(t) := \gl(1-t)$. Fix any $y > 0$ and write $h_t(\xi(0) + iy)
  = x_t + iy_t$ ($x_t$, $y_t \in \bbR$). Let $A_t = (x_t - \xi(t))^2$ and
  $B_t = y_t^2$. By scaling, or by our argument in the beginning of the proof, $A_t \leq \gs^2 t$ for all
  $0 \leq t \leq 1$. Comparing the imaginary parts of (\ref{IVP:reverseFlow}), we have
  \[
      \dot{B}_t = \frac{4B_t}{A_t + B_t}.
  \]
  The obvious upper bound is $\dot{B}_t \leq 4$ and therefore $B_1 \leq 4 + y$, showing that
  $\Im h_t(\gl(1)+iy) \leq \sqrt{4 + y}$. Letting $y \downarrow 0$ gives $\Im \gr(1) \leq 2$.
  
  For the lower bound of $B_1$, we assume without loss of generality that $a := 4 - \gs^2 > 0$.
  (Otherwise, the lower bound is trivial.) Suppose for the sake of contradiction
  that $B_t < at$ for some $t \in [0,1)$. Let $T = \inf \{ t \geq 0 \colon B_t < at \}$. We
  have $0 < T < 1$, $B_T = aT$ and, for $0 \leq t \leq T$,
  \[
      \dot{B}_t = \frac{4B_t}{A_t + B_t} \geq \frac{4at}{\gs^2t + at} = a.
  \]
  This shows that $B_T \geq B_0 + aT > aT$, which is a contradiction. We have proved that
  $B_1 \geq 4 - \gs^2$.
\end{proof}
 
Consider the example
$\gl(t) = 2 \sqrt{\gk} \, (1 - \sqrt{1-t})$. When $0 < \gk < 4$, this driving function satisfies the
\sigmaCondition\ for $\gs = 2 \sqrt{\gk}$ and generates a logarithmic spiral with tip
\[
    \gr(1) = \sqrt{\gk} + i \sqrt{4-\gk} = \frac{\gs}{2} + i \sqrt{4 - \frac{\gs^2}{4}}.
\]    
(See \cite{KNK} for the computation and \cite{LMR} for a more conceptual approach.) This example
together with Lemma~\ref{L:diamE} show
\[
    c_1 \gs \leq \diam(E_{\gs}) \leq c_2 \gs.
\]
for all $0 < \gs < 4$, where $c_1$, $c_2 > 0$ are absolute constants.

The compactness of $E_{\gs}$ has a simple geometric consequence. If $\gl(0) = 0$ and $\gs = \LipHalfNorm
< 4$, by scaling we see that $\gr(t) \in \sqrt{t} E_{\gs}$ and the slit $\gr$ is contained in a cone whose
angle depends on $\gs$. If $\gl \in \Lip(\half + \gd)$, one has $\gr(t) \in \sqrt{t} E_{\gs(t)}$ with
$\gs(t) \lesssim t^{\gd}$ as $t \to 0$. The slit grows vertically. See Figure~\ref{Fig:vertically}.

\begin{figure}[H]
  \centerline{\includegraphics[scale=0.8]{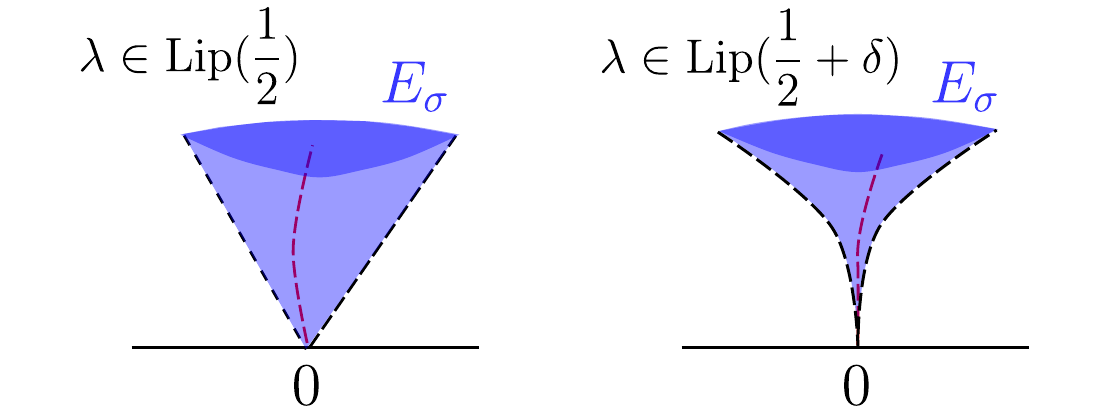}}
  \caption{One main difference between slits of $\Lip(\half)$ and $\Lip(\half + \gd)$ driving functions.}
  \label{Fig:vertically}
\end{figure} 

In \S\ref{S:Lip(1/2+delta)}, we will show that
\[
    \gr'(t) = \lim_{\gve \downarrow 0} \frac{i}{\sqrt{\gve} g_{t-\gve}'(\gr(t))}
    = \lim_{\gve \downarrow 0} \frac{i}{\sqrt{\gve} g_{t}'(\gr(t+\gve))}
\]
under appropriate smoothness assumption on $\gl$, and showing that
$\left| g_{t}'(\gr(t+\gve)) \right| \simeq \gve^{-\half}$ as $\gve \downarrow 0$ is more or less
equivalent to showing that $\gr'(t)$ exists. Of course, in this section we are still in the $\Lip(\half)$
case and do not expect $\gr(t)$ to be differentiable. The next lemma says that
$\left| g_{t-\gve}'(\gr(t)) \right| \asymp \gve^{-\half + O(\gs)}$, with an error term in
the exponent.

\begin{lemma} \label{L:f'_Lip(1/2)}
  If $\gl \colon [0,T] \to \bbR$ satisfies the \sigmaCondition\ for some $\gs \in [0,1]$, then
  for any $0 < s < t \leq T$,
  \[
      \left( \frac{t}{t-s} \right)^{\half - c \gs} \leq \left| g_s'(\gr(t)) \right| \leq
      \left( \frac{t}{t-s} \right)^{\half + c \gs},
  \]
  where $c > 0$ is an absolute constant.
\end{lemma}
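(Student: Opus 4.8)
The plan is to control $\log|g_s'(\gr(t))|$ via the integral formula \eqref{E:logg'}, applied along the flow line starting at $z = \gr(t)$. Writing $z = \gr(t)$ and using the stationary property, for $0 \le u \le s$ we have $g_u(\gr(t)) - \gl(u) = \gr(u,t)$, so \eqref{E:logg'} gives
\[
    \log g_s'(\gr(t)) = - \int_0^s \frac{2}{\gr(u,t)^2} \, du.
\]
Taking real parts, $\log|g_s'(\gr(t))| = -\int_0^s \Re\!\bigl[2/\gr(u,t)^2\bigr]\,du = -\int_0^s \frac{2\,\Re[\gr(u,t)^2]}{|\gr(u,t)|^4}\,du$. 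The first step is therefore to get two-sided control on $\gr(u,t)$ — equivalently on the normalized quantity $\tau(u,t) = \gr(u,t)/\sqrt{t-u}$ — using the $\sigma$-Lip($\tfrac12$) hypothesis. The point is that if $\gl$ has small $\Lip(\half)$-norm, then $\tau(u,t)$ is close to $2i$: indeed, by the stationary and scaling properties, $\tau(u,t)$ is the tip of the slit driven by a normalized time-shift of $\gl$, which lies in the compact set $E_\sigma$, and by Lemma~\ref{L:diamE} this set has diameter $\lesssim \sigma$ and sits near $2i$. So $\gr(u,t) = \sqrt{t-u}\,\tau(u,t)$ with $\tau(u,t) \in E_\sigma$, and $E_\sigma \subseteq \{|w - 2i| \le c\sigma\}$.

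The second step is to convert this pointwise control of $\tau(u,t)$ into the claimed bound on the integral. From $\tau(u,t) = 2i + O(\sigma)$ we get $2/\tau(u,t)^2 = 2/(2i)^2 + O(\sigma) = -\tfrac12 + O(\sigma)$, so
\[
    \log|g_s'(\gr(t))| = -\int_0^s \Re\!\left[\frac{2}{\tau(u,t)^2}\right] \frac{du}{t-u}
    = -\int_0^s \left(-\tfrac12 + O(\sigma)\right)\frac{du}{t-u}
    = \left(\tfrac12 + O(\sigma)\right)\log\frac{t}{t-s},
\]
where the $O(\sigma)$ in the last step is bounded by $c\sigma$ uniformly in $u$ (hence pulls out of the integral). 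Exponentiating yields exactly
\[
    \left(\frac{t}{t-s}\right)^{\tfrac12 - c\sigma} \le |g_s'(\gr(t))| \le \left(\frac{t}{t-s}\right)^{\tfrac12 + c\sigma}.
\]
Note $\log\frac{t}{t-s} \ge 0$, so the sign of the error term in the exponent is handled cleanly by taking $c$ the same absolute constant on both sides.

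The main obstacle is making the first step — the two-sided estimate $\tau(u,t) = 2i + O(\sigma)$ — fully rigorous and with the error genuinely linear in $\sigma$. Lemma~\ref{L:diamE} gives $|\Re \gr^{\gl}(1)| \le \sigma$ and $4 - \sigma^2 \le [\Im \gr^{\gl}(1)]^2 \le 4$, which indeed forces $\gr^{\gl}(1)$ into a disc of radius $\lesssim \sigma$ about $2i$ for $\sigma$ bounded (say $\sigma \le 1$); one must check the time-shift of a $\sigma$-Lip($\tfrac12$) function, after the Brownian rescaling built into $\tau$, is again $\sigma$-Lip($\tfrac12$) on $[0,1]$ — this is exactly the scaling property combined with the stationary property. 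A secondary point is that $1/\tau(u,t)^2$ must stay bounded near $u = s$ (where $t - u \to t - s > 0$, so there is no singularity) and near $u = 0$ (where $\tau(u,t) \to$ a point of $E_\sigma$, still bounded away from $0$ since $\Im \ge \sqrt{4-\sigma^2} > 0$ for $\sigma \le 1$); hence the integrand is $O(1/(t-u))$ throughout and the integral converges to $c\log\frac{t}{t-s}$ as needed. Once these are in place the estimate follows by the computation above.
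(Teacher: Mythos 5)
Your proposal is correct and follows essentially the same route as the paper: both start from the integral formula \eqref{E:logg'}, both rewrite the kernel in terms of $\tau(u,t)$, both observe that the zero driving function gives exactly $-\tfrac12$ and use Lemma~\ref{L:diamE} (via scaling and the stationary property) to bound $\left|\tfrac12 + 2/\tau(u,t)^2\right| \le c\gs$, and both integrate $\frac{du}{t-u}$ to get the log. The only cosmetic difference is that you take real parts to work with $\log|g_s'|$ directly, whereas the paper bounds the complex modulus $\left|\log g_s'(\gr(t)) - \tfrac12\log\tfrac{t}{t-s}\right|$ after subtracting the $\gl\equiv 0$ identity; the two are interchangeable.
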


\begin{proof}
  Without loss of generality, we assume $s = 1$ and $\gl(0) = 0$. Let $w = \gr(t)$. Then (\ref{E:logg'})
  gives
  \[
      \log g'_1(w) = \int_0^1 - \frac{2}{\tau(u,t)^2} \, \frac{du}{t-u},
  \]
  where $\tau(u,t) = \frac{g_u(w) - \gl(u)}{\sqrt{t-u}}$ was defined in Notation~(\ref{Item:tau}) in \S\ref{S:definition}.
  If the driving function $\gl$ is identically zero, $\tau^{\gl}(u,t)$ becomes $\tau^0(u,t) \equiv 2i$
  and the above equality reduces to
  \[
      \half \log \frac{t}{t-1} = \int_0^1 \half \, \frac{du}{t-u}.
  \]
  Subtracting the two equalities gives   
  \begin{equation} \label{E:log(g')_1}
      \log g_1'(w) - \half \log \frac{t}{t-1} = - \int_0^1 \left[ \half + \frac{2}{\tau(u,t)^2}
      \right] \frac{du}{t-u}.
  \end{equation}   
  By Lemma~\ref{L:diamE}, $\left| \half + \frac{2}{\tau(u,t)^2} \right| \leq c \gs$,
  where $c > 0$ is an absolute constant. Here we have implicitly used the condition $\gs \leq 1$,
  which guarantees that $\tau(u,t)$ stays in a fixed compact set $E_1 \subseteq \bbH$. The absolute
  constant $c$ in our last estimate is related to the derivative bound of the map $z \mapsto \frac{2}{z^2}$ on
  the compact set $E_1$.
  
  Finally, equation (\ref{E:log(g')_1}) gives
  \[
      \left| \log g'_1(w) - \half \log \frac{t}{t-1} \right|
      \leq \int_0^1 \frac{c \, \gs}{t-u} \, du = c \, \gs \log \frac{t}{t-1}.
  \]
\end{proof}

Lemma~\ref{L:diamE} and the following Theorem~\ref{Thm:mod_cont} will serve as two
fundamental tools for the rest of this paper.

\begin{theorem}[Lipschitz continuity] \label{Thm:mod_cont}
  Suppose $\gl$,~$\widetilde{\gl} \colon [0,T] \to \bbR$
  satisfy the \sigmaCondition\ for $\gs = 1$.
  Then, $\left\Vert \gr^{\gl} - \gr^{\widetilde{\gl}} \right\Vert_{\infty} \leq c \,
  \left\Vert \gl - \widetilde{\gl} \right\Vert_{\infty}$, where $c > 0$ is an absolute constant.
\end{theorem}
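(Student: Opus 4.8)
The plan is to fix $t \in [0,T]$ and estimate $|\gr^{\gl}(t) - \gr^{\glt}(t)|$ by comparing the two reverse flows that produce the tips, following the same idea used in the proof of Lemma~\ref{L:diamE}. Using the representation $\gr^{\gl}(t) = \lim_{y \downarrow 0} h_t(\gl(t)+iy)$, where $h$ solves the reverse-time equation $\dot h_u = -2/(h_u - \xi(u))$ with $\xi(u) = \gl(t-u)$, and the analogous statement for $\glt$ with $\widetilde\xi(u) = \glt(t-u)$, I would write $\phi_u = h_u(\gl(t)+iy) - \widetilde h_u(\glt(t)+iy)$ and derive a differential inequality for $|\phi_u|$. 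Subtracting the two flow equations gives
\[
    \dot\phi_u = \frac{2}{h_u - \xi(u)} \cdot \frac{h_u - \widetilde h_u - (\xi(u) - \widetilde\xi(u))}{\widetilde h_u - \widetilde\xi(u)} \cdot (\widetilde h_u - \widetilde\xi(u))\big/(h_u - \xi(u))^{-1},
\]
which I would organize as a bound of the form $|\dot\phi_u| \lesssim \big(|\phi_u| + \Vert \gl - \glt\Vert_\infty\big) \cdot \big|\tfrac{2}{(h_u-\xi(u))(\widetilde h_u - \widetilde\xi(u))}\big|$.

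The key input is that the prefactor $\big|\tfrac{1}{(h_u-\xi(u))(\widetilde h_u-\widetilde\xi(u))}\big|$ is integrable in $u$ over $[0,t]$ with a bound independent of $y$ and of the particular driving functions. Here is where Lemma~\ref{L:diamE} enters: by the scaling and stationary properties, $h_u - \xi(u)$ (which is essentially $\gr_{t-u}(t)$ up to the normalization, run in reverse) lies in $\sqrt{u}\,E_1$ for the $\sigma = 1$ regime, and since $E_1$ is a compact subset of $\bbH$ bounded away from $\bbR$, one gets $|h_u - \xi(u)| \asymp \sqrt{u}$ and likewise for the tilde flow. Hence the prefactor is $\lesssim 1/u$, which is \emph{not} integrable at $0$ — so a naive Gronwall argument fails, and this is the main obstacle. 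To get around it I would separate the two factors: $|h_u - \xi(u)| \gtrsim \sqrt{u}$ gives one factor $\lesssim u^{-1/2}$, and for the other I would keep $|\widetilde h_u - \widetilde\xi(u)| \gtrsim \sqrt{u}$ but exploit that near $u = 0$ both flows start from nearly the same point, so $|\phi_u|$ itself is small there; more precisely I would show directly (again via $\dot B_u \le 4$ type bounds) that $|\phi_u| \lesssim \sqrt{u}\,\Vert\gl-\glt\Vert_\infty + (\text{lower order})$ for small $u$, absorbing the $u^{-1}$ singularity.

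Concretely, I expect the cleanest route is a bootstrap: first prove the crude bound $|\phi_u| \le C\sqrt{u}$ for $u$ in a small initial interval $[0,u_0]$ with $u_0$ absolute (using that $\Re$ of each flow stays trapped between $\min$ and $\max$ of the driving function, exactly as in Lemma~\ref{L:diamE}, and that $\Im$ of each stays in a fixed compact range), then on $[u_0, t]$ the prefactor is bounded by an absolute constant and Gronwall closes the argument on that interval, yielding $|\phi_t| \lesssim \Vert\gl - \glt\Vert_\infty$ with the constant from $e^{C(t-u_0)} \le e^{CT}$ — and then the scaling property (Fact~(a)) removes the $T$-dependence by reducing to $T = 1$. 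Letting $y \downarrow 0$ and taking the supremum over $t$ finishes the proof. The one delicate point to handle carefully is making the small-$u$ estimate on $|\phi_u|$ uniform in $y$; I would do this by tracking $\Re\phi_u$ and $\Im\phi_u$ separately, noting $\Im\phi_u$ is controlled because $\Im h_u, \Im\widetilde h_u$ both lie in an interval like $[y, \sqrt{4+y}\,]$ that collapses as $y \to 0$, while $\Re\phi_u$ satisfies its own linear differential inequality with the same integrable-after-splitting kernel.
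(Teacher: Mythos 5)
Your setup is the same as the paper's: subtracting the two backward flows gives an inhomogeneous linear ODE $\dot\phi_u = A(u)\bigl[\phi_u - (\xi(u)-\widetilde\xi(u))\bigr]$ with $A(u) = \frac{2}{(h_u-\xi(u))(\widetilde h_u-\widetilde\xi(u))}$, and you correctly identify the obstacle that $|A(u)| \asymp 1/u$ is not integrable near $u=0$. But the bootstrap you propose to overcome it has a real gap, in two places.

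First, the initial-interval bound. To make Gronwall on $[u_0,t]$ yield $|\phi_t| \lesssim \gve := \Vert\gl-\glt\Vert_\infty$ you would need $|\phi_{u_0}| \lesssim \gve$ at an \emph{absolute} time $u_0 > 0$. The ``$\dot B_u \le 4$ type'' estimates from Lemma~\ref{L:diamE} control each flow individually and give $\Im h_u, \Im\widetilde h_u \in \bigl[y,\sqrt{4u+y^2}\,\bigr]$; this yields $|\phi_u| \lesssim \sqrt{u}$, but with no $\gve$ factor whatsoever. Nothing in those crude bounds prevents the two trajectories from separating by $O(\sqrt{u})$ even when $\gve$ is tiny; ruling that out is precisely the content of the theorem (applied on a shorter window), so the bootstrap is circular. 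Second, even granting $|\phi_u| \lesssim \gve$ for $u \le u_0$ by fiat, the Duhamel representation still contains the inhomogeneous term $\int_0^{u_0} |A(s)|\,\gve\,ds$, which diverges if all you retain about $A$ is $|A(s)| \lesssim 1/s$. No modulus-only Gronwall/bootstrap argument can recover from this.

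The missing idea is that $A(u)$ is not merely of size $1/u$ but has a definite \emph{phase}: by Lemma~\ref{L:diamE}, $A(u) \in \frac{1}{u}K$ where $K = \{2/(z_1z_2) : z_1,z_2 \in E_1\}$ is a compact subset of the open left half-plane, so $-\Re A(u) \ge \gb/u$ for an absolute $\gb > 0$. The paper exploits this by solving the linear ODE exactly with the integrating factor $\mu(u) = \exp\bigl(-\int_0^u A\bigr)$: then $\bigl|\mu(s)/\mu(1)\bigr| \le (s+\gd)^{\gb}$ decays polynomially as $s \to 0$, and the combined kernel $\bigl|\mu(s)/\mu(1)\bigr|\,|A(s)| \lesssim (s+\gd)^{\gb-1}$ \emph{is} integrable with a bound uniform in $\gd$. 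That cancellation between the decay of the integrating factor and the blow-up of $|A|$ is what makes the argument close, and it is invisible if one passes to moduli before solving the ODE. You do invoke Lemma~\ref{L:diamE}, but only for the size estimate $|h_u - \xi(u)| \asymp \sqrt{u}$; the part you need is the sign of $\Re\bigl[2/(z_1z_2)\bigr]$ for $z_1,z_2 \in E_1$.
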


Fix any $T > 0$. Let $\widetilde{X}_{\gs}$ be the space of all (continuous) $\gl \colon [0,T] \to \bbR$
with $\LipHalfNorm \leq \gs$. Recently, Joan Lind, Don Marshall and Steffen Rohde proved \cite[Theorem~4.1]{LMR}
that the map $\gl \mapsto \gr^{\gl}$ is a continuous map from $(\widetilde{X}_{\gs}, \Vert \cdot \Vert_{\infty})$
into $(C([0,T]),\Vert \cdot \Vert_{\infty})$ for every $0 \leq \gs < 4$. Their proof uses the theory of quasi-conformal
maps. When $\gs \leq 1$, Theorem~\ref{Thm:mod_cont} says the map is Lipschitz continuous. 

For $\gs = 1$, the slit $\gr^{\gl}$ of $\gl \in X_{\gs}$ is contained in the cone $V = \{ z \in \bbH \colon \frac{\pi}{4} <
\arg(z) < \frac{3 \pi}{4} \}$. Theorem~\ref{Thm:mod_cont} remains true (with a larger
absolute constant $c$) if the constant 1 is replaced by a slightly larger number where
the slit is still contained in $V$. We do not know whether Theorem~\ref{Thm:mod_cont} holds
for $\gs = 4 - \gve$ when $\gve > 0$ is small.

\begin{proof}
  By scaling we can assume $T = 1$. Let $\gve := \sup_{0 \leq t \leq 1} \left| \gl(t) - \widetilde{\gl}(t) \right|$. It also
  loses no generality to assume $\gl(1) = \widetilde{\gl}(1)$. (If not, translate one of the slits by
  $\gl(1) - \widetilde{\gl}(1)$, which has absolute value at most $\gve$.) We extend $\gl$ so that $\gl(t) = \gl(1)$
  for all $t \geq 1$. Fix any small $\gd > 0$. The tip $\gr^{\gl}(1+\gd)$ is equal to $h_1$, where $h_t \colon [0,1] \to \bbC$
  is the solution of the backward Loewner differential equation
  \[
      \left\{
      \begin{aligned}
        \partial_t h_t &= - \frac{2}{h_t - \xi(t)} \\
        h_0 &= \xi(0) + 2i \sqrt{\gd}
      \end{aligned}
      \right.    
  \]
  and $\xi(t) := \gl(1-t)$. Similarly, we extend $\widetilde{\gl}$, define $\widetilde{\xi}(t)$, $\widetilde{h}_t$
  and let $Y(t) = h_t - \widetilde{h}_t$. Note that $Y(1) = \gr^{\gl}(1+\gd) - \gr^{\widetilde{\gl}}(1+\gd)$ and
  $Y(0) = 0$ since $\gl(1) = \widetilde{\gl}(1)$. By direct computation,
  \begin{equation} \label{DE:dh_t}
      \partial_t Y(t) = A(t) \left[ Y(t) + \left(\widetilde{\xi}(t) - \xi(t)\right) \right],
  \end{equation}
  where $A(t) = 2 (h_t - \xi(t))^{-1} (\widetilde{h}_t - \widetilde{\xi}(t))^{-1}$. We view (\ref{DE:dh_t}) as
  a first order linear ODE in $Y(t)$ and solve it using the method of integrating factor. Let $\mu(t) =
  \exp \left( - \int_0^t A(s) \, ds \right)$. One has $\frac{d}{dt} \left[ \mu(t) Y(t) \right] = \mu(t) A(t)
  \left(\widetilde{\xi}(t) - \xi(t)\right)$ and
  \[
      Y(1) = \int_0^1 \frac{\mu(s)}{\mu(1)} A(s) \left(\widetilde{\xi}(s) - \xi(s)\right) \, ds.
  \]
  We know $\left|\widetilde{\xi}(s) - \xi(s)\right| \leq \gve$ for all $s \in [0,1]$. To complete the proof,
  it remains to estimate the size of the integrating factor $\mu(t)$.    
  
  Notice that $A(t) \in \frac{1}{t+\gd} K$ for all $t \in [0,1]$, where
  \[
      K = \left\{ \frac{2}{z_1 z_2} \colon z_1, z_2 \in E_1 \right\}
  \]
  and $E_1$ is the compact set defined right before Lemma~\ref{L:diamE} (see Figure~\ref{Fig:diamE}).
  For convenience of the readers, we recall the definition
  \[
      E_1 := \left\{ \gr^{\gl}(1) \colon \gl(0) = 0 \text{ and } \Vert \gl \Vert_{\Lip(\half,[0,1])} \leq 1 \right\}.
  \]  
  By Lemma~\ref{L:diamE}, $K$ is contained in the left half-plane $\{ z \in \bbC \colon
  \Re(z) < 0 \}$. Let
  \[
      \gb = \inf \{ - \Re(z) \colon z \in K \} > 0.
  \]
  Since $A(t) \in \frac{1}{t+\gd} K$, we have $- \Re A(t) \geq \frac{\gb}{t + \gd}$ for all $t \in [0,1]$. For any
  $s \in [0,1]$,
  \[
      \frac{\mu(s)}{\mu(1)} = \exp \left[ \int_s^1 A(u) \, du \right] \qquad \text{and} \qquad
      \left| \frac{\mu(s)}{\mu(1)} \right| \leq (s + \gd)^{\gb}.
  \]
  Finally,
  \[
       \left| \gr^{\gl}(1+\gd) - \gr^{\widetilde{\gl}}(1+\gd) \right| =
        \left| Y(1) \right| \leq \gve \int_0^1 \left| \frac{\mu(s)}{\mu(1)} \right| \cdot \left| A(s) \right| \, ds
        \leq c \, \gve \int_0^1 (s+\gd)^{-1+\gb}  \, ds.
  \]
  where $c = \sup_{z \in K} \left| z \right| < \infty$ is an absolute constant. The result
  follows by letting $\gd \to 0$.
\end{proof}

\section{When $\gl \in \Lip(\half+\gd)$ with $0 < \gd \leq \half$} \label{S:Lip(1/2+delta)}

In this section, $\gl \colon [0,T] \to \bbR$ satisfies the \MTdeltaCondition, i.e.\ $\LipHalfNorm
\leq 1$ and
\[
      \left| \gl(t_1) - \gl(t_2) \right| \leq M \left| t_1 - t_2 \right|^{\half + \gd}
\]
for all $t_1$,~$t_2 \in [0,T]$, where $M$, $T$,~$\gd > 0$. The extra smoothness
allows us to improve the exponent in Lemma~\ref{L:f'_Lip(1/2)}.

\begin{lemma} \label{L:f'_Lip(1/2+delta)}
  If $\gl \colon [0,T] \to \bbR$ satisfies the \MTdeltaCondition\ for some $0 < \gd \leq \half$,
  then for any $0 < s < t \leq T$,
  \begin{equation} \label{I:g'_Lip(1/2+delta)}
      \frac{1}{C} \leq \sqrt{\frac{t-s}{t}} \left| g_s'(\gr(t)) \right| \leq C,
  \end{equation}
  where $C = C(M, T, \gd) > 0$. Moreover, for all $s \in (0,T)$, the limit
  \[
      \lim_{\gve \downarrow 0} \sqrt{\gve} g_s'(\gr(s+\gve)) = \sqrt{s}
      \exp \left\{ - \int_0^s \left[ \frac{1}{2u} + \frac{2}{\gr(s-u,s)^2}
      \right] du \right\}
  \]
  exists and is nonzero.
\end{lemma}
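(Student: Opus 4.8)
The plan is to deduce \emph{both} assertions from one quantitative estimate on how fast $\tau(u,t)$ approaches $2i$ as $t-u\to 0$:
\begin{equation}\label{E:tauClose}
  \left| \half+\frac{2}{\tau(u,t)^2} \right| \le c\,\min\{1,\,M(t-u)^{\gd}\}\qquad(0\le u<t\le T),
\end{equation}
with $c>0$ absolute. First I would record, exactly as in the derivation of (\ref{E:log(g')_1}) in Lemma~\ref{L:f'_Lip(1/2)} but with a general upper limit $s$, the identity
\[
  \log g_s'(\gr(t)) - \half\log\frac{t}{t-s} = -\int_0^s\left[\half+\frac{2}{\tau(u,t)^2}\right]\frac{du}{t-u},
\]
where $\log g_s'$ is the branch furnished by (\ref{E:logg'}). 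Granting (\ref{E:tauClose}), the right-hand side is bounded in modulus by $c\int_0^s\frac{\min\{1,M(t-u)^{\gd}\}}{t-u}\,du\le c\int_0^T\frac{\min\{1,Mv^{\gd}\}}{v}\,dv=:C_0(M,T,\gd)$, and this last integral converges at $v=0$ \emph{because} $\gd>0$. Taking real parts and exponentiating then gives (\ref{I:g'_Lip(1/2+delta)}) with $C=e^{C_0}$.

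To prove (\ref{E:tauClose}) I would reduce to the unit interval. Using the stationary property, translation invariance of the Loewner equation, and Brownian scaling, one identifies $\tau(u,t)=\gr^{\widehat{\gl}}(1)$ with $\widehat{\gl}(r):=(t-u)^{-\half}\bigl[\gl(u+r(t-u))-\gl(u)\bigr]$ on $[0,1]$. The role of the extra regularity is visible here: $\widehat{\gl}(0)=0$, and combining $\Vert\gl\Vert_{\Lip(\half)}\le1$ with the $\Lip(\half+\gd)$ bound gives $\Vert\widehat{\gl}\Vert_{\Lip(\half,[0,1])}\le\sigma':=\min\{1,M(t-u)^{\gd}\}$, i.e.\ Brownian scaling converts the global $\Lip(\half+\gd)$ constant $M$ into a \emph{small} $\Lip(\half)$ constant on $[0,1]$ once $t-u$ is small. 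Hence $\tau(u,t)\in E_{\sigma'}\subseteq E_1$, and Lemma~\ref{L:diamE} yields $|\Re\tau(u,t)|\le\sigma'$ and $(\Im\tau(u,t))^2\in[4-(\sigma')^2,4]$, which a one-line computation turns into $|\tau(u,t)-2i|\le c\,\sigma'$. Since $\phi(z):=\half+2/z^2$ vanishes at $2i$ and is Lipschitz on any fixed compact convex subset $K^*\subset\bbH$ containing $E_1\cup\{2i\}$ (e.g.\ their convex hull; by convexity of $\bbH$ the segment from $\tau(u,t)$ to $2i$ lies in $K^*$), we get $|\phi(\tau(u,t))|\le c\,\sigma'$, which is (\ref{E:tauClose}).

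For the limit statement, set $t=s+\gve$; the displayed identity becomes
\[
  \sqrt{\gve}\,g_s'(\gr(s+\gve)) = \sqrt{s+\gve}\,\exp\left\{-\int_0^s\left[\half+\frac{2}{\tau(u,s+\gve)^2}\right]\frac{du}{s+\gve-u}\right\}.
\]
For each fixed $u\in(0,s)$, $\gr(s)$ is an interior point of $H_u$ (as $\gr$ is simple), so $\tau(u,s+\gve)\to\tau(u,s)$ as $\gve\downarrow0$ by continuity of $\gr$ and of $g_u$; meanwhile (\ref{E:tauClose}) supplies the $\gve$-uniform dominating function $u\mapsto cM(s-u)^{\gd-1}\in L^1([0,s])$ (integrable since $\gd>0$). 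Dominated convergence then gives
\[
  \lim_{\gve\downarrow0}\int_0^s\left[\half+\frac{2}{\tau(u,s+\gve)^2}\right]\frac{du}{s+\gve-u}
  = \int_0^s\left[\half+\frac{2}{\tau(u,s)^2}\right]\frac{du}{s-u}
  = \int_0^s\left[\frac{1}{2u}+\frac{2}{\gr(s-u,s)^2}\right]du,
\]
the last step by writing $\half+2/\tau(u,s)^2=\half+2(s-u)/\gr(u,s)^2$ and substituting $u\mapsto s-u$. This final integral is finite (again by (\ref{E:tauClose})), so its exponential is nonzero, and since $\sqrt{s+\gve}\to\sqrt s$ the claimed limit exists and is nonzero.

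I expect essentially all the content to be in (\ref{E:tauClose}): the reduction to $[0,1]$ together with the bookkeeping of how $\Vert\widehat{\gl}\Vert_{\Lip(\half)}$ scales with $t-u$, fed into Lemma~\ref{L:diamE}. Everything afterwards — the integral bound and the dominated-convergence passage for the limit — is routine, the recurring subtlety being simply that $\gd>0$ is exactly what makes $\int_0^{\cdot}\min\{1,Mv^{\gd}\}\,v^{-1}\,dv$ converge, hence what upgrades the ``$O(\gs)$ in the exponent'' bound of Lemma~\ref{L:f'_Lip(1/2)} to the genuinely bounded estimate (\ref{I:g'_Lip(1/2+delta)}).
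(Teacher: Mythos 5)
Your proposal is correct and follows essentially the same route as the paper: the identity from the proof of Lemma~\ref{L:f'_Lip(1/2)}, the pointwise bound on $\left| \half + 2/\tau(u,t)^2 \right|$ via Lemma~\ref{L:diamE} together with Brownian scaling of the $\Lip(\half+\gd)$ constant, integration of the resulting $O((t-u)^{\gd-1})$ kernel, and dominated convergence for the $\gve\downarrow 0$ limit. Your version is somewhat more explicit (the $\min\{1,M(t-u)^{\gd}\}$ refinement, the statement of the dominating function) but these are presentational rather than mathematical differences.
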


\begin{proof}
  As in the proof of Lemma~\ref{L:f'_Lip(1/2)},
  \[
      \log g_s'(\gr(t)) - \half \log \frac{t}{t-s} = - \int_0^s \left[ \half + \frac{2}{\tau(u,t)^2}
      \right] \frac{du}{t-u}.
  \]   
  The \MTdeltaCondition\ implies $\Vert \gl \Vert_{\Lip(\half,[u,t])} \leq M (t-u)^{\gd}$.
  Lemma~\ref{L:diamE} gives an estimate of our integral kernel:
  \[
      \left| \half + \frac{2}{\tau(u,t)^2} \right| \leq c \left| \tau(u,t) - 2i \right| \leq c M (t-u)^{\gd}
  \]
  for some absolute constant $c > 0$. (Again, we have implicitly used the condition $\LipHalfNorm \leq 1$,
  which guarantees that $\tau^{\gl}(u,t)$ stays in a fixed compact set $E_1 \subseteq \bbH$.) We have proved
  \[
      \left| \log g_s'(\gr(t)) - \half \log \frac{t}{t-s} \right| \leq cM \int_0^s (t-u)^{\gd-1} \, du = \frac{cM}{\gd}
      \left( t^{\gd} - (t-s)^{\gd} \right) \leq \frac{c M s^{\gd}}{\gd}
  \]
  and (\ref{I:g'_Lip(1/2+delta)}) follows. Taking $t = s + \gve$ with $\gve > 0$ gives
  \[
      \log \frac{\sqrt{\gve} g_s'(\gr(s+\gve))}{\sqrt{s+\gve}} = - \int_0^s
      \left[ \frac{1}{2} + \frac{2}{\tau(u,s+\gve)^2} \right] \frac{du}{s+\gve-u}.
  \]
  The existence of $\lim_{\gve \downarrow 0} \sqrt{\gve} g_s'(\gr(s+\gve))$ follows from the
  Lebesgue dominated convergence theorem.
\end{proof}

\begin{lemma} \label{L:diamK}
  Let $\gl_1$,~$\gl_2 \colon [0,T]  \to \bbR$ satisfy  the \MTdeltaCondition\ for some
  $0 < \gd \leq \half$. Suppose $\gl_1 = \gl_2$ on
  $[0,s]$ for some $s \in (0,T)$. Then, for any $\gve \in (0, T-s]$,
  \[
      \left| \gr^{\gl_1}(s+\gve) - \gr^{\gl_2}(s+\gve) \right| \leq C \gve^{1+\gd},
  \]
  where $C = C(M,T,\gd,s) > 0$.
\end{lemma}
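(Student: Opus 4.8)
The plan is to compare the two slits at time $s+\gve$ by tracking the difference of the conformal maps under the backward (reverse) Loewner flow, exactly as in the proof of Theorem~\ref{Thm:mod_cont}, but now exploiting that the driving functions agree on $[0,s]$. First I would use the scaling and stationary properties together with the representation of the tip as a limit under the reverse flow: since $\gl_1 = \gl_2$ on $[0,s]$, the maps $g_s^{\gl_1}$ and $g_s^{\gl_2}$ coincide, so $\gr^{\gl_j}(s+\gve) = f_s(\gr^{\gl_j}_s(s+\gve)) $ where $\gr^{\gl_j}_s(u)$ is the slit driven by the time-shift $(\gl_j)_s$ on $[0,\gve]$. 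Thus it suffices to estimate $\left| \gr^{(\gl_1)_s}(\gve) - \gr^{(\gl_2)_s}(\gve) \right|$ and then apply the derivative bound for $f_s$ near $\gl(s)$. The time-shifted driving functions still satisfy the \MTdeltaCondition\ (on $[0,\gve]$), with the same $M$, and they agree at the initial point $u=0$.

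The heart of the argument is to bound $\left| \gr^{\glt_1}(\gve) - \gr^{\glt_2}(\gve) \right|$ where $\glt_j := (\gl_j)_s$. Here I would run the reverse-flow ODE argument of Theorem~\ref{Thm:mod_cont} on the interval $[0,\gve]$: writing $Y(t) = h_t^{\glt_1} - h_t^{\glt_2}$ with $h_0 = \xi(0) + 2i\sqrt{\eta}$ (taking $\eta\downarrow 0$ at the end), one gets the linear ODE $\partial_t Y = A(t)\left[Y(t) + (\widetilde\xi_2(t) - \widetilde\xi_1(t))\right]$ with $A(t) \in \frac{1}{t+\eta} K$, and the integrating-factor estimate gives $\left| Y(\gve) \right| \lesssim \int_0^{\gve} (t+\eta)^{-1+\gb} \left| \widetilde\xi_1(t) - \widetilde\xi_2(t)\right| dt$. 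The new input, compared with Theorem~\ref{Thm:mod_cont}, is that $\widetilde\xi_1 - \widetilde\xi_2$ is not just bounded but vanishes at one endpoint: since both $\glt_j$ satisfy the \MTdeltaCondition\ on $[0,\gve]$ and $\glt_1(0) = \glt_2(0)$ (both equal $\gl(s)$), we have $\left| \widetilde\xi_1(t) - \widetilde\xi_2(t)\right| \leq \left|\glt_1(\gve-t) - \glt_1(0)\right| + \left|\glt_2(\gve-t) - \glt_2(0)\right| \leq 2M(\gve-t)^{\half+\gd} \leq 2M\gve^{\half+\gd}$. Plugging this crude bound in already yields $\left| Y(\gve)\right| \lesssim M\gve^{\half+\gd}\int_0^\gve t^{-1+\gb}\,dt \lesssim \gve^{\half+\gd}$, which is not quite $\gve^{1+\gd}$; to reach the stated exponent I would instead keep the factor $(\gve-t)^{\half+\gd}$ inside the integral and use $\int_0^\gve (t+\eta)^{-1+\gb}(\gve-t)^{\half+\gd}\,dt \asymp \gve^{\gb-\half+\gd}$ — but this requires $\gb > \half$, which is not guaranteed. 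So more care is needed: one should not bound $\left|\widetilde\xi_1 - \widetilde\xi_2\right|$ by the sum of the oscillations, but rather observe that near $t=\gve$ (equivalently near $u=0$ in the original parametrization) both slits are at essentially the same point since they have the same initial data, so the difference of the \emph{reverse flows} started from the common point $2i\sqrt\eta$ accumulates slowly; concretely, one should bootstrap the estimate on $[0,\gve]$ by splitting the integral at $t=\gve/2$ and using the already-established $O(\gve^{\half+\gd})$ bound for $Y$ on $[0,\gve/2]$ as a better control than the raw $\gve^{\half+\gd}$ on the whole interval.

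Rather than this bootstrap, the cleaner route — and the one I would actually pursue — is to work on the original (unshifted) time interval but start the comparison at time $s$: set $Y(t) = g_t^{\gl_1}(\gr^{\gl_1}(s+\gve)) - g_t^{\gl_2}(\gr^{\gl_2}(s+\gve))$ run forward from $t=s$ to $t=s+\gve$, so that $Y(s)=0$ (the maps agree at time $s$) and $Y(s+\gve) = \gl_1(s+\gve) - \gl_2(s+\gve) - \big(\gr^{\gl_1}_s(s+\gve) - \gr^{\gl_2}_s(s+\gve)\big)$ is related to the quantity we want. Differentiating the Loewner equation gives $\partial_t Y(t) = -\widetilde A(t)\big[Y(t) - (\gl_1(t)-\gl_2(t))\big]$ with $\widetilde A(t) = 2\big[(g_t^{\gl_1}(\cdot)-\gl_1(t))(g_t^{\gl_2}(\cdot)-\gl_2(t))\big]^{-1}$, and by Lemma~\ref{L:f'_Lip(1/2+delta)} (applied to estimate the size of $g_t^{\gl_j}(\gr^{\gl_j}(s+\gve)) - \gl_j(t)$ from below via $\asymp \sqrt{(s+\gve-t)(s+\gve)}/\sqrt t$... actually one needs the lower bound $\gtrsim\sqrt{s+\gve-t}$, which follows from $\left|g_t'\right|\asymp\sqrt{(s+\gve)/(s+\gve-t)}$ and the derivative–distance relation) we get $\left|\widetilde A(t)\right| \lesssim \frac{1}{s+\gve-t}$ on $[s,s+\gve]$, with the crucial extra information that the singularity is integrable only logarithmically — so I would instead integrate over $[s, s+\gve]$ where $s+\gve-t$ ranges over $[0,\gve]$ but $t$ stays bounded away from $0$, giving $\left|\widetilde A(t)\right| \lesssim \frac{1}{s}\cdot\frac{s+\gve}{s+\gve-t}$; the integrating factor $\exp(\int_t^{s+\gve}\widetilde A)$ then has modulus $\lesssim \big(\tfrac{\gve}{s+\gve-t}\big)^{O(1)}$... this is getting delicate, and indeed \textbf{the main obstacle is precisely the non-integrable-looking $\frac{1}{s+\gve-t}$ kernel near $t = s+\gve$}, i.e. near the tip. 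The resolution, which I expect the author uses, is that the forcing term $\gl_1(t)-\gl_2(t)$ — unlike in Theorem~\ref{Thm:mod_cont} where it is merely bounded — here satisfies $\left|\gl_1(t)-\gl_2(t)\right| \leq \left|\gl_1(t)-\gl_1(s)\right| + \left|\gl_2(t)-\gl_2(s)\right| \leq 2M(t-s)^{\half+\gd}$, so the forcing vanishes to order $(t-s)^{\half+\gd}$ at the starting point $t=s$, and this $\half+\gd > \half$ power beats the logarithmic-type blowup of the integrating factor near the tip, yielding the $\gve^{1+\gd}$ bound after one integration: schematically $\left|Y(s+\gve)\right| \lesssim \int_s^{s+\gve}\left|\widetilde A(t)\right|(t-s)^{\half+\gd}\,dt \lesssim \frac{1}{s}\int_0^\gve r^{\half+\gd}\,dr \cdot(\text{bounded factor}) \lesssim_s \gve^{\frac32+\gd}$, and then converting back through the $C^{\gl_1}$–$C^{\gl_2}$ comparison and the bound on $f_s'$ near the boundary point absorbs the loss, giving $\left|\gr^{\gl_1}(s+\gve)-\gr^{\gl_2}(s+\gve)\right|\lesssim_{M,T,\gd,s}\gve^{1+\gd}$. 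I would present the argument in this last form, being careful to (i) invoke Lemma~\ref{L:f'_Lip(1/2+delta)} for the lower bound on $\left|g_t^{\gl_j}(\gr^{\gl_j}(s+\gve))-\gl_j(t)\right|$ on the whole interval $[s,s+\gve]$, (ii) use the $\Lip(\half+\gd)$ hypothesis to make the forcing term small near $t=s$, and (iii) bound the inverse-map derivative $\left|f_s'(\gl(s)+iy)\right|$ uniformly for small $y$ (again via Lemma~\ref{L:f'_Lip(1/2+delta)}) to pass from the half-plane estimate to the slit estimate.
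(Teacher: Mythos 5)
Your opening paragraph identifies exactly the right structure, and it is the paper's: since $\gl_1 = \gl_2$ on $[0,s]$ the two maps $g_s$ coincide, $\gr^{\gl_j}(s+\gve) = f_s\bigl(\gl(s) + \gr^{\gl_j}_s(s+\gve)\bigr)$, and one should estimate the distance between the shifted tips and then multiply by $\sup \left| f_s' \right|$. Where you go off the rails is in the very next step. You correctly compute that
$\left| \gr^{(\gl_1)_s}(\gve) - \gr^{(\gl_2)_s}(\gve) \right| \lesssim \gve^{\half+\gd}$
(the paper gets this even more directly from Lemma~\ref{L:diamE} applied to the time-shifted driving functions, without invoking Theorem~\ref{Thm:mod_cont}) and then declare this is ``not quite $\gve^{1+\gd}$'' and go hunting for a stronger bound on the shifted-tip distance. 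But you don't need $\gve^{1+\gd}$ at that stage; you need $\gve^{\half+\gd}$, which you have. The remaining factor $\sqrt{\gve}$ comes from $\left| f_s' \right| \lesssim \sqrt{\gve}$, which is inequality~(\ref{I:|f'|}) via Lemma~\ref{L:f'_Lip(1/2+delta)} and which you yourself listed as step (iii). Multiplying gives $\gve^{\half+\gd}\cdot\sqrt{\gve} = \gve^{1+\gd}$ and the proof is done. Having set up the two-factor argument, you never carried out the second factor; the entire middle of your writeup (the bootstrap, the $\gb>\half$ worry, the $\gve^{\half+\gd}$ ``shortfall'') is chasing a problem that does not exist.

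The ``cleaner route'' you then describe contains a genuine error. With $Y(t) = g_t^{\gl_1}\bigl(\gr^{\gl_1}(s+\gve)\bigr) - g_t^{\gl_2}\bigl(\gr^{\gl_2}(s+\gve)\bigr)$, the maps $g_s^{\gl_1}$ and $g_s^{\gl_2}$ do agree, but they are being applied to \emph{different} points; so $Y(s) = g_s\bigl(\gr^{\gl_1}(s+\gve)\bigr) - g_s\bigl(\gr^{\gl_2}(s+\gve)\bigr) = \gr^{\gl_1}_s(s+\gve) - \gr^{\gl_2}_s(s+\gve) \neq 0$. Your claim $Y(s)=0$ is false, and your formula for $Y(s+\gve)$ is also off (it is just $\gl_1(s+\gve)-\gl_2(s+\gve)$, since $g^{\gl_j}_{s+\gve}$ sends the tip to $\gl_j(s+\gve)$). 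The subsequent integrating-factor calculation is therefore not estimating the quantity you want. Also, the quantity to control is actually $Y(0) = \gr^{\gl_1}(s+\gve)-\gr^{\gl_2}(s+\gve)$, reached by running \emph{backward} from $t=s$ to $t=0$ with zero forcing (since $\gl_1=\gl_2$ there) and initial data $Y(s)$; done correctly, that backward step is nothing but the $\left| f_s' \right|$ bound in disguise, confirming that your first approach was already the right one. One small technical point you should add when finishing the first approach: the $\left| f_s' \right| \lesssim \sqrt\gve$ bound from Lemma~\ref{L:f'_Lip(1/2+delta)} is known at the shifted tips themselves, and to turn a diameter bound into a distance bound you must control $\left| f_s' \right|$ on the convex hull; the paper does this via Koebe distortion using the fact that the shifted tips lie in a set of uniformly bounded hyperbolic diameter.
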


\begin{proof}
  With $M$, $T$, $\gd$ fixed, let $X$ be the space of all functions $\gl \colon [0,T] \to \bbR$
  satisfying the \MTdeltaCondition\ and $\gl = \gl_1$ on $[0,s]$. For each $\gve \in (0,T-s]$,
  consider the compact set
  \[
      K_{\gve} = \left\{ \gr^{\gl}(s+\gve) \in \bbH \colon \gl \in X \right\}.
  \]
  
  \begin{figure}[H]
    \centerline{\includegraphics[scale=0.6]{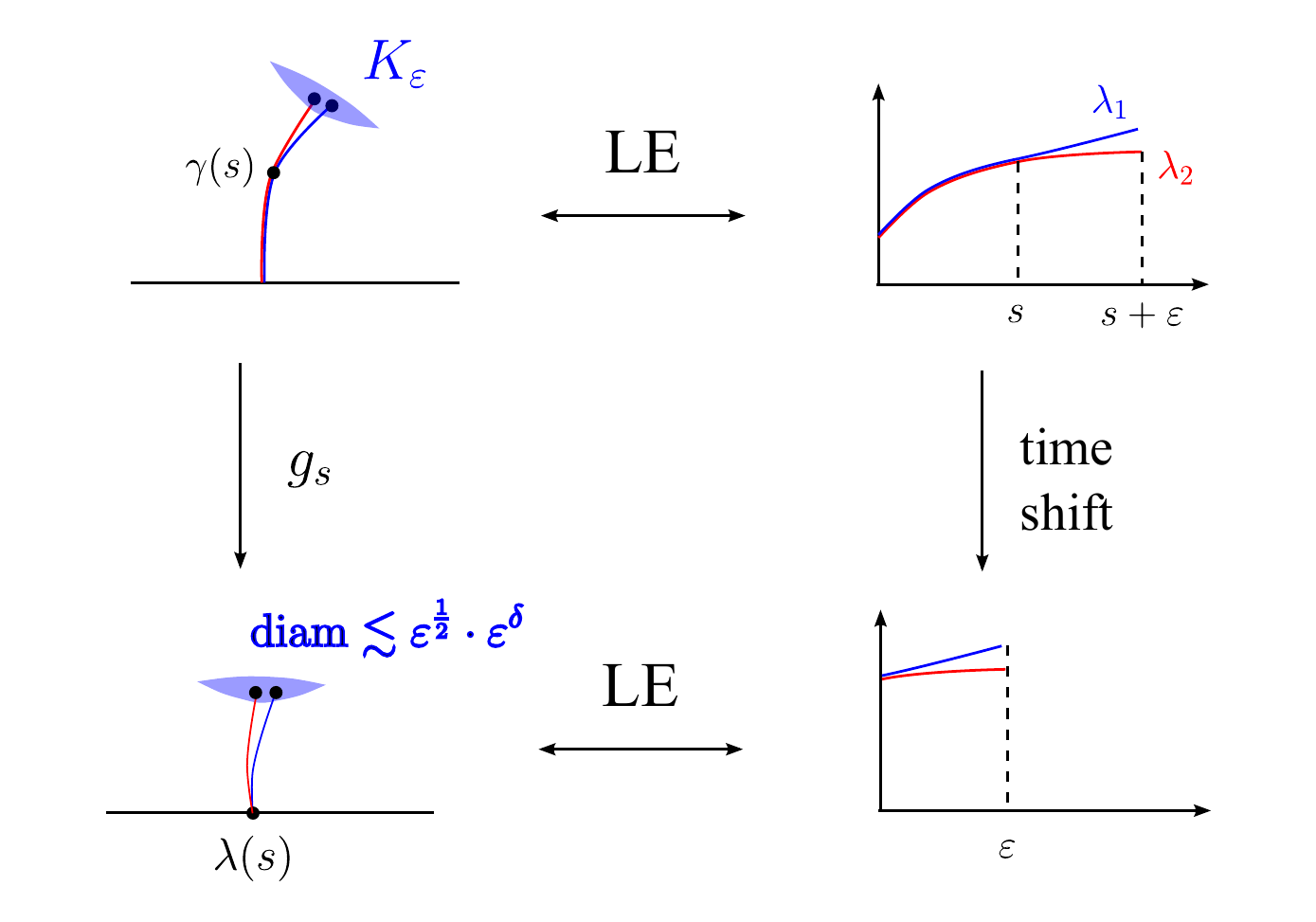}}
  \end{figure}
  
  It suffices to show $\diam(K_{\gve}) \leq C \gve^{1 + \gd}$.
  Let $g_s = g_s^{\gl_j}$ be the hydrodynamically normalized conformal map from
  $\bbH \setminus \gr^{\gl_j}([0,s])$ onto $\bbH$, and let $f_s = g_s^{-1}$. Note that
  \[
      \diam(K_{\gve}) \leq \diam(g_s(K_{\gve})) \cdot \sup_{z \in E} \left| f_s'(z) \right|,
  \]
  where $E$ is the convex hull of $g_s(K_{\gve})$. By Lemma~\ref{L:diamE},
  $\diam(g_s(K_{\gve})) \leq c M \gve^{\half + \gd}$. On the other hand,
  Lemma~\ref{L:f'_Lip(1/2+delta)} implies
  \begin{equation} \label{I:|f'|}
      \sup_{z \in g_s(K_{\gve})} \left| f_s'(z) \right| \leq C \sqrt{\gve}, 
  \end{equation}
  where $C = C(M,T,\gd,s) > 0$ does not depend on $\gve$. If we replace $g_s(K_{\gve})$ by its
  convex hull, the supremum in (\ref{I:|f'|}) can only increase by a bounded factor, by Koebe
  distortion theorem (see \cite{Pom}) and the fact that the hyperbolic diameter of $g_s({K_{\gve}})$
  is bounded above by some absolute constant $c_1$. Actually, we can take $c_1$ to be the
  hyperbolic diameter of the set $E_1$ in Figure~\ref{Fig:diamE}.
\end{proof}

\begin{corollary} \label{Cor:r'}
  Suppose $\gl \colon [0,T] \to \bbR$  satisfies the \MTdeltaCondition\ for some $0 < \gd \leq \half$,
  then $\gr = \gr^{\gl}$ is differentiable on $(0,T)$ and
  \begin{equation} \label{E:r'}
      \gr'(s) = \frac{i}{\sqrt{s}} \exp \left\{ \int_0^s \left[ \frac{1}{2u} + \frac{2}{\gr(s-u,s)^2} \right] du \right\}
  \end{equation}
  for all $s \in (0,T)$. At $s = T$, the left derivative $\gr_{-}'(T)$ exists and is given by the same formula.
\end{corollary}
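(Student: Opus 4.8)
The plan is to compute, for each fixed $s\in(0,T)$, the right derivative $\lim_{\gve\downarrow0}\gve^{-1}\bigl(\gr(s+\gve)-\gr(s)\bigr)$ by replacing $\gl$ near time $s$ with the driving function frozen at the value $\gl(s)$: the slit of the frozen function is an explicit image of a vertical segment under $f_s$, and the behaviour of $f_s'$ on that segment is exactly what Lemma~\ref{L:f'_Lip(1/2+delta)} controls. Concretely, I would fix $s$ and set $\glt=\gl$ on $[0,s]$ and $\glt\equiv\gl(s)$ on $[s,T]$. A one-line check --- the only new case being $t_1\le s<t_2$, where $|\glt(t_1)-\glt(t_2)|=|\gl(t_1)-\gl(s)|\le M(s-t_1)^{\half+\gd}\le M(t_2-t_1)^{\half+\gd}$, and likewise with exponent $\half$ --- shows $\glt$ again satisfies the \MTdeltaCondition. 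Since $\glt=\gl$ on $[0,s]$ we have $g^{\glt}_s=g^{\gl}_s=:g_s$, $f_s:=g_s^{-1}$, $\gr^{\glt}(s)=\gr(s)$, and $\gr^{\glt}(s-u,s)=\gr(s-u,s)$ for $0<u<s$. The time shift $\glt_s$ is the constant function $\gl(s)$, whose slit is the vertical segment with tip $\gl(s)+2i\sqrt\gve$ at time $\gve$; so by the stationary property $g_s\bigl(\gr^{\glt}(s+\gve)\bigr)=\gl(s)+2i\sqrt\gve$, i.e.\ $\gr^{\glt}(s+\gve)=f_s\bigl(\gl(s)+2i\sqrt\gve\bigr)$ for $0<\gve\le T-s$. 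Applying Lemma~\ref{L:diamK} to $\gl$ and $\glt$ (which agree on $[0,s]$) gives
\[
  \bigl|\gr(s+\gve)-f_s\bigl(\gl(s)+2i\sqrt\gve\bigr)\bigr|\le C\gve^{1+\gd}=o(\gve)\qquad(\gve\downarrow0),
\]
so it suffices to show that $\gve^{-1}\bigl(f_s(\gl(s)+2i\sqrt\gve)-\gr(s)\bigr)$ converges to the right-hand side of (\ref{E:r'}).

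For that I would read off the rate at which $f_s'$ vanishes toward the tip. The ``moreover'' part of Lemma~\ref{L:f'_Lip(1/2+delta)}, applied to $\glt$ at the point $s$ and using $\gr^{\glt}(s-u,s)=\gr(s-u,s)$ together with $\gr^{\glt}(s+\gve)=f_s(\gl(s)+2i\sqrt\gve)$, gives
\[
  \lim_{\gve\downarrow0}\sqrt\gve\,g_s'\bigl(f_s(\gl(s)+2i\sqrt\gve)\bigr)=\sqrt s\,\exp\Bigl\{-\int_0^s\Bigl[\tfrac1{2u}+\tfrac2{\gr(s-u,s)^2}\Bigr]du\Bigr\}.
\]
Since $g_s'\bigl(f_s(w)\bigr)=1/f_s'(w)$, setting $y=2\sqrt\gve$ this says
\[
  \lim_{y\downarrow0}\frac{f_s'(\gl(s)+iy)}{y}=\frac1{2\sqrt s}\,\exp\Bigl\{\int_0^s\Bigl[\tfrac1{2u}+\tfrac2{\gr(s-u,s)^2}\Bigr]du\Bigr\}=:A.
\]
In particular $f_s'(\gl(s)+iy)=O(y)$ near $0$, so $f_s'$ is integrable along the segment; since $f_s$ is analytic on $\bbH$ and $\gr(s)=\lim_{y\downarrow0}f_s(\gl(s)+iy)$ by the first equality in (\ref{E:r}),
\[
  f_s(\gl(s)+iy)-\gr(s)=\int_0^y i\,f_s'(\gl(s)+i\eta)\,d\eta=\tfrac{iA}2\,y^2+o(y^2).
\]
Putting $y=2\sqrt\gve$ gives $f_s(\gl(s)+2i\sqrt\gve)-\gr(s)=2iA\gve+o(\gve)$; combined with the comparison from the first paragraph, $\gr$ is right-differentiable at $s$ with right derivative $2iA$, which is precisely the right-hand side of (\ref{E:r'}).

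It remains to upgrade this to a two-sided derivative and to treat $s=T$. The map $s\mapsto 2iA(s)=\frac i{\sqrt s}\exp\bigl\{\int_0^s[\tfrac1{2u}+\tfrac2{\gr(s-u,s)^2}]du\bigr\}$ is continuous on $(0,T]$: writing the integrand as $u^{-1}\bigl[\half+2\tau(s-u,s)^{-2}\bigr]$, one has $\bigl|\half+2\tau(s-u,s)^{-2}\bigr|\le cMu^{\gd}$ (because $\Vert\gl\Vert_{\Lip(\half,[s-u,s])}\le Mu^{\gd}$ and, as in the proof of Lemma~\ref{L:f'_Lip(1/2+delta)}, $\tau$ stays in the compact set $E_1\subseteq\bbH$, so Lemma~\ref{L:diamE} applies), hence the integrand is dominated by $cMu^{\gd-1}$ uniformly in $s$ and depends continuously on $s$ by joint continuity of $(v,z)\mapsto g_v(z)$; dominated convergence gives continuity and shows $\int_0^T[\cdots]du<\infty$. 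By the standard fact that a continuous function on an interval whose right derivative exists everywhere and is continuous is $C^1$ with that derivative, $\gr$ is differentiable on $(0,T)$ and (\ref{E:r'}) holds there. Finally, $\gr$ is continuous on $[0,T]$ and $C^1$ on $(0,T)$ with $\gr'$ extending continuously to $T$, so $\gve^{-1}\bigl(\gr(T)-\gr(T-\gve)\bigr)=\gve^{-1}\int_{T-\gve}^T\gr'(s)\,ds$ tends to that continuous extension, giving $\gr_-'(T)$ by the same formula.

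The main obstacle is the coupling of the two middle ingredients: one needs the true slit to shadow the frozen vertical image-curve to order $o(\gve)$ --- this is Lemma~\ref{L:diamK}, where the hypothesis $\gd>0$ enters decisively, since for merely $\Lip(\half)$ driving functions the error is only $O(\gve)$, consistent with non-differentiability --- while simultaneously $f_s'$ must decay \emph{linearly} toward the tip with exactly the constant $A$ dictated by Lemma~\ref{L:f'_Lip(1/2+delta)}. It is precisely this linear decay of $f_s'$ that cancels the $\sqrt\gve$-behaviour of $\gr$ near the tip and makes $\gve\mapsto f_s(\gl(s)+2i\sqrt\gve)$ differentiable in $\gve$. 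The remaining points --- that $\glt$ inherits the hypotheses, the integrability of $f_s'$, and the passage from right derivative to $C^1$ --- are routine.
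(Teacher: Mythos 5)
Your proof is correct and follows essentially the same route as the paper: freeze the driving function at time $s$, invoke Lemma~\ref{L:diamK} to see this does not affect the right derivative, use Lemma~\ref{L:f'_Lip(1/2+delta)} together with the integral representation $\gr(s+\gve)-\gr(s)=\int_0^\gve i f_s'(\gl(s)+2i\sqrt u)\,u^{-1/2}\,du$ to identify the right derivative, and then pass to a two-sided derivative via continuity of the candidate formula. The only cosmetic difference is that you verify continuity of $L(s)$ directly by dominated convergence with the $cMu^{\gd-1}$ bound, whereas the paper cites Theorem~\ref{Thm:mod_cont}.
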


\begin{proof}
  It suffices to show that the right derivative $\gr_{+}'(s)$ exists for $s \in (0,T)$ and is given by (\ref{E:r'}).
  (Using Theorem~\ref{Thm:mod_cont}, it is not hard to see that (\ref{E:r'}) is continuous in $s$, and it is
  an exercise to show that any right differentiable function on an open interval with continuous right derivative
  is in fact differentiable. A proof of this elementary fact can be found in \cite[Lemma~4.3]{Law}.)
  
  Fix any $s \in (0,T)$. We may assume without loss of generality that $\gl(t) = \gl(s)$
  for all $t \in [s,T]$, because modifying $\gl$ this way does not change the right derivative $\gr_{+}'(s)$, by
  Lemma~\ref{L:diamK}. We have
  $\gr(s+\gve) = f_s(\gl(s) + 2i\sqrt{\gve})$ and therefore
  \[
      \gr(s+\gve) - \gr(s) = \int_0^{\gve} \frac{i f_s'(\gl(s) + 2i\sqrt{u})}{\sqrt{u}} \, du
  \]
  for all $\gve \in (0,T-s]$. By Lemma~\ref{L:f'_Lip(1/2+delta)}, the integrand is continuous at $u = 0$. It follows
  that $\gr_{+}'(s)$ exists and is given by
  \[
      \gr_{+}'(s) = \lim_{\gve \downarrow 0} \frac{i f_s'(\gl(s) + 2i \sqrt{\gve})}{\sqrt{\gve}} =
      \frac{i}{\sqrt{s}} \exp \left\{ \int_0^s \left[ \frac{1}{2u} + \frac{2}{\gr(s-u,s)^2} \right] du \right\}.
  \]
\end{proof}

By formula (\ref{E:r'}), proving the smoothness of $\gr$ is equivalent to proving the smoothness of the integral,
which we call $L(s)$ from now on.

\begin{notation}
  Let
  \[
      L(s) = L^{\gl}(s) = \int_0^s \left[ \frac{1}{2u} + \frac{2}{\gr(s-u,s)^2} \right] du.
  \]
  This integral makes sense provided that $\gl \colon [0,T] \to \bbR$ satisfies the \MTdeltaCondition\ for some
  $0 < \gd \leq \half$. In the coming sections, when we impose more
  regularity assumptions on $\gl$, we will keep this notation.
\end{notation}

In the proof of Lemma~\ref{L:f'_Lip(1/2+delta)}, we have implicitly proved an upper bound of $\left| L(s) \right|$.
By (\ref{E:r'}), controlling the size of $\left| L(s) \right|$ gives an upper bound of $\left| \gr'(s) - \frac{i}{\sqrt{s}} \right|$.
This estimate will be useful later and we now explicitly state it.

\begin{lemma} \label{L:|L(s)|}
  Under the \MTdeltaCondition\ with $0 < \gd \leq \half$,
  \[
      \left| L(s) \right| \leq \frac{cMs^{\gd}}{\gd}
  \]
  for all $s \in [0,T]$, where $c > 0$ is an absolute constant. 
\end{lemma}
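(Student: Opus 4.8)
The plan is to recycle the estimate already obtained inside the proof of Lemma~\ref{L:f'_Lip(1/2+delta)}. Recall that there we showed
\[
    \log g_s'(\gr(t)) - \half \log \frac{t}{t-s} = - \int_0^s \left[ \half + \frac{2}{\tau(u,t)^2} \right] \frac{du}{t-u},
\]
and that the integrand is bounded by $c M (t-u)^{\gd-1}$ using Lemma~\ref{L:diamE} together with the condition $\LipHalfNorm \leq 1$ (which pins $\tau(u,t)$ to the fixed compact set $E_1$). Specializing to $t = s$ is exactly what defines $L(s)$: changing variables $u \mapsto s-u$ in the Notation box turns $L(s) = \int_0^s \left[ \frac{1}{2u} + \frac{2}{\gr(s-u,s)^2} \right] du$ into $\int_0^s \left[ \half + \frac{2}{\tau(u,s)^2} \right] \frac{du}{s-u}$, since $\gr(s-u,s) = \tau(s-u,s)\sqrt{u}$ and a relabeling $u \leftrightarrow s-u$ matches the two forms. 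So $L(s)$ is literally the $t \downarrow s$ limit of the right-hand side above (the limit exists by dominated convergence, as noted in Corollary~\ref{Cor:r'}).

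First I would record this identity
\[
    L(s) = \int_0^s \left[ \half + \frac{2}{\tau(u,s)^2} \right] \frac{du}{s-u},
\]
being slightly careful that this improper integral converges: near $u = s$ the bracket is $O((s-u)^{\gd})$ by the Lip($\half+\gd$) bound applied on the shrinking interval $[u,s]$ (giving $\Vert \gl \Vert_{\Lip(\half,[u,s])} \leq M(s-u)^{\gd}$, hence $|\tau(u,s) - 2i| \lesssim M(s-u)^{\gd}$), so the singularity $(s-u)^{-1}$ is integrable. Then I would apply Lemma~\ref{L:diamE} in the same Lipschitz form used in Lemma~\ref{L:f'_Lip(1/2+delta)}:
\[
    \left| \half + \frac{2}{\tau(u,s)^2} \right| \leq c \, |\tau(u,s) - 2i| \leq c M (s-u)^{\gd},
\]
where the first inequality is the derivative bound of $z \mapsto 2/z^2$ on $E_1$ and the second is Lemma~\ref{L:diamE} applied to the time-shifted, rescaled driving function on $[u,s]$.

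Finally I would integrate:
\[
    |L(s)| \leq \int_0^s c M (s-u)^{\gd} \, \frac{du}{s-u} = c M \int_0^s (s-u)^{\gd-1} \, du = \frac{c M s^{\gd}}{\gd},
\]
which is the claimed bound; the case $s = 0$ is trivial. There is essentially no obstacle here — the lemma is a bookkeeping restatement of work already done — the only point requiring a word of care is the convergence of the improper integral at the endpoint $u = s$, which is handled by exactly the same Lip($\half+\gd$)-on-a-small-interval argument that produced the kernel bound in the first place.
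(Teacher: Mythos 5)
Your proof is correct and follows essentially the same route as the paper: the paper explicitly notes that the bound on $\left| L(s) \right|$ was already proved implicitly inside Lemma~\ref{L:f'_Lip(1/2+delta)}, via exactly the kernel estimate $\left| \half + \frac{2}{\tau(u,s)^2} \right| \leq c\left|\tau(u,s) - 2i\right| \leq cM(s-u)^{\gd}$ followed by integration. Your change of variables making the identity $L(s) = \int_0^s \left[ \half + \frac{2}{\tau(u,s)^2} \right] \frac{du}{s-u}$ explicit, and your remark on convergence at the endpoint, are just a cleaner packaging of the same argument.
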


We will show that $L \in \Lip(\gd)$. For any $s \in (0,T)$ and $\gve \in [0,T-s]$,
\begin{equation} \label{E:deltaL}
      \begin{aligned}
        L(s+\gve) - L(s)  
        = &\int_0^s \left[ \frac{2}{\tau(s+\gve-u,s+\gve)^2} - \frac{2}{\tau(s-u,s)^2} \right] \frac{du}{u} +\\
        &\int_s^{s+\gve} \left[ \half + \frac{2}{\tau(s+\gve-u,s+\gve)^2}  \right] \frac{du}{u}. \\
      \end{aligned}
\end{equation}
Since $\Vert \gl \Vert_{\Lip(\half,[a,b])} \leq M(b-a)^{\gd}$, Lemma~\ref{L:diamE} gives $\left| \tau(s+\gve-u,s+\gve)
- 2i \right| \leq c M u^{\gd}$ and therefore
\begin{equation} \label{I:deltaL}
    \int_s^{s+\gve} \left| \half + \frac{2}{\tau(s+\gve-u,s+\gve)^2}  \right| \, \frac{du}{u} \leq \int_s^{s+\gve} c M u^{\gd-1}
    \, du \leq \frac{cM}{\gd} \gve^{\gd}.
\end{equation} 
The second integral in (\ref{E:deltaL}) is under control. The first integral can be estimated using the quantity
\begin{equation} \label{Def:w}
      \go(s,u,\gve) := \sup_{0 \leq v \leq u} \left| \gl(s+\gve-v) - \gl(s+\gve - u) - \gl(s-v) + \gl(s-u) \right|.
\end{equation}
Note that $\go(s,u,\gve)$ can be expressed as $\Vert \gl_1 - \gl_2 \Vert_{\infty}$, where
$\gl_1$, $\gl_2 \colon [0,u] \to \bbR$ are driving functions whose tips are $\gr(s+\gve-u,s+\gve)$ and $\gr(s-u,s)$.
Theorem~\ref{Thm:mod_cont} implies
\[
    \left| \gr(s+\gve-u,s+\gve) - \gr(s-u,s) \right| \leq c \, \go(s,u,\gve)
\]
for some absolute constant $c > 0$. This gives an estimate of the first integral in (\ref{E:deltaL}). We have proved
the following lemma.

\begin{lemma} \label{L:deltaL}
  If $\gl \colon [0,T] \to \bbR$ satisfies the \MTdeltaCondition\ for some $0 < \gd \leq \half$, then for any $0 \leq s <
  s+\gve \leq T$,
  \begin{equation} \label{I:deltaL}
    \begin{aligned}
      \left| L(s+\gve) - L(s) \right| &\leq c \int_0^s u^{-\frac{3}{2}} \, \go(s,u,\gve) \, du + \frac{cM}{\gd} \left[ (s+\gve)^{\gd}
                                                        - s^{\gd} \right] \\
                                                        &\leq c \int_0^s u^{-\frac{3}{2}} \, \go(s,u,\gve) \, du + \frac{cM\gve^{\gd}}{\gd},
    \end{aligned}  
  \end{equation}
  where $c > 0$ is an absolute constant $\go(s,u,\gve)$ is defined in (\ref{Def:w}).
\end{lemma}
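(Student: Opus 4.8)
The plan is to start from the exact decomposition \eqref{E:deltaL} of $L(s+\gve) - L(s)$ into the ``diagonal'' integral over $[0,s]$, comparing $\tau(s+\gve-u,s+\gve)^{-2}$ with $\tau(s-u,s)^{-2}$, plus the ``boundary'' integral over $[s,s+\gve]$. The boundary term is already completely dealt with by \eqref{I:deltaL}: from $\Vert \gl \Vert_{\Lip(\half,[a,b])} \le M(b-a)^{\gd}$ and Lemma~\ref{L:diamE} one gets $|\half + 2\tau(s+\gve-u,s+\gve)^{-2}| \le cMu^{\gd}$, and integrating $cMu^{\gd-1}$ over $[s,s+\gve]$ yields the clean bound $\frac{cM}{\gd}[(s+\gve)^{\gd}-s^{\gd}] \le \frac{cM}{\gd}\gve^{\gd}$, which is exactly the last term in the asserted estimate. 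So all the work is in the first integral.

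For the diagonal term, I would write $\frac{2}{\tau^2}$ as the value of the function $\Phi(z) := 2/z^2$ and use that on the compact set $E_1 \subseteq \bbH$ (where all the relevant $\tau$'s live, thanks to $\Vert\gl\Vert_{\Lip(\half)} \le 1$) the map $\Phi$ is Lipschitz with an absolute constant. Hence
\[
    \left| \frac{2}{\tau(s+\gve-u,s+\gve)^2} - \frac{2}{\tau(s-u,s)^2} \right|
    \le c \left| \tau(s+\gve-u,s+\gve) - \tau(s-u,s) \right|
    = \frac{c}{\sqrt{u}} \left| \gr(s+\gve-u,s+\gve) - \gr(s-u,s) \right|,
\]
using the definition $\tau(\cdot,\cdot) = \gr(\cdot,\cdot)/\sqrt{u}$ with increment $u$ in both terms. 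Now comes the key observation, which the excerpt spells out: $\gr(s+\gve-u,s+\gve)$ and $\gr(s-u,s)$ are tips of two Loewner slits run over a common time interval $[0,u]$, with driving functions $\gl_1(v) = \gl(s+\gve-u+v) - \gl(s+\gve-u)$ and $\gl_2(v) = \gl(s-u+v) - \gl(s-u)$ respectively (by the stationary property). Both satisfy the $\sigma$-Lip($\frac12$) condition with $\sigma = 1$, so Theorem~\ref{Thm:mod_cont} applies on $[0,u]$ and gives $|\gr(s+\gve-u,s+\gve) - \gr(s-u,s)| \le c\,\Vert \gl_1 - \gl_2\Vert_{\infty,[0,u]} = c\,\go(s,u,\gve)$, which is precisely the quantity defined in \eqref{Def:w}. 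Combining, the diagonal integrand is bounded by $c\,u^{-1/2} \cdot u^{-1}\cdot \go(s,u,\gve) \cdot \text{(from the }\tfrac{du}{u}\text{)}$; being careful, the $\frac{du}{u}$ in \eqref{E:deltaL} together with the extra $u^{-1/2}$ from $|\tau_1-\tau_2| \le c u^{-1/2}|\gr_1-\gr_2|$ produces the $u^{-3/2}$ weight, giving $c\int_0^s u^{-3/2}\go(s,u,\gve)\,du$ as claimed.

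The only point needing care — and the place I would flag as the main (if modest) obstacle — is bookkeeping the powers of $u$ and checking that Theorem~\ref{Thm:mod_cont} is legitimately invoked on the shrinking interval $[0,u]$: one must confirm the two time-shifted, recentered driving functions genuinely fall in the hypothesis class ($\Vert\cdot\Vert_{\Lip(\half)} \le 1$ is inherited, and the recentering is harmless since the Loewner equation and $\gr$ are translation-equivariant), and that $\go(s,u,\gve) = \Vert\gl_1-\gl_2\Vert_\infty$ after matching up $\gl_1(v)-\gl_2(v) = \gl(s+\gve-u+v)-\gl(s+\gve-u)-\gl(s-u+v)+\gl(s-u)$ with \eqref{Def:w} under the substitution $v \mapsto u-v$. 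With that in hand, adding the two integrals and using $(s+\gve)^\gd - s^\gd \le \gve^\gd$ gives both lines of \eqref{I:deltaL}, completing the proof.
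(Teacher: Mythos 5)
Your proposal follows essentially the same route as the paper: you use the decomposition~\eqref{E:deltaL}, bound the boundary integral via Lemma~\ref{L:diamE}, and bound the diagonal integral by combining the Lipschitz continuity of $z \mapsto 2/z^2$ on $E_1$, the normalization $\tau = \gr/\sqrt{u}$, and Theorem~\ref{Thm:mod_cont} applied to the two time-shifted recentered driving functions on $[0,u]$. The bookkeeping you flag (the substitution $v \mapsto u-v$ matching $\go$ to $\Vert \gl_1-\gl_2 \Vert_\infty$, and inheritance of $\LipHalfNorm \leq 1$) checks out, so the proposal is correct and coincides with the paper's own argument.
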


The first inequality in (\ref{I:deltaL}) will be used in \S\ref{S:C^(1,delta)}, and we use the second estimate in this
section. When $\gd = \half$, we will see soon  (\ref{I:deltaL}) gives $\left| L(s+\gve) - L(s) \right| = O(\sqrt{\gve})$ as $\gve
\downarrow 0$. We achieve this by controlling the size of $\go(s,u,\gve)$. The estimate depends on the regularity
of $\gl$. In this section, $\gl$ is only $\Lip(\half + \gd)$, the following estimate of $\go(s,u,\gve)$ is what we should
expect and will be improved in \S\ref{S:C^(1,delta)} under the assumption $\gl \in C^{1,\gd}$.

\begin{lemma} \label{L:L_Lip(delta)}
  Let $\gl \colon [0,T] \to \bbR$ satisfy the \MTdeltaCondition.
  For any $0 \leq s < s+\gve \leq T$ and $0 < u < s$,
  \[
      \go(s,u,\gve) \leq
      \begin{cases}
        2M u^{\half + \gd}, & u \leq \gve \\
        2M \gve^{\half+\gd}, & u \geq \gve \\
      \end{cases}
  \]
  and $\left| L(s + \gve) - L(s) \right| \leq C \gve^{\gd}$,
  where $C = C(M,T, \gd) > 0$.  
\end{lemma}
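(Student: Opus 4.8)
The plan is to use the first inequality in Lemma~\ref{L:deltaL} together with the pointwise bound on $\go(s,u,\gve)$ that we establish first. For the bound on $\go$, recall from (\ref{Def:w}) that $\go(s,u,\gve)$ is a supremum over $0 \le v \le u$ of an expression of the form $\big[\gl(s+\gve-v) - \gl(s+\gve-u)\big] - \big[\gl(s-v) - \gl(s-u)\big]$. When $u \le \gve$, I would simply bound each bracketed difference separately by the \MTdeltaCondition: each has the form $\gl(\text{time}_1) - \gl(\text{time}_2)$ with $|\text{time}_1 - \text{time}_2| = |u - v| \le u$, so each is at most $M|u-v|^{\half+\gd} \le M u^{\half+\gd}$, giving $\go(s,u,\gve) \le 2M u^{\half+\gd}$. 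When $u \ge \gve$, the same crude splitting would only give $2Mu^{\half+\gd}$, which is too weak; instead I would regroup the four terms as $\big[\gl(s+\gve-v) - \gl(s-v)\big] - \big[\gl(s+\gve-u) - \gl(s-u)\big]$, so that each bracket now compares two times differing by exactly $\gve$. Each bracket is then $\le M\gve^{\half+\gd}$, giving $\go(s,u,\gve) \le 2M\gve^{\half+\gd}$.

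With this dichotomy in hand, I would plug into the first estimate of (\ref{I:deltaL}) and split the integral $\int_0^s u^{-3/2}\go(s,u,\gve)\,du$ at $u = \gve$ (assuming $\gve \le s$; the case $\gve > s$ is handled by the second form of (\ref{I:deltaL}) directly, or by monotonicity after reducing to $s \le \gve$). On $(0,\gve)$ we get $\int_0^\gve u^{-3/2}\cdot 2Mu^{\half+\gd}\,du = 2M\int_0^\gve u^{\gd-1}\,du = \frac{2M}{\gd}\gve^{\gd}$. On $(\gve,s)$ we get $\int_\gve^s u^{-3/2}\cdot 2M\gve^{\half+\gd}\,du = 2M\gve^{\half+\gd}\int_\gve^s u^{-3/2}\,du \le 2M\gve^{\half+\gd}\cdot\big(2\gve^{-1/2}\big) = 4M\gve^{\gd}$, using $\int_\gve^\infty u^{-3/2}\,du = 2\gve^{-1/2}$. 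Adding these to the $\frac{cM}{\gd}\gve^{\gd}$ boundary term from (\ref{I:deltaL}) yields $|L(s+\gve) - L(s)| \le C\gve^{\gd}$ with $C = C(M,T,\gd)$, which is the claim.

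The computations here are entirely routine; the only genuinely substantive point — and the one place where some care is needed — is the regrouping of the four-term difference in the regime $u \ge \gve$. The naive term-by-term bound is insufficient there, and one must recognize that the ``discrete mixed second difference'' structure of $\go$ allows a bound by the smaller of the two increments ($u$ or $\gve$) rather than always by $u$. Once that observation is made, the splitting of the integral at $u = \gve$ and the two elementary power-function integrals finish the proof; the convergence of $\int_\gve^s u^{-3/2}\,du$ at the upper end (and its boundedness by $2\gve^{-1/2}$) is exactly what converts the extra $\gve^{\half}$ gained in the $u\ge\gve$ regime into the final $\gve^{\gd}$.
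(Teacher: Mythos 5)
Your proposal is correct and follows essentially the same route as the paper: the same term-by-term bound when $u \le \gve$, the same regrouping of the mixed second difference when $u \ge \gve$, and the same splitting of the integral at $u = \gve$, with the two elementary power integrals yielding $\gve^{\gd}$. Your added remark about the degenerate case $\gve > s$ (where the entire integral falls in the $u \le \gve$ regime) is a minor clarification that the paper leaves implicit.
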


\begin{proof}
  When $u \leq \gve$,
  \[
    \begin{aligned}
       &\left| \gl(s+\gve-v) - \gl(s+\gve - u) - \gl(s-v) + \gl(s-u) \right| \\
       \leq  &\left| \gl(s+\gve-v) - \gl(s+\gve - u)\right| + \left|\gl(s-v) - \gl(s-u) \right| \\
       \leq &2Mu^{\half + \gd}
    \end{aligned}    
  \]
  for any $0 \leq v \leq u$. When $u \geq \gve$, we rearrange terms as
  \[
    \begin{aligned}
       &\left| \gl(s+\gve-v) - \gl(s+\gve - u) - \gl(s-v) + \gl(s-u) \right| \\
       \leq  &\left| \gl(s+\gve-v) - \gl(s-v) \right| + \left| \gl(s+\gve - u) - \gl(s-u) \right| \\
       \leq &2M\gve^{\half + \gd}
    \end{aligned}    
  \]
  for any $0 \leq v \leq u$. We have proved the desired estimates of $\go(s,u,\gve)$.
  
  To prove $\left| L(s + \gve) - L(s) \right| \leq C \gve^{\gd}$, we split the integral in (\ref{I:deltaL}):
  \[
          \left| \int_0^s u^{-\frac{3}{2}} \, \go(s,u,\gve) \, du \right| \leq \int_0^{\gve} 2M u^{-1+\gd} \, du +
          \int_{\gve}^s 2M u^{-\frac{3}{2}} \gve^{\half+\gd} \, du \leq C \gve^{\gd}
  \]
\end{proof}

We have all the ingredients for proving our first main result.

\begin{theorem} \label{Thm:HolderDrivingFunction}
    Let $\gl \colon [0,T] \to \bbR$ be such that
    \[
              \left| \gl(t_1) - \gl(t_2) \right| \leq M \left| t_1 - t_2 \right|^{\half + \gd}
    \]
    for all $t_1$,~$t_2 \in [0,T]$, where $M > 0$ and $\gd \in (0, \half]$ are constants. Then
    \begin{thmList}
         \item  $\gR(t)  := \gr(t^2)$ is $C^{1,\gd}$ regular\footnote{whose derivative is nonzero everywhere} on $[0,\sqrt{T}]$; and
         \item  the slit $\gr(t)$ grows vertically at $t = 0$.
    \end{thmList}  
    With an extra assumption that $\LipHalfNorm \leq 1$, these statements are
     quantitative:
     \begin{equation} \label{I:Gamma_C1delta_norm}
          \Vert \gR \Vert_{C^{1,\gd}([0,T])} \leq N \qquad \text{and} \qquad
          \inf_{t \in [0,T]} \left| \gR'(t) \right| \geq \frac{1}{N},
     \end{equation}
     where $N = N(M,T,\gd) > 0$ depends only on $M$, $T$ and $\gd$. Furthermore,
      \begin{equation} \label{I:Gamma'}
            \left| \gR'(t) - 2i \right| \leq N \, t^{2 \gd} \qquad
            (0 < t \leq \sqrt{T}).
      \end{equation}
\end{theorem}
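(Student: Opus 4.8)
The plan is to deduce everything from Corollary~\ref{Cor:r'} and the Hölder estimate on $L$ in Lemma~\ref{L:L_Lip(delta)}, after a routine reduction to the normalized case. First I would reduce to $\LipHalfNorm\le 1$: given only the $\Lip(\half+\gd)$ bound with constant $M$, one restricts to a short initial interval $[0,T_0]$ on which the $\Lip(\half)$-seminorm is at most $1$ (possible since $\Vert\gl\Vert_{\Lip(\half,[0,t])}\le Mt^{\gd}\to 0$), proves the quantitative statements there, and then propagates along the slit using the stationary property: the time-shifted driving functions $\gl_s$ again satisfy a $\Lip(\half+\gd)$ bound and generate the shifted pieces $g_s(\gr(s+\cdot))$, so $C^{1,\gd}$-regularity with nonvanishing derivative is a local property that glues. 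This lets us assume $\LipHalfNorm\le 1$ throughout, so that Corollary~\ref{Cor:r'}, Lemma~\ref{L:|L(s)|} and Lemma~\ref{L:L_Lip(delta)} all apply with constants depending only on $M,T,\gd$.

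Next, with the parametrization $\gR(t)=\gr(t^2)$, the chain rule and Corollary~\ref{Cor:r'} give, for $t\in(0,\sqrt T]$,
\[
    \gR'(t) = 2t\,\gr'(t^2) = 2t\cdot\frac{i}{t}\,e^{L(t^2)} = 2i\,e^{L(t^2)}.
\]
So $\gR'$ extends continuously to $t=0$ with value $2i$, since $L(t^2)\to 0$ by Lemma~\ref{L:|L(s)|}; this already proves that $\gR$ is $C^1$ on $[0,\sqrt T]$ with nonvanishing derivative, and that $\gr$ grows vertically at $0$ (as $\gR'(0)=2i$ means $\gr(t)/|\gr(t)|\to i$). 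For the $C^{1,\gd}$ bound one estimates
\[
    |\gR'(t_1)-\gR'(t_2)| = 2\,\bigl|e^{L(t_1^2)}-e^{L(t_2^2)}\bigr| \le 2e^{\Vert L\Vert_\infty}\,\bigl|L(t_1^2)-L(t_2^2)\bigr|,
\]
and Lemma~\ref{L:L_Lip(delta)} gives $|L(t_1^2)-L(t_2^2)|\le C\,|t_1^2-t_2^2|^{\gd}\le C\,(|t_1-t_2|\,|t_1+t_2|)^{\gd}\lesssim C\,T^{\gd/2}\,|t_1-t_2|^{\gd}$ on $[0,\sqrt T]$, where I have used $|t_1^2-t_2^2| = |t_1-t_2||t_1+t_2|$ and $|t_1+t_2|\le 2\sqrt T$. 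Combining with $\Vert L\Vert_\infty\le cMT^{\gd}/\gd$ from Lemma~\ref{L:|L(s)|} yields the $[t_1\ne t_2]$-part of $\Vert\gR\Vert_{C^{1,\gd}}\le N$; the $\sup|\gR|$ and $\sup|\gR'|$ parts are immediate from $|\gR'|\le 2e^{\Vert L\Vert_\infty}$ and $|\gR(t)|\le\sqrt T\sup_{[0,\sqrt T]}|\gR'|$, and $\inf|\gR'|\ge 2e^{-\Vert L\Vert_\infty}\ge 1/N$. Finally, \eqref{I:Gamma'} follows from $|\gR'(t)-2i| = 2|e^{L(t^2)}-1|\le 2e^{\Vert L\Vert_\infty}|L(t^2)|\le N\,t^{2\gd}$, again by Lemma~\ref{L:|L(s)|}.

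The main obstacle is the unnormalized statement (a): one must be careful that $C^{1,\gd}$-regularity really does glue across the overlap of the initial piece $[0,T_0]$ and the shifted pieces. Concretely, one needs that $g_s$ is smooth up to the relevant boundary arc so that $\gr(s+u)=f_s(\gr^{\gl_s}(u))$ transfers $C^{1,\gd}$-regularity of the shifted slit to $C^{1,\gd}$-regularity of $\gr$ near $\gr(s)$, with derivative pulled back by the conformal factor $f_s'$. For interior points $s\in(0,T)$ this is standard boundary regularity (the tip $\gr(s)$ is an interior point of the analytic boundary arc $\gr((0,T))$ of $H_s$, so $f_s$ is real-analytic there), but assembling these local reparametrizations into a single globally $C^{1,\gd}$ curve $\gR$ requires checking the one-sided derivatives match; this is routine once one notes that both the local formula and the global formula reduce to \eqref{E:r'}. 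All remaining steps are the elementary estimates sketched above.
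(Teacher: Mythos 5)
Your argument for the normalized case $\LipHalfNorm \leq 1$ is correct and essentially identical to the paper's: $\gR'(t) = 2i\,e^{L(t^2)}$ from Corollary~\ref{Cor:r'}, the $\Lip(\gd)$ bound on $L$ from Lemma~\ref{L:L_Lip(delta)} transferred to $\gR'$ via the Lipschitz map $t\mapsto t^2$ on $[0,\sqrt T]$, and the lower/upper bounds from $\left|L(t^2)\right| \leq cMT^{\gd}/\gd$ (Lemma~\ref{L:|L(s)|}). The vertical-growth conclusion and estimate \eqref{I:Gamma'} also match the paper.

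The gap is in your handling of the unnormalized case. You propose to glue local pieces via $\gr(s+u)=f_s(\gr^{\gl_s}(u))$ and justify the transfer of regularity by ``standard boundary regularity'': ``the tip $\gr(s)$ is an interior point of the analytic boundary arc $\gr((0,T))$ of $H_s$, so $f_s$ is real-analytic there.'' This is circular and also misidentifies the geometry. First, you do not yet know $\gr$ is smooth (even $C^{1,\gd}$) at $\gr(s)$ --- that is precisely what is being proved --- so you cannot appeal to analyticity of the boundary arc. Second, $\gr(s)$ is the \emph{tip} of the slit, not an interior point of a Jordan boundary arc of $H_s$; the boundary of $H_s$ near $\gr(s)$ is a corner of opening angle $2\pi$, at which $f_s$ has a square-root singularity and $f_s'$ blows up. There is no ``standard'' boundary regularity statement that lets you push $C^{1,\gd}$ regularity through $f_s$ at $u=0$ without further work. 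The paper sidesteps this entirely: it picks a small $\gve>0$ with $M(t_{j+2}-t_j+\gve)^{\gd}\le 1$ and writes $\gr(t_j+u)=f_{t_j-\gve}\bigl(\gl(t_j-\gve)+\gr(t_j-\gve,\,t_j+u)\bigr)$. The argument of $f_{t_j-\gve}$ equals $g_{t_j-\gve}(\gr(t_j+u))$, which lies strictly inside $\bbH$ (indeed in a compact subset) for all $u\in[0,t_{j+2}-t_j]$ since $t_j>0$; so only ordinary interior analyticity of a conformal map is used, and no boundary behavior at a tip is ever invoked. Your concluding remark that ``both the local formula and the global formula reduce to \eqref{E:r'}'' does not repair this either, since \eqref{E:r'} is established (in Corollary~\ref{Cor:r'}) only under the \MTdeltaCondition, which presupposes $\LipHalfNorm\le 1$; extending it directly to $\LipHalfNorm>1$ would itself require reproving the lemmas it rests on. You should replace the boundary-regularity appeal with the paper's $\gve$-shift.
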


\begin{proof}[Proof of Theorem~\ref{Thm:HolderDrivingFunction}]
  We first assume $\LipHalfNorm \leq 1$.
  By Corollary~\ref{Cor:r'}, $\gR(t) = \gr(t^2)$ is differentiable and $\gR'(t) = 2 i e^{L(t^2)}$. With this formula
  of $\gR'(t)$, we claim that $\gR'(t)$ is $\Lip(\gd)$. To see this, we
  first note from Lemma~\ref{L:|L(s)|} that $\sup_{0 \leq t \leq \sqrt{T}}\left| L(t^2) \right| \leq R$ for some constant
  $R = \frac{cMT^{\gd}}{\gd}$ depending only on $M$, $T$ and $\gd$. This tells us
  \[
      \left| \gR'(t_1) - \gR'(t_2) \right| \leq 2 \left( \sup_{\left| z \right| \leq R} \left|e^z\right| \right) \left| L(t_1^2) -
      L(t_2^2) \right| \leq C \left| t_1 - t_2 \right|^{\gd}
  \]
  for some $C = C(M,T,\gd) > 0$ by Lemma~\ref{L:L_Lip(delta)}. This proves (a). The estimates
  (\ref{I:Gamma_C1delta_norm}) and (\ref{I:Gamma'}) follow from Lemma~\ref{L:|L(s)|} and other estimates
  we have proved. For example,
  \[
      \left| \gR'(t) \right| = 2 \left| e^{L(t^2)} \right| = 2 e^{\Re L(t^2)} \geq 2 e^{-R}
  \]
  and (\ref{I:Gamma'}) can be derived from
  \[
      \left| \gR'(t) - 2i \right| = 2 \left| e^{L(t^2)} - 1 \right| \leq 2 \left( \sup_{\left|z\right| \leq R} \left| e^z \right| \right)
      \left| L(t^2) \right|.
  \]
  
  If $\LipHalfNorm > 1$, we pick a partition $0 = t_0 < t_1 < \cdots < t_n = T$ for which $M(t_{j+2} - t_j)^{\gd} < 1$
  for all $j = 0$, 1, \dots,~$n-2$. This guarantees that $\Vert \gl \Vert_{\Lip(\half, [t_j,t_{j+2}])} < 1$ for all $j$.
  It suffices to show
  \begin{defList}
    \item  $\gR(t) = \gr(t^2)$ is $C^{1,\gd}$ regular on $[t_0,t_2]$; and
    \item  $\gr(t)$ is $C^{1,\gd}$ regular on $[t_j, t_{j+2}]$ for $j = 1$, 2, \dots,~$n-2$.
  \end{defList}
  (i) follows from the argument of the previous paragraph, since we are back to the case $\LipHalfNorm \leq 1$.
  To show (ii), we pick $\gve > 0$ for which $M(t_{j+2} - t_j + \gve)^{\gd} \leq 1$.  Again, by our earlier argument
  for the case $\LipHalfNorm \leq 1$, the function
  \[
      u \mapsto \gr(t_j - \gve, t_j + u)
  \]
  is $C^{1,\gd}$ regular on $[0, t_{t+2} - t_j]$, and therefore
  \[
      u \mapsto \gr(t_j + u) = f_{t_j - \gve}(\gl(t_j - \gve) +\gr(t_j - \gve, t_j + u))
  \]
  is also $C^{1,\gd}$ regular on $[0, t_{j+2} - t_j]$. 
\end{proof}

\section{When $\gl \in C^{1,\gd}$ with $0 < \gd \leq \half$} \label{S:C^(1,delta)}

In this section, our driving function $\gl \colon [0,T] \to \bbR$ satisfies the \MToneDeltaCondition\
with $0 < \gd \leq \half$. That is to say, $\LipHalfNorm \leq 1$, $\gl \in C^1([0,T])$ and
\[
    \Vert \gl \Vert_{C^{1,\gd}} = \sup_{t \in [0,T]} \left| \gl(t) \right| + \sup_{t \in [0,T]} \left| \gl'(t) \right|
    + \sup_{t_1 \not= t_2 \in [0,T]} \frac{\left| \gl'(t_1) - \gl'(t_2) \right|}{\left| t_1 - t_2 \right|^{\gd}} \leq M.
\]
Our goal is to improve the estimate $\left| L(s+\gve) - \L(s) \right| = O(\gve^{\gd})$ given in \S\ref{S:Lip(1/2+delta)}, 
and we are expecting $O(\gve^{\half + \gd})$. Any $C^{1,\gd}$ function is Lipschitz, i.e.\ $\Lip(\half + \half)$. Applying
Lemma~\ref{L:|L(s)|} and the first inequality in (\ref{I:deltaL}) with $\gd = \half$ yields
\begin{equation} \label{I:L}
    \left| L(s) \right| \leq cM \sqrt{s}
\end{equation}
and
\begin{equation} \label{I:deltaL_01}
    \left| L(s+\gve) - L(s) \right| \leq c \int_0^s u^{-\frac{3}{2}} \, \go(s,u,\gve) \, du + \frac{cM \gve}{\sqrt{s}},
\end{equation}
for $0 < s < s+\gve \leq T$. We now improve the estimate of $\go(s,u,\gve)$ in Lemma~\ref{L:L_Lip(delta)} to the following.
Recall the definition
\[
    \go(s,u,\gve) := \sup_{0 \leq v \leq u} \left| \gl(s+\gve-v) - \gl(s+\gve - u) - \gl(s-v) + \gl(s-u) \right|.
\]

\begin{lemma} \label{L:w(s,u,epsilon)}
  Let $\gl \colon [0,T] \to \bbR$ satisfy the \MTnAlphaCondition\ with $n = 1$ and  $0 < \ga \leq 1$. For any $0 \leq s < s+\gve \leq T$
  and $0 < u < s$,
  \[
      \go(s,u,\gve) \leq
      \begin{cases}
        M \gve^{\ga} u, & u \leq \gve \\
        M u^{\ga} \gve, & u \geq \gve \\
      \end{cases}
  \]
  If $\ga = \gd < \half$, for any $0 < s < s+\gve \leq T$, we have
  \begin{equation} \label{I:deltaL_02}
      \left| L(s+\gve) - L(s) \right| \leq \frac{cM}{1-2\gd} \cdot \left( \gve^{\half + \gd} + \frac{\gve}{\sqrt{s}} \right),
  \end{equation}
 where $c > 0$ is an absolute constant. When $\gd = \half$,
  \begin{equation} \label{I:deltaL_03}
      \left| L(s+\gve) - L(s) \right| \leq c M \gve \cdot \left[ 1 + \log^{+} \left(\frac{s}{\gve}\right) + \frac{1}{\sqrt{s}} \right],
  \end{equation}
  where $\log^{+} x = \max\{ \log(x), 0 \}$.
\end{lemma}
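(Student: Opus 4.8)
The plan is to establish the pointwise bound on $\go(s,u,\gve)$ first, via two complementary groupings of its defining four-term difference, and then to substitute it into the estimate (\ref{I:deltaL_01}) and evaluate the resulting elementary integrals, distinguishing the cases $\gd < \half$ and $\gd = \half$.

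\emph{The $\go$ bound.} Fix $0 \le v \le u < s$ and $\gve > 0$ with $s+\gve \le T$. Write the quantity inside the supremum defining $\go(s,u,\gve)$ as
\[
\gl(s+\gve-v) - \gl(s+\gve-u) - \gl(s-v) + \gl(s-u) = \phi(v) - \phi(u), \qquad \phi(r) := \gl(s+\gve-r) - \gl(s-r).
\]
Since $\phi'(r) = \gl'(s-r) - \gl'(s+\gve-r)$ and the $\ga$-Hölder seminorm of $\gl'$ is at most $M$ (it is one of the summands in $\Vert \gl \Vert_{C^{1,\ga}} \le M$), we get $|\phi'| \le M \gve^{\ga}$, hence $|\phi(v) - \phi(u)| \le M\gve^{\ga}(u-v) \le M\gve^{\ga} u$; taking the supremum over $v$ gives $\go(s,u,\gve) \le M\gve^{\ga} u$. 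Regrouping the same four terms as $\eta(s+\gve) - \eta(s)$ with $\eta(t) := \gl(t-v) - \gl(t-u)$, and using $|\eta'(t)| = |\gl'(t-v) - \gl'(t-u)| \le M(u-v)^{\ga} \le M u^{\ga}$, gives $\go(s,u,\gve) \le M u^{\ga} \gve$. Since $\gve^{\ga} u \le u^{\ga}\gve$ precisely when $u \le \gve$, taking the smaller bound on each range yields the stated two-branch estimate.

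\emph{The $L$-increment.} Substituting the $\go$ bound into (\ref{I:deltaL_01}),
\[
|L(s+\gve) - L(s)| \le c \int_0^s u^{-\frac{3}{2}} \, \go(s,u,\gve) \, du + \frac{cM\gve}{\sqrt{s}},
\]
I would split according to whether $\gve < s$. If $\gve \ge s$, then $u < \gve$ on all of $(0,s)$, so $\go \le M\gve^{\gd} u$ and the integral is $M\gve^{\gd} \int_0^s u^{-\frac{1}{2}}\,du = 2M\gve^{\gd}\sqrt{s} \le 2M\gve^{\gd + \half}$, with no logarithm --- the easy edge case. If $\gve < s$, split $\int_0^s$ at $u = \gve$: on $(0,\gve)$ use $\go \le M\gve^{\gd} u$, contributing $M\gve^{\gd}\int_0^\gve u^{-\frac{1}{2}}\,du = 2M\gve^{\gd+\half}$; on $(\gve,s)$ use $\go \le M u^{\gd}\gve$, contributing $M\gve \int_\gve^s u^{\gd - \frac{3}{2}}\,du$. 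For $\gd < \half$ the exponent $\gd - \frac{3}{2}$ is $< -1$, so $\int_\gve^s u^{\gd-\frac{3}{2}}\,du \le \frac{\gve^{\gd - \half}}{\half - \gd}$ and this piece is $\le \frac{2M}{1-2\gd}\gve^{\gd+\half}$; combining with the $\frac{cM\gve}{\sqrt{s}}$ term and using $\frac{1}{1-2\gd} \ge 1$ gives (\ref{I:deltaL_02}). For $\gd = \half$ the exponent is exactly $-1$, so $\int_\gve^s u^{-1}\,du = \log(s/\gve)$ and this piece is $M\gve\log(s/\gve)$; combining with the $2M\gve$ from the first piece, the $\frac{cM\gve}{\sqrt{s}}$ term, and folding in the $\gve \ge s$ case through the $\log^{+}$, one obtains (\ref{I:deltaL_03}).

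The computation is essentially mechanical once the $\go$ bound is in hand; the only bookkeeping that needs care is the dichotomy $\gve < s$ versus $\gve \ge s$ when splitting $\int_0^s$, and the fact that $\int_\gve^s u^{\gd - \frac{3}{2}}\,du$ transitions from a power to a logarithm exactly at $\gd = \half$ --- which is precisely why (\ref{I:deltaL_03}) carries the extra $\log^{+}(s/\gve)$ and why the constant in (\ref{I:deltaL_02}) blows up as $\gd \uparrow \half$. The one genuinely non-routine step is the first: recognizing that the ``mixed second difference'' defining $\go$ admits \emph{both} the bound $M\gve^{\ga}u$ and the bound $Mu^{\ga}\gve$, according to which pair of nearby arguments of $\gl$ one treats as the short increment --- the former driving the behavior near $u=0$ and the latter controlling the tail $u \in (\gve, s)$.
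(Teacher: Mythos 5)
Your proof is correct and follows essentially the same approach as the paper: the two bounds on $\go(s,u,\gve)$ come from the same two regroupings of the mixed second difference (the paper writes them as integral representations $\int_{s-u}^{s-v}[\gl'(w+\gve)-\gl'(w)]\,dw$ and $\int_{s-u}^{s+\gve-u}[\gl'(w+u-v)-\gl'(w)]\,dw$, which are your $\phi$ and $\eta$ formulations in disguise), and the subsequent split of (\ref{I:deltaL_01}) at $u=\gve$ with the resulting power/logarithm dichotomy at $\gd=\half$ is identical. Your explicit handling of the $\gve\geq s$ edge case is a minor completeness improvement over the paper's brief aside.
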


\begin{proof}
  The equalities
  \[
      \begin{aligned}
          &\gl(s+\gve-v) - \gl(s+\gve-u) - \gl(s-v) + \gl(s-u) \\
          = &\int_{s-u}^{s-v} \gl'(w+\gve) - \gl'(w) \, dw \\
          = &\int_{s-u}^{s+\gve-u} \gl'(w+u-v) - \gl'(w) \, dw
      \end{aligned}
  \]
  hold for all $v \in [0,u]$. Since $\gl' \in \Lip(\ga)$, they prove the desired estimate of $\go(s,u,\gve)$. If $\ga = \gd \in (0,\half)$,
  (\ref{I:deltaL_01}) and our estimates of $\go(s,u,\gve)$ give the following. (We assume $s > \gve$ in the
  computation below. When $s \leq \gve$, the integral $\int_{\gve}^s$ will disappear and our estimate still holds.)   
  \[
      \begin{aligned}
          \left| L(s+\gve) - L(s) \right| &\leq c M \int_0^{\gve} u^{-\frac{3}{2}} \gve^{\gd} u \, du + cM \int_{\gve}^{s}
          u^{-\frac{3}{2}} u^{\gd} \gve \, du + \frac{cM\gve}{\sqrt{s}} \\
          &\leq c M \gve^{\half + \gd} + \frac{cM \gve^{\half + \gd}}{\half - \gd} + \frac{cM\gve}{\sqrt{s}}
      \end{aligned}
  \]
  This proves (\ref{I:deltaL_02}). When $\gd = \half$, the second term in the last expression should be replaced by
  $cM\gve \log^{+} \frac{s}{\gve}$, which shows (\ref{I:deltaL_03}).
\end{proof}

Combining Theorem~\ref{Thm:HolderDrivingFunction} and Lemma~\ref{L:w(s,u,epsilon)}, we have the following theorem.

\begin{theorem} \label{Thm:C^(1,delta)}
  Suppose $\gl \colon [0,T] \to \bbR$ satisfies the \MToneDeltaCondition\ with $0 < \gd < \half$.
  Then  the curve $\gR(t) := \gr(t^2)$ is $C^{1, \half + \gd}$ regular
  on $[0,\sqrt{T}]$. In fact,
  \[
      \Vert \gR \Vert_{C^{1, \half + \gd}([0,T])} \leq N \qquad \text{and} \qquad \inf_{t \in [0,T]} \left| \gR'(t) \right| \geq \frac{1}{N},
  \]
  where $N = N(M,T,\gd) > 0$. When $\gd = \half$, $\gR'(t)$ is weakly Lipschitz in the sense that
  \begin{equation} \label{I:GammaWeaklyLip}
      \left| \gR'(t_1) - \gR'(t_2) \right| \leq N \left| t_1 - t_2 \right| \max \left\{ 1, \log \frac{1}{\left|t_1-t_2\right|}  \right\} \qquad
      (0 \leq t_1 < t_2 \leq \sqrt{T}),
  \end{equation}
  where $N = N(M,T) > 0$.
\end{theorem}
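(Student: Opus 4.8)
The plan is to feed the improved oscillation estimate of Lemma~\ref{L:w(s,u,epsilon)} into the integral representation $\gR'(t) = 2i\,e^{L(t^2)}$ from Corollary~\ref{Cor:r'} and extract $C^{1,\half+\gd}$ regularity of $\gR$ exactly as in the proof of Theorem~\ref{Thm:HolderDrivingFunction}, the only difference being the sharper \Holder\ exponent. First I would dispose of the quantitative reduction: assuming $\LipHalfNorm \leq 1$ (the general case follows by the same partition-of-$[0,T]$ argument used at the end of the proof of Theorem~\ref{Thm:HolderDrivingFunction}, applied on subintervals short enough that the local $\Lip(\half)$-norm is $\leq 1$ and the local $C^{1,\gd}$-norm is controlled), so that $L(s)$ is defined and $\gR(t) = \gr(t^2)$ is differentiable with $\gR'(t) = 2i e^{L(t^2)}$. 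The bound $|L(s)| \leq cM\sqrt{s}$ from (\ref{I:L}) gives $R := \sup_{0\leq t \leq \sqrt{T}} |L(t^2)| < \infty$, hence $|\gR'(t)| = 2e^{\Re L(t^2)} \geq 2e^{-R} = 1/N$ and $\|\gR\|_{C^0}, \|\gR'\|_{C^0}$ are bounded in terms of $M$, $T$, $\gd$; this is the quantitative part apart from the \Holder\ seminorm of $\gR'$.

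For the \Holder\ seminorm, the key inequality is
\[
    \left| \gR'(t_1) - \gR'(t_2) \right| \leq 2 \left( \sup_{|z| \leq R} |e^z| \right) \left| L(t_1^2) - L(t_2^2) \right|,
\]
so it suffices to show $|L(s+\gve) - L(s)| \leq C \gve^{\half+\gd}$ for $0 \leq s < s+\gve \leq T$ with $C = C(M,T,\gd)$ when $\gd < \half$, and the weak-Lipschitz analogue $|L(s+\gve)-L(s)| \leq C\gve\max\{1,\log(1/\gve)\}$ when $\gd = \half$; translating an increment in $s^2$ to an increment in $s$ costs only a bounded factor since $|t_1^2 - t_2^2| \leq (t_1+t_2)|t_1-t_2| \leq 2\sqrt{T}\,|t_1-t_2|$. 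But (\ref{I:deltaL_02}) of Lemma~\ref{L:w(s,u,epsilon)} already gives $|L(s+\gve)-L(s)| \leq \frac{cM}{1-2\gd}(\gve^{\half+\gd} + \gve/\sqrt{s})$, and (\ref{I:deltaL_03}) gives the $\gd=\half$ version with the extra $\log^+(s/\gve)$ and $\gve/\sqrt s$ terms. The apparent difficulty is the $\gve/\sqrt{s}$ term, which blows up as $s \to 0$ and is \emph{not} $O(\gve^{\half+\gd})$ for fixed $s$; however, after the substitution $s = t^2$ this term becomes $\gve/|t|$, and since we are comparing $L(t_1^2)$ and $L(t_2^2)$ with $\gve = |t_2^2 - t_1^2| \asymp |t| |t_1 - t_2|$ (where $|t| \asymp \max\{t_1,t_2\}$), the bad term contributes $\gve/|t| \asymp |t_1 - t_2|$, which is $\lesssim |t_1-t_2|^{\half+\gd}$ on the bounded interval $[0,\sqrt T]$ — so it is harmless in the composed estimate. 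This bookkeeping, making sure every term re-expressed in the $t$ variable is genuinely $O(|t_1-t_2|^{\half+\gd})$ (resp.\ $O(|t_1-t_2|\log\frac{1}{|t_1-t_2|})$), is the one place that needs care, and I expect it to be the main (though modest) obstacle; one should handle separately the regime $|t_1 - t_2|$ comparable to $\max\{t_1,t_2\}$, where one instead uses the crude bound $|L(t_1^2)| + |L(t_2^2)| \lesssim \sqrt{\max\{t_1,t_2\}^2} \asymp |t_1-t_2|$ from (\ref{I:L}), which is again $\lesssim |t_1-t_2|^{\half+\gd}$.

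Assembling: for $\gd < \half$, the above shows $\|\gR'\|_{\Lip(\half+\gd,[0,\sqrt T])} \leq N(M,T,\gd)$, which together with the $C^0$ bounds gives $\|\gR\|_{C^{1,\half+\gd}} \leq N$ and $\inf|\gR'| \geq 1/N$. For $\gd = \half$, the same computation with (\ref{I:deltaL_03}) in place of (\ref{I:deltaL_02}) yields (\ref{I:GammaWeaklyLip}), the logarithmic factor being inherited directly from the $\log^+(s/\gve)$ term; here one uses $\log^+(s/\gve) = \log^+(|t|^2/(|t_2^2-t_1^2|)) \lesssim \log^+(|t|/|t_1-t_2|) \lesssim \log(1/|t_1-t_2|)$ on the bounded interval. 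Finally, $\gr(t)$ growing vertically at $t=0$ and the non-vanishing of $\gR'$ are immediate from $\gR'(0) = 2i e^{L(0)} = 2i$ and continuity of $L$ (Lemma~\ref{L:|L(s)|} gives $L(0)=0$), completing the proof.
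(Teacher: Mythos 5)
Your proposal is correct and follows essentially the same route as the paper's proof: express $\gR'(t)=2i\,e^{L(t^2)}$, reduce the \Holder\ seminorm bound to an estimate on $L(t_2^2)-L(t_1^2)$, split into the regimes $s\lesssim\gve$ (handled by the crude bound $|L(s)|\leq cM\sqrt{s}$ from (\ref{I:L})) and $s\gtrsim\gve$ (handled by (\ref{I:deltaL_02}) resp.\ (\ref{I:deltaL_03}), with the observation that after the $s=t^2$ substitution the problematic $\gve/\sqrt{s}$ term contributes only $O(|t_1-t_2|)$). One small remark: the opening reduction to $\LipHalfNorm\leq1$ via a partition of $[0,T]$ is unnecessary here, since the \MToneDeltaCondition\ already includes $\LipHalfNorm\leq1$ by definition.
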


We do not know whether or not $\limsup_{\gd \uparrow \half} N(M,T,\gd) < \infty$. The (non-optimal) constant $C = \frac{cM}
{1-2\gd}$ in inequality (\ref{I:deltaL_02}) blows up when $\gd \uparrow \half$. When $\gd = \half$, inequality (\ref{I:deltaL_03})
implies $\left| \gR'(s+\gve) - \gR'(s) \right| = O(\gve \log^{+} \frac{1}{\gve})$ as $\gve \downarrow 0$. In particular, $\gR(t)$ is weakly
$C^{1,1}$ in the sense that it is $C^{1,\ga}$ for every $\ga < 1$.

\begin{proof}
  We know from Theorem~\ref{Thm:HolderDrivingFunction} that $\gR \in C^1$. Since $\gR'(s) = 2i
  e^{L(s^2)}$, all we need to show is that $s \mapsto L(s^2)$ is $\Lip(\half + \gd)$. Suppose $0 < s < s+\gve \leq \sqrt{T}$.
  If $s \leq \gve$, then (\ref{I:L}) gives
  \[
      \left| L((s+\gve)^2) - L(s^2) \right| \leq \left| L((s+\gve)^2) \right| + \left| L(s^2) \right| \leq C \gve
  \]
  for some constant $C > 0$. If $s > \gve$, Lemma~\ref{L:w(s,u,epsilon)} implies 
  \[
      \begin{aligned}
          \left| L((s + \gve)^2) - L(s^2) \right| &\leq C (2s\gve + \gve^2)^{\half + \gd} + \frac{C (2s \gve + \gve^2)}{s} \\
                                                                       &\leq C(2s+\gve)^{\half+\gd} \gve^{\half+\gd} + 3C\gve \\
                                                                       &\leq C \gve^{\half + \gd}.
      \end{aligned}                                                                 
  \]
  Suppose $\gd = \half$. To prove (\ref{I:GammaWeaklyLip}), it suffices to show
  \[
      \left| L((s+\gve)^2) - L(s^2) \right| \leq C \gve \left( 1 + \left| \log \gve \right| \right)
  \]
  for $0 \leq s < s+\gve \leq \sqrt{T}$ and some constant $C > 0$. As before, when $s \leq 2\gve$ the desired estimate
  follows from (\ref{I:L}). When $s > 2\gve$, (\ref{I:deltaL_03}) implies
  \[
      \begin{aligned}
        \left| L((s+\gve)^2) - L(s^2) \right| &\leq C (2s+\gve) \gve \left( 1 + \log^+
        \frac{s^2}{(2s+\gve) \gve} + \frac{1}{s} \right) \\
        &\leq C s \gve \left( 1 + \log \frac{s}{2\gve} + \frac{1}{s} \right) \\
        &\leq C \gve \left( 1 + \left| \log \gve \right| \right)
      \end{aligned}  
  \]
  where $C = C(M,T) > 0$.
\end{proof}

\begin{corollary}
  Under the assumptions of Theorem~\ref{Thm:C^(1,delta)}, the slit $\gr(t) = \gr^{\gl}(t)$ is $C^{1, \half + \gd}$ regular on
  $[a,T]$ (or weakly $C^{1,1}$ when $\gd = \half$) for every $a > 0$. When $0 < \gd < \half$,
  \[
      \Vert \gr \Vert_{C^{1, \half + \gd}([a,T])} \leq N \qquad \text{and} \qquad \inf_{t \in [a,T]} \left| \gr'(t) \right| \geq \frac{1}{N},
  \]
  where $N = N(M,T,\gd,a) > 0$. When $\gd = \half$, the statement is also quantitative.
\end{corollary}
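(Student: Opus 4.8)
The plan is to recover $\gr$ on $[a,T]$ from the well-behaved curve $\gR(t) = \gr(t^2)$ on $[0,\sqrt{T}]$ furnished by Theorem~\ref{Thm:C^(1,delta)}, simply by undoing the reparametrization, i.e.\ composing with $t = \sqrt{s}$. For a fixed $a > 0$ the map $\gvp \colon [a,T] \to [\sqrt{a},\sqrt{T}]$, $\gvp(s) = \sqrt{s}$, is a $C^{\infty}$ diffeomorphism whose derivatives of all orders are bounded on $[a,T]$ by constants depending only on $a$ and $T$, and it is bi-Lipschitz since $\frac{1}{2\sqrt{T}} \leq \gvp'(s) \leq \frac{1}{2\sqrt{a}}$. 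From $\gr(s) = \gR(\gvp(s))$ and the chain rule,
\[
    \gr'(s) = \gR'(\sqrt{s}) \cdot \frac{1}{2\sqrt{s}} \qquad (a \leq s \leq T),
\]
which is continuous on $[a,T]$ because $\gR \in C^{1}$; this is of course nothing but formula (\ref{E:r'}), $\gr'(s) = \frac{i}{\sqrt{s}}e^{L(s)}$, read off directly.

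When $0 < \gd < \half$, Theorem~\ref{Thm:C^(1,delta)} says $\gR'$ is $\Lip(\half+\gd)$ on $[\sqrt{a},\sqrt{T}]$. A \Holder function of exponent $\half+\gd$ composed with the Lipschitz map $\gvp$ is again \Holder of exponent $\half+\gd$, with seminorm multiplied by $(\Lip \gvp)^{\half+\gd}$, a power of $a^{-1/2}$; so $\gR' \circ \gvp$ is $\Lip(\half+\gd)$ on $[a,T]$, and multiplying by the $C^{\infty}$ — in particular $\Lip(\half+\gd)$ — factor $s \mapsto \frac{1}{2\sqrt{s}}$ preserves this, since a product of bounded \Holder functions is \Holder. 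Hence $\gr \in C^{1,\half+\gd}([a,T])$, and carrying the constants through yields $\Vert \gr \Vert_{C^{1,\half+\gd}([a,T])} \leq N(M,T,\gd,a)$. The lower bound is immediate from the one in Theorem~\ref{Thm:C^(1,delta)}: $\left| \gr'(s) \right| = \left| \gR'(\sqrt{s}) \right| \cdot \frac{1}{2\sqrt{s}} \geq \frac{1}{N} \cdot \frac{1}{2\sqrt{T}}$. (Equivalently: by (\ref{I:L}), $L$ is bounded on $[0,T]$, and the proof of Theorem~\ref{Thm:C^(1,delta)} shows $s \mapsto L(s^2)$ is $\Lip(\half+\gd)$; composing with $\gvp$ gives $L \in \Lip(\half+\gd)$ on $[a,T]$, hence $e^{L(s)}$ and $s^{-1/2}$ are both $\Lip(\half+\gd)$ there, and so is their product.)

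When $\gd = \half$ the argument is the same, except $\gR'$ now obeys the log-Lipschitz estimate (\ref{I:GammaWeaklyLip}) with modulus of continuity $r \mapsto r\max\{1,\log\frac{1}{r}\}$ instead of an $\gve^{\half}$-\Holder estimate. This modulus is comparable to itself under bounded rescalings, $Cr\max\{1,\log\frac{1}{Cr}\} \leq N_{C}\, r\max\{1,\log\frac1r\}$ for $0 < r \leq T$, so composing $\gR'$ with the bi-Lipschitz $\gvp$ and then multiplying by the Lipschitz — hence log-Lipschitz, with an even smaller modulus — factor $\frac{1}{2\sqrt{s}}$ produces an estimate of the same shape for $\gr'$ on $[a,T]$:
\[
    \left| \gr'(s_1) - \gr'(s_2) \right| \leq N \left| s_1 - s_2 \right| \max\left\{ 1, \log \frac{1}{\left| s_1 - s_2 \right|} \right\}, \qquad N = N(M,T,a).
\]
Since $r\log\frac1r = o(r^{\ga})$ for every $\ga < 1$, this gives $\gr \in C^{1,\ga}([a,T])$ for all $\ga < 1$, i.e.\ $\gr$ is weakly $C^{1,1}$ on $[a,T]$, and the estimate is quantitative. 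I do not expect any real obstacle: this is a routine change of variables, made possible precisely because staying on $[a,T]$ keeps us away from the singular point $s = 0$, where $\gr(s) \sim 2i\sqrt{s}$ forces $\gr'$ to blow up. The only point needing a little care is the $\gd = \half$ case, namely checking that the log-Lipschitz modulus survives the bi-Lipschitz reparametrization and multiplication by a smooth factor — which reduces to the elementary comparability of $r\max\{1,\log\frac1r\}$ and $Cr\max\{1,\log\frac1{Cr}\}$ for $r$ in a bounded range.
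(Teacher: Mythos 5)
The paper states this corollary without proof, treating it as an immediate consequence of Theorem~\ref{Thm:C^(1,delta)}; your proposal correctly supplies the routine change-of-variables argument that is being left to the reader. Writing $\gr(s) = \gR(\sqrt{s})$ with $\gR(t) = \gr(t^2)$ and observing that $s \mapsto \sqrt{s}$ is a bi-Lipschitz $C^{\infty}$ diffeomorphism of $[a,T]$ onto $[\sqrt{a},\sqrt{T}]$ (or, equivalently, starting from $\gr'(s) = \frac{i}{\sqrt{s}}e^{L(s)}$ and noting that $L$ inherits $\Lip(\half+\gd)$ on $[a,T]$ from $\Lip(\half+\gd)$ of $s \mapsto L(s^2)$) is exactly the right step, the constants visibly pick up the dependence on $a$, and your verification that the log-Lipschitz modulus $r \mapsto r\max\{1,\log\frac{1}{r}\}$ is stable under bi-Lipschitz reparametrization and multiplication by a smooth bounded factor correctly handles the $\gd = \half$ case. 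No gap.
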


\section{When $\gl \in C^{1, \half + \gd}$ with $0 < \gd \leq \half$} \label{S:C^(1,1/2+delta)}

In this section, $\gl \colon [0,T] \to \bbR$ satisfies the \MTnAlphaCondition\ for $n=1$ and $\ga = \half + \gd$, where
$0 < \gd \leq \half$. That is to say, $\LipHalfNorm \leq 1$ and
\[
    \Vert \gl \Vert_{C^{1,\half + \gd}} =
    \sup_{t \in [0,T]} \left| \gl(t) \right| + \sup_{t \in [0,T]} \left| \gl'(t) \right| +
    \sup_{t_1 \not= t_2 \in [0,T]} \frac{\left| \gl'(t_1) - \gl'(t_2) \right|}{\left| t_1 - t_2 \right|^{\half + \gd}} \leq M.
\]
Our goal is to show $\gr \in C^{2, \gd}$ on $[a,T]$ for every $a > 0$, which is equivalent to proving $L \in C^{1,\gd}$ on the
same interval.

Since $\gl \in C^{1,\half + \gd}$, it is in particular $C^{1,\half}$ and we know from Lemma~\ref{L:w(s,u,epsilon)} that $L^{\gl}(s)$
is weakly Lipschitz on $[a,T]$ for every $a > 0$. We claim that $L(s)$ is differentiable on $(0,T]$ and $L'(s) \in \Lip(\gd,[a,T])$.
By (\ref{E:deltaL}), at least formally one has
\begin{equation} \label{E:L'}
    \begin{aligned}
        L'(s) &= \frac{1}{2s} + \frac{2}{\gr(s)^2} + \int_0^s \partial_s \left[ \frac{2}{\gr(s-u,s)^2} \right] \, du \\
                &= \frac{1}{2s} + \frac{2}{\gr(s)^2} - 4 \int_0^s \frac{\partial_s \gr(s-u,s)}{\gr(s-u,s)^3} \, du.
    \end{aligned}            
\end{equation}
To see that this formula is valid, we must show that $\partial_s \left[ \frac{2}{\gr(s-u,s)^2} \right] $ is integrable over $u \in [0,s]$.

\begin{lemma} \label{L:C^(1,alpha)}
  Let $\gl \colon [0,T] \to \bbR$ satisfy the \MTnAlphaCondition\ with $n = 1$ and  $0 < \ga \leq 1$. For any $0 < u \leq s < s+\gve
  \leq T$, we have
  \begin{align}
        \left| \gr(s+\gve-u,s+\gve) - \gr(s-u,s) \right| &\leq c M \min \left( u \gve^{\ga}, \gve u^{\ga} \right) \label{I:delta_r}\\
        \left| \partial_s \gr(s-u,s) \right| &\leq cM u^{\ga} \label{I:|dr/ds|} \\
        \left| \partial_s \gr(s+\gve-u,s+\gve) - \partial_s \gr(s-u,s) \right| &\leq C \gve^{\ga} \label{I:delta_dr/ds},
  \end{align}  
  where $C = C(M,T) > 0$ and $c > 0$ is an absolute constant. When $\ga = \half + \gd$ with $0 < \gd \leq \half$, $L(s)$ is
  differentiable for $s \in (0,T]$ and $L'(s)$ is given by (\ref{E:L'}). Moreover, $L'(s) \in \Lip(\gd)$ on $[a,T]$ for every $a > 0$.
\end{lemma}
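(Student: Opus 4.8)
\textbf{Proof plan for Lemma~\ref{L:C^(1,alpha)}.}

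The plan is to prove the three displayed estimates \eqref{I:delta_r}, \eqref{I:|dr/ds|}, \eqref{I:delta_dr/ds} first, and then read off the differentiability of $L$ and the \Holder\ continuity of $L'$ from them. For \eqref{I:delta_r}, the key observation (already exploited in \S\ref{S:C^(1,delta)}) is that $\gr(s+\gve-u,s+\gve)$ and $\gr(s-u,s)$ are the tips of Loewner slits driven by $u \mapsto \gl(s+\gve-u\cdot)$-type reparametrizations; more precisely, by the stationary property they are tips of driving functions on $[0,u]$ whose sup-distance is exactly $\go(s,u,\gve)$ from \eqref{Def:w}. So Theorem~\ref{Thm:mod_cont} gives $\left| \gr(s+\gve-u,s+\gve) - \gr(s-u,s) \right| \leq c\,\go(s,u,\gve)$, and the bound $\go(s,u,\gve) \leq M \min(u\gve^{\ga}, \gve u^{\ga})$ is exactly the content of the first part of Lemma~\ref{L:w(s,u,epsilon)} (which only used $\gl \in C^{1,\ga}$, valid for all $0<\ga\le 1$). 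One must be slightly careful that $\LipHalfNorm \leq 1$ is preserved under these reparametrizations so that Theorem~\ref{Thm:mod_cont} applies; this is the scaling/stationary bookkeeping and is routine.

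For \eqref{I:|dr/ds|} I would differentiate the integral representation \eqref{E:r} for the time-shifted slit: writing $\gr(s-u,s) = \gr^{\gl_{s-u}}(u)$ for the slit driven by the time shift $\gl_{s-u}$, formula \eqref{E:r} expresses this as $\gl(s) - \int_0^u \frac{2}{\gr(s-u,s-u+w)}\,dw$ after a change of variables, and differentiating in $s$ produces $\gl'(s)$ plus an integral term that is controlled by the same $\go$-type quantity, giving the $u^{\ga}$ rate. Alternatively, and perhaps cleaner, estimate directly: $\partial_s \gr(s-u,s) = \lim_{h\to 0} h^{-1}[\gr(s-u+h, s+h) - \gr(s-u,s)]$ ... wait, that is not quite the right difference quotient; rather $\partial_s\gr(s-u,s)$ fixes the first slot's offset $u$ and varies $s$, so it equals $\lim_{h\to0} h^{-1}[\gr(s+h-u,s+h)-\gr(s-u,s)]$, which by \eqref{I:delta_r} with $\gve=h$ is $O(M u^{\ga})$ — indeed $\min(uh^{\ga}, h u^{\ga})/h \to u^{\ga}$ as $h\to 0$, so \eqref{I:|dr/ds|} is an immediate corollary of \eqref{I:delta_r}. (Existence of the derivative itself follows because the difference quotient is Cauchy, again from \eqref{I:delta_r}.) For \eqref{I:delta_dr/ds}, I would similarly write both $\partial_s$-derivatives as limits of difference quotients and estimate $\left|[\gr(s+\gve+h-u,s+\gve+h) - \gr(s+\gve-u,s+\gve)] - [\gr(s+h-u,s+h) - \gr(s-u,s)]\right|$; this is a ``second difference'' of the tip map, and the natural route is to bound the corresponding second difference of driving functions in sup norm and apply Theorem~\ref{Thm:mod_cont} once more, the $C^{1,\ga}$ regularity of $\gl$ producing the $\gve^{\ga}$ rate.

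Finally, with \eqref{I:|dr/ds|} in hand and the lower bound $|\gr(s-u,s)| = \sqrt{u}\,|\tau(s-u,s)| \gtrsim \sqrt u$ from Lemma~\ref{L:diamE}, the integrand $\partial_s[2\gr(s-u,s)^{-2}] = -4\,\partial_s\gr(s-u,s)/\gr(s-u,s)^3$ is bounded by $cM u^{\ga}/u^{3/2} = cM u^{\ga - 3/2}$, which when $\ga = \half + \gd$ equals $cM u^{\gd - 1}$ — integrable on $[0,s]$. This justifies differentiating under the integral sign in \eqref{E:deltaL} and yields formula \eqref{E:L'}. For the $\Lip(\gd)$ bound on $L'$ over $[a,T]$: differencing \eqref{E:L'} at $s$ and $s+\gve$, the boundary-type terms $\frac{1}{2s} + \frac{2}{\gr(s)^2}$ are $\Lip(\gd)$ (actually Lipschitz, using $a>0$ and the smoothness of $\gr$ already established in the Corollary to Theorem~\ref{Thm:C^(1,delta)}), and the integral term is controlled by splitting $\int_0^s$ as $\int_0^\gve + \int_\gve^s$: on $[0,\gve]$ one bounds each piece crudely by $cM u^{\gd-1}$ giving $O(\gve^{\gd})$, and on $[\gve,s]$ one uses the differences \eqref{I:delta_r} and \eqref{I:delta_dr/ds} together with the denominator bound to get a kernel of size $\lesssim M\min(\gve u^{\ga-1}, \text{stuff})/u^{3/2}$ integrating to $O(\gve^{\gd})$ — the same split-at-$\gve$ scheme as in Lemma~\ref{L:w(s,u,epsilon)}. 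The main obstacle is \eqref{I:delta_dr/ds}: expressing the mixed second difference of the tip map as a genuine sup-norm difference of two driving functions (so that Theorem~\ref{Thm:mod_cont} can be applied cleanly) requires the right amount of care with the stationary property and with which variables are held fixed, and extracting the sharp $\gve^{\ga}$ rate — rather than a lossy $\gve^{\ga/2}$ — from the $C^{1,\ga}$ hypothesis on $\gl$ is where the bookkeeping is most delicate.
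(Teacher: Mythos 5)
Your treatment of \eqref{I:delta_r}, of \eqref{I:|dr/ds|} as a limit of the difference quotients controlled by \eqref{I:delta_r}, and of the differentiability of $L$ by dominated convergence (via the $u^{\ga-3/2}$ majorant, integrable because $\ga > \half$) all match the paper. The split-at-$\gve$ outline for $L' \in \Lip(\gd)$ is also in the right spirit — in the paper that estimate is actually carried out in the proof of Theorem~\ref{Thm:C^(1,1/2+delta)}, on the kernel $K(x,y)=x/y^3$, rather than inside the proof of this lemma, but the mechanism is the same.

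The genuine gap is \eqref{I:delta_dr/ds}, and you sensed it yourself. Your plan is to bound the ``second difference of the tip map'' by applying Theorem~\ref{Thm:mod_cont}. But that theorem is a first-order Lipschitz estimate: it controls a single difference $\gr^{\gl_1}-\gr^{\gl_2}$ by $\Vert\gl_1-\gl_2\Vert_\infty$. A Lipschitz bound on a map gives no control whatsoever on its second differences (compare $x\mapsto|x|$), so there is no way to ``apply it once more'' to bound
\[
\bigl[\gr(s+\gve+h-u,s+\gve+h)-\gr(s+h-u,s+h)\bigr]-\bigl[\gr(s+\gve-u,s+\gve)-\gr(s-u,s)\bigr],
\]
and what you would need is a second-order (quantitative differentiability) statement about $\gl\mapsto\gr^{\gl}$ that the paper never proves and that is much more than Theorem~\ref{Thm:mod_cont}. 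The paper circumvents this entirely. Differentiating the identity $\gr(s-u,s)=g_{s-u}(\gr(s))-\gl(s-u)$ in $s$ with $u$ held fixed, and reading $\partial_t g_t(\gr(s))\big|_{t=s-u}$ off the Loewner ODE, gives the closed formula
\[
\partial_s\gr(s-u,s)=\frac{2}{\gr(s-u,s)}+\gr'_{s-u}(s)-\gl'(s-u).
\]
Each term is then shown separately to be $\Lip(\ga)$ in $s$: the last by the $C^{1,\ga}$ hypothesis; the first by \eqref{I:delta_r} together with $|\gr(s-u,s)|\gtrsim\sqrt u$ (the $u$'s cancel exactly); and the middle by writing $\gr'_{s-u}(s)=\tfrac{i}{\sqrt u}\,e^{L(s-u,s)}$ (Corollary~\ref{Cor:r'} applied to the time shift) and bounding $|L(s+\gve-u,s+\gve)-L(s-u,s)|\leq cM\sqrt u\,\gve^{\ga}$, so the $\sqrt u$ cancels the prefactor $1/\sqrt u$ and leaves $C\gve^{\ga}$. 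This decomposition — not a second difference of the tip map — is the missing idea, and it is what delivers the sharp $\gve^{\ga}$ rate that your bookkeeping was struggling to extract.
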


\begin{proof}
  By Theorem~\ref{Thm:mod_cont}, $\left| \gr(s+\gve-u,s+\gve) - \gr(s-u,s) \right| \leq c \, \go(s, u, \gve)$. Inequality
  (\ref{I:delta_r}) follows immediately from Lemma~\ref{L:w(s,u,epsilon)}, and it implies
  \[
      \left| \frac{\gr(s+\gve-u,s+\gve) - \gr(s-u,s)}{\gve} \right| \leq c M u^{\ga}.
  \]
  Letting $\gve \to 0$ gives (\ref{I:|dr/ds|}). To prove (\ref{I:delta_dr/ds}), we differentiate $\gr(s-u,s) = g_{s-u}(\gr(s)) - \gl(s-u)$:
  \[
      \begin{aligned}
          \partial_s \gr(s-u,s) &= \frac{2}{g_{s-u}(\gr(s)) - \gl(s-u)} + g'_{s-u}(\gr(s)) \, \gr'(s) - \gl'(s-u) \\
                                             &= \frac{2}{\gr(s-u,s)} + \gr'_{s-u}(s) - \gl'(s-u)
      \end{aligned}   
  \]
  The last term $\gl'(s-u)$ is $\Lip(\ga)$ in $s$ by assumption. The term $\frac{2}{\gr(s-u,s)}$ is also $\Lip(\ga)$ in $s$
  by (\ref{I:delta_r}):
  \[
      \begin{aligned}
          \left| \frac{2}{\gr(s+\gve-u,s+\gve)} - \frac{2}{\gr(s-u,s)} \right| &=
          \frac{2 \left| \gr(s+\gve-u,s+\gve) -  \gr(s-u,s) \right|}{\left| \gr(s+\gve-u,s+\gve) \right| \cdot \left| \gr(s-u,s) \right|} \\
          &\leq \frac{c M u \, \gve^{\ga}}{\sqrt{u} \cdot \sqrt{u}} \\ &= c M \gve^{\ga}.
      \end{aligned}
  \]  
  The remaining term $\gr'_{s-u}(s)$ is given by $\gr'_{s-u}(s) = \frac{i}{\sqrt{u}} \exp L(s-u,s)$, where
  \[
      L^{\gl}(s-u,s) = L(s-u,s) := \int_0^u \left[ \frac{1}{2v} + \frac{2}{\gr(s-v,s)^2} \right] dv. 
  \]
  Note that $L^{\gl}(s-u,s) = L^{\widetilde{\gl}}(u)$, where $\widetilde{\gl}(t) = \gl(s-u+t)$ is a time shift of $\gl$. From
  Lemma~\ref{L:|L(s)|} we know that $\left| L(s-u,s) \right| \leq c M \sqrt{u} \leq c M T$. On the ball $\{ z \in \bbC \colon
  \left| z \right| \leq cMT \}$ the function $e^z$ has bounded derivative, so
  \[
      \begin{aligned}
           \left| \gr'_{s+\gve-u}(s+\gve) - \gr'_{s-u}(s) \right| &= \frac{1}{\sqrt{u}}
           \left| e^{L(s+\gve-u,s+\gve)} - e^{L(s-u,s)} \right| \\
           &\leq \frac{C}{\sqrt{u}}
           \left| L(s+\gve-u,s+\gve) - L(s-u,s) \right|
      \end{aligned}
  \]
  where $C = e^{cMT}$. On the other hand,
  \[
      \begin{aligned}
          \left| L(s+\gve-u, s+\gve) - L(s-u,s) \right| &\leq \int_0^u \left| \frac{2}{\gr(s+\gve-v,s+\gve)^2}
          - \frac{2}{\gr(s-v,s)^2} \right| \, dv \\
          &\leq c \int_0^u v^{-\frac{3}{2}} \go(s, v, \gve) \, dv \\
          &\leq c \int_0^u v^{-\frac{3}{2}} M v \gve^{\ga} \, dv \\
          &\leq c M \sqrt{u} \, \gve^{\ga}
      \end{aligned}
  \]
  These imply $\left| \gr'_{s+\gve-u}(s+\gve) - \gr'_{s-u}(s) \right| \leq C \gve^{\ga}$ with $C = C(M,T) > 0$, and (\ref{I:delta_dr/ds})
  holds.
  
  If $\half < \ga \leq 1$, we will show $L(s)$ is differentiable on $(0,T]$ and $L'(s)$ is given by (\ref{E:L'}). By extending $\gl$, we
  can assume without loss of generality that $0 < s < T$. For small $\gve > 0$,
  \[
      \begin{aligned}
          \frac{L(s+\gve) - L(s)}{\gve} = &\frac{1}{\gve} \int_s^{s+\gve} \left[ \frac{1}{2u} + \frac{2}{\gr(s+\gve-u,s+\gve)^2} \right] du + \\
          &\int_0^s \frac{1}{\gve} \left[ \frac{2}{\gr(s+\gve-u,s+\gve)^2} - \frac{2}{\gr(s-u,s)^2} \right] du
      \end{aligned}    
  \]
  It is not hard to see that the integrand in the last term is dominated by $C u^{\ga - \frac{3}{2}}$, which is integrable
  since $\ga > \half$. By Lebesgue dominated convergence theorem, the second integral
  converges as $\gve \downarrow 0$. Convergence of the first integral follows from continuity and does not require $\ga > \half$. Since
  $L(s)$ has a continuous right derivative, it is differentiable on $(0,T)$.
\end{proof}

We now prove the main result in this section.

\begin{theorem} \label{Thm:C^(1,1/2+delta)}
  Suppose $\gl \colon [0,T] \to \bbR$ satisfies the \MTnAlphaCondition\ with $n=1$, $\ga = \half + \gd$ and $0 < \gd \leq \half$.
  Then the slit $\gr(t) = \gr^{\gl}(t)$ is $C^{2, \gd}$ regular on $[a,T]$ for every $a > 0$. The
  statement is quantitative in the sense that
  \[
      \Vert \gr \Vert_{C^{2, \gd}([a,T])} \leq N \qquad \text{and} \qquad \inf_{t \in [a,T]} \left| \gr'(t) \right| \geq \frac{1}{N},
  \]
  where $N = N(M,T,\gd,a) > 0$ depends only on $M$, $T$, $\gd$ and $a$.
\end{theorem}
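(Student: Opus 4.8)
The plan is to obtain $\gr''$ by differentiating the identity $\gr'(s)=\frac{i}{\sqrt{s}}\,e^{L(s)}$ of Corollary~\ref{Cor:r'} once more, with $L'$ supplied by Lemma~\ref{L:C^(1,alpha)}. Applied with $n=1$ and $\ga=\half+\gd$, that lemma says $L$ is differentiable on $(0,T]$, its derivative is the expression in (\ref{E:L'}), and $L'\in\Lip(\gd)$ on $[a,T]$ for every $a>0$; in particular $L'$ is continuous on $(0,T]$. A further differentiation gives
\[
    \gr''(s)=\frac{i}{\sqrt{s}}\,e^{L(s)}\Bigl(L'(s)-\tfrac{1}{2s}\Bigr)=\gr'(s)\Bigl(L'(s)-\tfrac{1}{2s}\Bigr)\qquad(0<s\le T),
\]
so $\gr\in C^{2}$ on $(0,T]$. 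It then remains to show that, on a fixed interval $[a,T]$, each factor on the right is bounded and lies in $\Lip(\gd)$, and to collect the quantitative bounds.

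For the factor $\gr'$: inequality (\ref{I:L}) gives $|L(s)|\le cM\sqrt{s}$, so $|\gr'(s)|=s^{-1/2}e^{\Re L(s)}$ lies between $T^{-1/2}e^{-cM\sqrt{T}}$ and $a^{-1/2}e^{cM\sqrt{T}}$ on $[a,T]$; in particular $\inf_{[a,T]}|\gr'|\ge 1/N$, and since $\gr'$ is itself $C^{1}$ on $(0,T]$ it is Lipschitz, hence in $\Lip(\gd)$, on $[a,T]$, with constants depending only on $M,T,a$. For the factor $L'-\tfrac{1}{2s}$: by (\ref{E:L'}),
\[
    L'(s)-\frac{1}{2s}=\frac{2}{\gr(s)^{2}}-4\int_{0}^{s}\frac{\partial_{s}\gr(s-u,s)}{\gr(s-u,s)^{3}}\,du,
\]
which is $\Lip(\gd)$ on $[a,T]$ because $L'$ is (Lemma~\ref{L:C^(1,alpha)}) and $\tfrac{1}{2s}$ is smooth there, and bounded on $[a,T]$ because $|\gr(s)|\ge\Im\gr(s)\ge c\sqrt{a}$ (scaling property and compactness of $E_{1}$ in $\bbH$, as in Lemma~\ref{L:diamE}, $c>0$ absolute) while, using (\ref{I:|dr/ds|}) together with $|\gr(s-u,s)|=\sqrt{u}\,|\tau(s-u,s)|\ge c\sqrt{u}$,
\[
    \left|\int_{0}^{s}\frac{\partial_{s}\gr(s-u,s)}{\gr(s-u,s)^{3}}\,du\right|\le cM\int_{0}^{s}u^{\gd-1}\,du\le\frac{cMT^{\gd}}{\gd}.
\]
A product of a bounded Lipschitz function with a bounded $\Lip(\gd)$ function is again $\Lip(\gd)$ (the usual product estimate for \Holder\ seminorms), so $\gr''=\gr'\cdot(L'-\tfrac{1}{2s})\in\Lip(\gd)$ on $[a,T]$; combined with the sup bounds on $\gr$, $\gr'$, $\gr''$ over $[a,T]$ this gives $\Vert\gr\Vert_{C^{2,\gd}([a,T])}\le N(M,T,\gd,a)$, which is the theorem.

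The one step that needs genuine work is making the $\Lip(\gd)$ assertion of Lemma~\ref{L:C^(1,alpha)} quantitative, i.e.\ producing an explicit bound for the $\Lip(\gd)$-seminorm of $L'$ over $[a,T]$. Going back into that proof: by (\ref{E:L'}) the increment $L'(s+\gve)-L'(s)$ is the sum of the trivial increments of $\tfrac{1}{2s}$ and $\tfrac{2}{\gr(s)^{2}}$ on $[a,T]$ and of the increment of $I(s):=\int_{0}^{s}\partial_{s}\bigl[\tfrac{2}{\gr(s-u,s)^{2}}\bigr]\,du$; writing $I(s+\gve)-I(s)$ as a boundary integral over $[s,s+\gve]$ (harmless, since $u\ge a$ there) plus $\int_{0}^{s}$ of the difference of integrands, one controls the numerator difference $\partial_{s}\gr(s+\gve-u,s+\gve)-\partial_{s}\gr(s-u,s)$ by $c\min\bigl(u^{\half+\gd},\gve^{\half+\gd}\bigr)$ — the $u^{\half+\gd}$ alternative from (\ref{I:|dr/ds|}) when $u$ is near $0$, the $\gve^{\half+\gd}$ alternative from (\ref{I:delta_dr/ds}) — and the denominator difference by (\ref{I:delta_r}), so that after splitting the $u$-integral at $u=\gve$ and using $|\gr(s-u,s)|\ge c\sqrt{u}$,
\[
    \int_{0}^{s}\frac{\min\bigl(u^{\half+\gd},\gve^{\half+\gd}\bigr)}{u^{3/2}}\,du\le\frac{c\,\gve^{\gd}}{\gd}.
\]
This $\min$-splitting near the origin, together with the uniform lower bound $|\gr(s-u,s)|\ge c\sqrt{u}$ (valid since $\tau(s-u,s)=\gr(s-u,s)/\sqrt{u}\in E_{1}$, a compact subset of $\bbH$, by the stationary and scaling properties), is the heart of the matter; everything else is routine calculus on the compact interval $[a,T]$, which stays away from the singularity at $s=0$.
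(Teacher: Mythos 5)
Your proposal is correct and follows essentially the same route as the paper: differentiate $\gr'(s)=\frac{i}{\sqrt{s}}e^{L(s)}$ to get $\gr''=\gr'\cdot(L'-\frac{1}{2s})$, then reduce to a quantitative $\Lip(\gd)$ bound for the integral term in (\ref{E:L'}), obtained by splitting the $u$-integral at $u=\gve$ and invoking the pointwise bound (\ref{I:|dr/ds|}) for $u\le\gve$ and the increment bounds (\ref{I:delta_dr/ds}) and (\ref{I:delta_r}) for $u\ge\gve$, with $|\gr(s-u,s)|\asymp\sqrt{u}$ from the compactness of $E_1$. The only cosmetic difference is that the paper isolates $Q(s)=\int_0^s\partial_s\gr(s-u,s)/\gr(s-u,s)^3\,du$ and proves $Q\in\Lip(\gd,[0,T])$ directly, but since $L'(s)=\frac{1}{2s}+\frac{2}{\gr(s)^2}-4Q(s)$ this is exactly the quantitative $\Lip(\gd)$ estimate for $L'$ on $[a,T]$ that you describe as the one step needing genuine work.
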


\begin{proof}
  Any $C^{1,\half+\gd}$ driving function $\gl$ is in particular $C^{1,\half}$. Applying Theorem~\ref{Thm:C^(1,delta)},
  we know that $\gr$ is weakly $C^{1,1}$ regular on every $[a,T]$. All we need to show is that $\gr''$ exists and is $\Lip(\gd)$ on $[a,T]$.
  
  By Lemma~\ref{L:C^(1,alpha)}, $L$ is differentiable and therefore $\gr''$ exists on $(0,T]$ and is given by
  \begin{equation} \label{E:r''}
      \gr''(s) = \frac{2 \gr'(s)}{\gr(s)^2} - 4 \gr'(s) Q(s),
  \end{equation}
  where
  \[
      Q(s) := \int_0^s \frac{\partial_s \gr(s-u,s)}{\gr(s-u,s)^3} \, du.
  \]
  The first term $\frac{2 \gr'(s)}{\gr(s)^2}$ in (\ref{E:r''}) is $\Lip(\gd)$ on $[a,T]$ because both $\gr'$ and $\gr$ are
  $\Lip(\gd)$ and the size of the denominator $\left| \gr(s) \right|^2 \asymp s$ is bounded below by positive constant.   
  It remains to prove $Q \in \Lip(\gd,[0,T])$. The integral kernel of $Q$ has the form $K(x,y) = \frac{x}{y^3}$. We have
  \begin{equation} \label{I:delta_Q}
      \begin{aligned}
          \left| Q(s+\gve) - Q(s) \right| \leq &\int_0^s \left| K(x+\gD x, y + \gD y) - K(x,y) \right| \, du + \\
                                                                   &\int_s^{s+\gve} \left| K(x+\gD x, y+\gD y) \right| \, du,
      \end{aligned}                                                             
  \end{equation}
  where $x = \partial_s \gr(s-u,s)$, $y = \gr(s-u,s)$, $\gD x = \partial_s \gr(s+\gve-u,s+\gve) - \partial_s \gr(s-u,s)$ and
  $\gD y = \gr(s+\gve-u,s+\gve) - \gr(s-u,s)$. By Lemma~\ref{L:C^(1,alpha)} and scaling,
  \[
      \left| K(x+\gD x, y+\gD y) \right| \leq \frac{c M u^{\half + \gd}}{u^{\frac{3}{2}}} = c M u^{- 1 + \gd}
  \]
  and the last term in (\ref{I:delta_Q}) is bounded by
  \[
      \int_s^{s+\gve} \left| K(x+\gD x, y+\gD y) \right| \, du \leq \frac{cM}{\gd} \left[ (s+\gve)^{\gd} - s^{\gd} \right]
      \leq \frac{c M}{\gd} \gve^{\gd},
  \]
  whenever $0 \leq s < s+\gve \leq T$. On the other hand, we split the first integral in (\ref{I:delta_Q}) into two terms
  and handle them separately. If $s \leq \gve$, by triangle inequality and Lemma~\ref{L:C^(1,alpha)},
  \[
      \left| K(x+\gD x, y + \gD y) - K(x,y) \right| \leq \left| K(x+\gD x, y + \gD y) \right| + \left| K(x,y) \right| \leq
      c M u^{-1 + \gd}.
  \]
  Integrating gives
  \[
      \int_0^{\gve} \left| K(x+\gD x, y + \gD y) - K(x,y) \right| \, du \leq \frac{c M}{\gd} \gve^{\gd}.
  \]
  If $s \geq \gve$, we still need to estimate the integral from $\gve$ to $s$.
  \[
      \begin{aligned}
          &\left| K(x+\gD x, y + \gD y) - K(x, y) \right| \\
          \leq &\left| K(x+\gD x, y) - K(x, y) \right| + \left| K(x+\gD x, y + \gD y) - K(x+\gD x, y) \right| \\
          \leq &\left| \gD x \right| \cdot \sup \left| \partial_x K \right| + \left| \gD y \right| \cdot \sup \left| \partial_y K \right| \\
          \leq &C \gve^{\half + \gd} u ^{-\frac{3}{2}} + C \gve u^{-1 + 2\gd}
      \end{aligned}
  \]
  by Lemma~\ref{L:C^(1,alpha)} again. Integrating gives
  \[
      \int_{\gve}^s \left| K(x+\gD x, y + \gD y) - K(x,y) \right| \, du \leq C \gve^{\gd}
  \]
  with $C = C(M, T, \gd) > 0$.
\end{proof}

For $\gl(t) \in C^{1, \half + \gd}$ with $0 < \gd \leq \half$, Theorem~\ref{Thm:C^(1,1/2+delta)} shows that $\gr(t)$
is $C^{2,\gd}([a,T])$ for every $a > 0$. Certainly $\Vert \gr \Vert_{C^1([a,T])} \to \infty$ as $a \downarrow 0$.
To obtain smoothness up to $t = 0$, one has to reparametrize the slit, and a natural candidate is $\gR(t) = \gr(t^2)$.
We do not know whether $\gR(t)$ is $C^{2,\gd}$ up to $t = 0$, but assuming this we have a quadratic approximation
\begin{equation} \label{E:r(t^2)QuadApprox}
    \gR(t) = \gr(t^2) = 2it + \frac{2}{3} \gl'(0) t^2 + O(t^{2+\gd})
\end{equation}
as $t \to 0$. The heuristic reason is that on a small interval close to the origin any $C^{1,\ga}$ driving function $\gl(t)$
can be approximated by a fixed linear function $\gl'(0)t$. For driving functions of the form $\gl(t) = at$ ($a > 0$) the
quadratic approximation of $\gr(t^2)$ can be explicitly computed.

\begin{example}
  Let $\gl(t) = t$. Since a linear function is invariant under time shift, $\gr(s-u, s) = \gr(u)$ does
  not depend on $s$, and we have
  \[
      L(s) = \int_0^s \left[ \frac{1}{2u} + \frac{2}{\gr(u)^2} \right] du \qquad \text{and} \qquad L'(s) = \frac{1}{2s} + \frac{2}{\gr(s)^2}.
  \]
  For this case it is possible to explicitly compute the series expansion of $L(s)$ near $s = 0$. The reader may refer to
  \cite{KNK} for the computation. As $s \to 0$, one has $\gr(s) = 2i\sqrt{s} + \frac{2}{3}s + O(s^{\frac{3}{2}})$
  and $L'(s) = - \frac{i}{3 \sqrt{s}} + O(1)$. Note that $[L(s^2)]' = 2sL'(s^2) \to -\frac{2i}{3}$. The function $s \mapsto L(s^2)$
  is $C^1$ up to $s = 0$. Since $\gR'(s) = 2i \exp L(s^2)$, the curve $\gR(s) = \gr(s^2)$ is $C^2$ up to $s = 0$ and has
  a quadratic approximation $\gR(s) = 2is + \frac{2}{3} s^2 + O(s^3)$. Note that this agrees with (\ref{E:r(t^2)QuadApprox}).
\end{example}

For any constant $b > 0$, the driving function $\gl_b(t) = bt$ can be obtained from $\gl_1(t) = t$ and a Brownian
scaling: $\gl_b(t) = \frac{1}{b} \gl_1(b^2 t)$. The computation in the above example gives
\[
    \gr^{\gl_b}(s) = \frac{1}{b} \gr^{\gl_1}(b^2 s) = 2i \sqrt{s} + \frac{2b}{3} s + O(s^{\frac{3}{2}})
\]
as $s \to 0$. We have just verified (\ref{E:r(t^2)QuadApprox}) for all driving functions of the form $\gl_b(t) = bt$ with $b \in \bbR$.
(The case $b < 0$ follows from symmetry.)

\begin{proposition} \label{Prop:Gamma''(0)}
  Suppose $\gl \colon [0,T] \to \bbR$ satisfies the \MTnAlphaCondition\ with $n=1$, $\ga = \half + \gd$ and $0 < \gd \leq \half$.
  Then $\gR(t) = \gr(t^2)$ is twice differentiable everywhere on $[0,\sqrt{T}]$ and $\gR''(0) = \frac{4}{3} \gl'(0)$.
\end{proposition}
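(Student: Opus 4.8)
The plan is to show that $\gR(t)=\gr(t^2)$ is twice differentiable at $t=0$ with the stated second derivative, by computing the limit $\lim_{t\downarrow 0}\frac{\gR'(t)-\gR'(0)}{t}$ directly from the formula $\gR'(t)=2ie^{L(t^2)}$ established in Corollary~\ref{Cor:r'} (valid since $\gl$ is in particular $\Lip(\half+\gd)$ with $\gd=\half$, hence satisfies the \MTdeltaCondition). Since $L(0)=0$ we have $\gR'(0)=2i$, so it suffices to prove $\lim_{t\downarrow0}\frac{e^{L(t^2)}-1}{t}=\frac{2}{3}\gl'(0)$, which by $e^z-1=z+O(z^2)$ and $|L(t^2)|\lesssim t$ (Lemma~\ref{L:|L(s)|} with $\gd=\half$) reduces to showing $\lim_{t\downarrow0}\frac{L(t^2)}{t}=\frac{2}{3}\gl'(0)$, i.e.\ $L(s)=\frac{2}{3}\gl'(0)\sqrt{s}+o(\sqrt{s})$ as $s\downarrow0$.

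First I would reduce to the case $\gl'(0)=0$ by subtracting the linear function $\ell(t):=\gl(0)+\gl'(0)t$: the worked example and the Brownian-scaling computation at the end of \S\ref{S:C^(1,1/2+delta)} already verify that the driving function $\ell(t)=\gl(0)+bt$ produces $\gr^{\ell}(s)=2i\sqrt s+\frac{2b}{3}s+O(s^{3/2})$, hence $L^{\ell}(s)=\frac{2b}{3}\sqrt s+o(\sqrt s)$ with $b=\gl'(0)$. So it remains to show that replacing $\gl$ by $\gl-\ell$ changes $L(s)$ by $o(\sqrt s)$; equivalently, if $\gl'(0)=0$ then $L(s)=o(\sqrt s)$. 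For this I would use the integral formula $L(s)=\int_0^s\bigl[\frac1{2u}+\frac{2}{\gr(s-u,s)^2}\bigr]du$ together with the basic estimate $\bigl|\half+\frac{2}{\tau(s-u,s)^2}\bigr|\leq c\,|\tau(s-u,s)-2i|\leq c\,\Vert\gl\Vert_{\Lip(\half,[s-u,s])}$ from Lemma~\ref{L:diamE}, which gives $|L(s)|\leq c\int_0^s \Vert\gl\Vert_{\Lip(\half,[s-u,s])}\,\frac{du}{u}$. When $\gl'(0)=0$ the mean value theorem and $\gl'\in C^{\half+\gd}$ near $0$ give $\Vert\gl\Vert_{\Lip(\half,[0,v])}\lesssim v^{\frac12+(\frac12+\gd)}=v^{1+\gd}$ for small $v$ (in fact $|\gl(a)-\gl(b)|=|\int_b^a\gl'|\leq|a-b|\sup_{[0,v]}|\gl'|\lesssim|a-b|\,v^{\frac12+\gd}$); substituting $v=s$ yields $|L(s)|\lesssim s^{\frac12+\gd}\cdot\int_0^s u^{-1}\,du$, which is not quite integrable, so I would instead split the $u$-integral at $u=s/2$ and use on the inner piece $\Vert\gl\Vert_{\Lip(\half,[s-u,s])}\leq 2u^{1/2}\sup_{[0,s]}|\gl'|\lesssim u^{1/2}s^{1/2+\gd}$ — wait, more carefully: use the Lipschitz-in-$t$ bound $\Vert\gl\Vert_{\Lip(\half,[a,b])}\lesssim (b-a)^{1/2}\sup_{[a,b]}|\gl'|$ and $\sup_{[s-u,s]}|\gl'|\lesssim s^{1/2+\gd}$, giving integrand $\lesssim u^{-1/2}s^{1/2+\gd}$, whose integral over $(0,s)$ is $\lesssim s^{1+\gd}=o(\sqrt s)$.

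The main obstacle is handling the non-absolutely-convergent-looking logarithmic integral $\int_0^s\frac{du}{u}$ cleanly: one must exploit the extra $u^{1/2}$ decay coming from the fact that $\tau(s-u,s)-2i$ is controlled by $\Vert\gl\Vert_{\Lip(\half,[s-u,s])}$ rather than by $\osc(\gl)$ on $[s-u,s]$, and for the reduction to $\gl'(0)=0$ one needs the stability of $L$ under adding a linear function — which is exactly where Theorem~\ref{Thm:mod_cont} (Lipschitz continuity of $\gl\mapsto\gr^{\gl}$) enters, since subtracting $\ell$ perturbs each $\gr(s-u,s)$ by at most $c\,\Vert(\gl-\ell)-\gl\Vert_{\infty,[s-u,s]}=c\sup_{[s-u,s]}|\ell-\ell(0)|\lesssim u\,|\gl'(0)|$, and one checks this perturbation integrates against $u^{-3/2}$ to something $o(\sqrt s)$ after the correct splitting. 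Once $L(s)=\frac23\gl'(0)\sqrt s+o(\sqrt s)$ is in hand, substituting $s=t^2$ gives $L(t^2)=\frac23\gl'(0)t+o(t)$, hence $\frac{\gR'(t)-2i}{t}=2\cdot\frac{e^{L(t^2)}-1}{t}\to\frac43\gl'(0)$, and twice differentiability at the interior and right-endpoint points follows from Theorem~\ref{Thm:C^(1,1/2+delta)} together with the differentiability of $t\mapsto t^2$.
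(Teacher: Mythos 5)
Your overall plan — compute $\lim_{t\downarrow 0}L(t^2)/t$ by comparing $\gl$ with a linear driving function and invoking Theorem~\ref{Thm:mod_cont} — is in the same spirit as the paper. But the reduction step as you have set it up does not work, and the key estimate you assert is false.

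The problem is that $L$ is not additive in $\gl$, so ``reducing to $\gl'(0)=0$ by subtracting $\ell$'' cannot mean what you want it to mean. You state that it remains to show $|L^{\gl}(s)-L^{\gl-\ell}(s)|=o(\sqrt s)$, and that this is ``equivalently'' the statement that $\gl'(0)=0$ forces $L(s)=o(\sqrt s)$. These are not equivalent. Worse, if both held you would conclude $L^{\gl}(s)=o(\sqrt s)$ for \emph{every} $\gl$ and hence $\gR''(0)=0$ always, contradicting the linear example. And the asserted estimate itself fails: the sup-norm perturbation of the restricted, shifted driving function when you pass from $\gl$ to $\gl-\ell$ on $[s-u,s]$ is exactly of order $u\,|\gl'(0)|$, and
\[
  \int_0^s \frac{u\,|\gl'(0)|}{u^{3/2}}\,du \;=\; 2\,|\gl'(0)|\,\sqrt s,
\]
which is $O(\sqrt s)$ on the nose, not $o(\sqrt s)$; no splitting rescues this because the bound $u\,|\gl'(0)|$ is sharp. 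What you must instead estimate is $|L^{\gl}(s)-L^{\ell}(s)|$, comparing $\gl$ \emph{directly} to the linear function $\ell(t)=\gl(0)+\gl'(0)t$. On each window $[s-u,s]$ the relevant driving-function difference is $v\mapsto \gl(s-u+v)-\gl(s-u)-\gl'(0)v=\int_0^v[\gl'(s-u+w)-\gl'(0)]\,dw$, and now the $C^{1,\ga}$ hypothesis with $\ga=\tfrac12+\gd$ gives the much smaller bound $u\,M\,s^{\ga}$. Feeding this through Theorem~\ref{Thm:mod_cont} and the $u^{-3/2}$ kernel yields $|L^{\gl}(s)-L^{\ell}(s)|=O(s^{\half+\ga})=o(\sqrt s)$, which is the estimate the paper records (after $s\mapsto s^2$) as $O(s^{1+2\ga})$. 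With this in hand, your computation in the $\gl'(0)=0$ case is no longer needed; the linear example supplies the leading term directly.

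Two smaller points. Your target $L(s)=\frac23\gl'(0)\sqrt s+o(\sqrt s)$ has dropped a factor of $-i$: since $\gR'(t)=2ie^{L(t^2)}$ and the example gives $[L(s^2)]'\big|_{s=0}=-\frac{2i}{3}$, the correct asymptotic is $L(s)=-\frac{2i}{3}\gl'(0)\sqrt s+o(\sqrt s)$; your final line $\frac{\gR'(t)-2i}{t}=2\cdot\frac{e^{L(t^2)}-1}{t}$ drops another $i$, and the two slips cancel. Also, the identity $\Vert(\gl-\ell)-\gl\Vert_{\infty,[s-u,s]}=\sup_{[s-u,s]}|\ell-\ell(0)|$ is not right as written ($\Vert\ell\Vert_\infty$, not $\Vert\ell-\ell(0)\Vert_\infty$), though the quantity you actually want, namely the sup of $v\mapsto\ell(s-u+v)-\ell(s-u)=\gl'(0)v$ over $v\in[0,u]$, is indeed $u\,|\gl'(0)|$.
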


\begin{proof}
  We already know $\gR(t)$ is $C^2$ on $(0,\sqrt{T}]$ (Theorem~\ref{Thm:C^(1,1/2+delta)}) and still need to show the existence
  of $\gR''(0)$. By comparing $\gl(t)$ with the linear driving function $\widetilde{\gl}(t) = \gl'(0)t$, we will show that
  $s \mapsto L^{\gl}(s^2)$ is differentiable at $s = 0$. To simplify the notations, we write
  $\widetilde{L}(\cdot) = L^{\widetilde{\gl}}(\cdot)$ and $\widetilde{\tau}(\cdot, \cdot) = \tau^{\widetilde{\gl}}(\cdot, \cdot)$.
  Notice that
  \[
    \begin{aligned}
      \left| L(s^2) - \widetilde{L}(s^2) \right| &\leq \int_0^{s^2} \left| \frac{2}{\tau(s^2-u, s^2)^2} - \frac{2}{\widetilde{\tau}(s^2-u, s^2)^2} \right|
      \, \frac{du}{u} \\
      &\leq c \int_0^{s^2} \left| \tau(s^2-u,s^2) - \widetilde{\tau}(s^2-u,s^2) \right| \, \frac{du}{u}
    \end{aligned}  
  \]
  Using the condition $\gl \in C^{1,\half+\gd}$, we can estimate the $\Vert \cdot \Vert_{\infty}$ distance between the two
  driving functions which generate $\tau(s^2-u,s^2)$ and $\widetilde{\tau}(s^2-u,s^2)$. The Lipschitz continuity
  Theorem~\ref{Thm:mod_cont} will then imply
  \[
       \left| L(s^2) - \widetilde{L}(s^2) \right| = O(s^{1 + 2\ga}).
  \]
  For the purpose of computing $\lim_{s \to 0} \frac{L(s^2)}{s}$, we can replace $\gl(t)$ by $\gl'(0) t$ without
  affecting the existence of the limit and its value. Since we are able to compute this limit for linear driving functions,
  it follows that
  \[
      \left. \frac{d L(s^2)}{ds} \right\vert_{s=0}  =  \lim_{s \to 0} \frac{L(s^2)}{s} = \lim_{s \to 0} \frac{\widetilde{L}(s^2)}{s} = - \frac{2i}{3} \gl'(0).
  \]
  From the formula $\gR'(s) = 2i \exp L(s^2)$ and the above computation, we have $\gR''(0) = \frac{4}{3} \gl'(0)$.
\end{proof}

\end{document}